\documentclass[10pt,a4paper]{article}
\usepackage[utf8]{inputenc}
\usepackage{amssymb}
\usepackage{graphicx}
\usepackage{amsmath, amsthm}
\usepackage{amsfonts, dsfont}
\usepackage[comma]{natbib}
\usepackage{color}
\usepackage[dvipsnames]{xcolor}
\usepackage{comment}
\usepackage{mathtools}
\usepackage{subfigure}
\usepackage[ruled,vlined]{algorithm2e}
\mathtoolsset{showonlyrefs}
\usepackage{enumitem}
% \usepackage{showlabels}
% \usepackage[monochrome]{xcolor}
%%%%%%%%%%%%%%%%%%%%%%%%%%%%%%%
%%% Definition der Umgebungen %
%%%%%%%%%%%%%%%%%%%%%%%%%%%%%%%

\newtheorem{corr}{Corollary}%[section]
\newtheorem{remark}{Remark}%[section]
\newtheorem{lemma}{Lemma}
\newtheorem{prop}{Proposition}%[section]
\newtheorem{theorem}{Theorem}%[section]

% please place your own definitions here and don't use \def but
% \newcommand{}{}

\author{Julien Chevallier\footnote{Universit\'e Grenoble Alpes, LJK UMR-CNRS 5224, 
E-mail: julien.chevallier1@univ-grenoble-alpes.fr}, Anna Melnykova\footnote{Universit\'e de Cergy-Pontoise, AGM UMR-CNRS 8088, 
Universit\'e Grenoble Alpes, LJK UMR-CNRS 5224, 
E-mail: anna.melnykova@u-cergy.fr}, Irene Tubikanec\footnote{Institute for Stochastics, Johannes Kepler University Linz,
E-mail: irene.tubikanec@jku.at}}
% \title{Diffusion approximation of a Hawkes process with Erlang memory kernels: Theoretical and numerical analysis}
\title{Theoretical analysis and simulation methods for Hawkes processes and their diffusion approximation 
%Hawkes processes with Erlang memory kernels and their diffusion approximation: theoretical analysis and simulation methods
}
\date{}

%--------------------------------------------------------------------------------------

%--------------------------------------------------------------------------------------

\begin{document}
\maketitle 
\begin{abstract}
    Oscillatory systems of interacting Hawkes processes with Erlang memory kernels were introduced in \cite{Ditlevsen2017eva}. They are piecewise deterministic Markov processes (PDMP) and can be approximated by a stochastic diffusion. First, a strong error bound between the PDMP and the diffusion is proved. Second, moment bounds for the resulting diffusion are derived. Third, approximation schemes for the diffusion, based on the numerical splitting approach, are proposed. These schemes are proved to converge with mean-square order $1$ and to preserve the properties of the diffusion, in particular the hypoellipticity, the ergodicity and the moment bounds. Finally, the PDMP and the diffusion are compared through numerical experiments, where the PDMP is simulated with an adapted thinning procedure.
\end{abstract}

\textbf{Keywords:} Piecewise deterministic Markov processes, Hawkes processes, stochastic differential equations, diffusion processes, neuronal models, numerical splitting schemes

\textbf{Classification:} % 60H10 \Red{J: I am not sure it is relevant},
60H35, 65C20, 65C30, 60G55, 60J25
% \Red{J: 60G55 (Point processes) and 60J25 (Continuous-time Markov processes on general state spaces) could be cited}

\section*{Introduction} 

Fast and accurate simulation of a biological neuronal network is one of the most extensively studied problems in computational neuroscience. The general goal is to understand how information is processed and transmitted in the brain. %Some parts of this mechanism are well known: for example, it is known that the main information units in the brain are neurons, and that they transmit information to other neurons with the help of action potentials (also called ``spikes"). However, how exactly the spikes are generated and how exactly different types of neuronal cells are interacting is still a subject of discussion. 
One of the widely used approaches is to assume that the spike occurrences in a network are described by a point process. Poisson processes, as ``memory less" Markovian processes, can neither take into account a refractory period between two consecutive spikes nor the interaction between neurons, and are thus no proper candidates. Therefore, it is common to model the neuronal activity with Hawkes processes, which are self-exciting point processes with a memory \citep{chevallier2015microscopic,Chornoboy:88, Johnson:96, Pernice:11, Bouret:14}.
The price to pay for using Hawkes processes to model spiking activity is that their investigation is more difficult, since the Markovian theory cannot be directly applied. 

However, for a certain type of memory kernels, so-called Erlang kernels, the dynamics of the point process can be described by a piece-wise deterministic Markov process (PDMP), whose dimension is determined by the ``memory length" of the underlying Hawkes process \citep{Ditlevsen2017eva}. 
This PDMP, also called ``Markovian cascade of successive memory terms" in the literature, is a convenient framework to study the long-time behaviour of the particle system. In particular, it is proved that it is positive Harris recurrent
 and converges to its unique invariant measure exponentially fast in Wasserstein distance \cite[Theorems 1 and 2]{Duarte2019}.

This Markovian cascade and its associated point process can be simulated thanks to the thinning procedure \citep{ogata1981on}, which is a common way to simulate general point processes even without any Markovian assumption. The only requirement in order to apply this method is to provide an upper-bound for the spiking rate of the neurons, which is highly related to the model under consideration \citep{dassios2013exact,Duarte2019}. This procedure yields an exact simulation algorithm but is costly to compute, especially when the number of neurons is large. This results from the fact that the computation time scales linearly with the number of neurons.

In the brain, neurons are clustered in populations with similar behaviours (excitatory, inhibitory, etc). When the network size grows, but the proportion of neurons in each population remains constant, the Markovian cascade can be approximated by a stochastic differential equation (SDE) of the same dimension. In other words, the diffusion approximation theory allows to replace the stochastic term, described by jumps in the PDMP, by a multi-dimensional Brownian motion. 
Passing from a Hawkes process to a diffusion process  substantially simplifies the analysis of the system behaviour.
In particular, the simulation of the diffusion process is much less computationally expensive than that of the Markovian cascade, especially when the number of neurons is large. This results from the fact that the computational time for the SDE does not depend on the number of neurons.
%For instance, in order to give a numerical approximation of the invariant measure of the PDMP, it is more convenient to simulate the diffusion, provided that the diffusion approximation is precise enough and that a reliable numerical scheme for the diffusion can be derived. 
However, the SDE cannot be solved explicitly, and thus the construction of a reliable approximation scheme is required. 

Note that the main difficulty does not lie in the construction of convergent numerical schemes.
For example, standard methods such as the Euler-Maruyama or Milstein schemes converge in the mean-square sense when the time discretization step tends to zero. 
In practice, however, the solution is approximated with a strictly positive time step. As a consequence, even if the discrete solution is known to converge to the continuous process as the time step tends to zero, it does not imply that both processes share the same properties for a fixed discretization step. Thus, the approximation scheme should not be used to study the behaviour of the original model without further analysis of its qualitative properties. Constructing approximation schemes, which are not only convergent, but also preserve the properties of the model, constitutes the main difficulty.

In our case, the first challenge is that the diffusion term of the SDE is highly degenerate and that frequently applied numerical schemes, such as the Euler-Maruyama method, do not preserve the ``propagation of noise property" (formally known as \emph{hypoellipticity}). Second, standard integrators may also fail in preserving second moment properties (see \cite{Mattingly2002}), especially % and the ergodicity of the model, especially 
when the equation describes oscillatory dynamics, which is the case here. For example, \cite{Higham2004} prove that the Euler-Maruyama method does not preserve the second moment of linear stochastic oscillators. It is expected that this and similar negative results also extend to higher-dimensional and non-linear stochastic oscillators, see, e.g., \cite{Ableidinger2017}. 
Even if higher-order Taylor approximation schemes may solve the problem of degenerate noise structure, they got two major drawbacks. They highly depend on the dimension of the system (which is determined by a parameter in our model) and they commonly fail in preserving ergodic properties.

To overcome these problems, we construct numerical schemes based on the so-called splitting approach. This approach was first developed for ordinary differential equations (ODEs). We refer to \cite{Blanes2009} and \cite{Mclachlan2002} for an exhaustive discussion. 
% These results have already been partially \Red{(J: why partially? A: Double the question!)} carried over to SDEs, see, e.g.,
%In recent years, however, it has been successfully generalized to SDEs, 
For an extension to SDEs, see, e.g., \cite{Ableidinger2016,Ableidinger2017,Brehier2018,Leimkuhler2015,Leimkuhler2016,Milstein2004,Misawa2001,Petersen1998,Shardlow2003}.
The main idea of the numerical splitting approach is to decompose the system into explicitly solvable subequations and to find a proper composition of the derived explicit solutions. Such methods usually preserve the properties of the underlying model through the explicitly solved subparts.
% \Red{Splitting schemes are proven to be especially advantageous when applied to systems with oscillatory dynamics and degenerate diffusion coefficients. In particular, they often preserve the moment properties and ergodicity of the underlying model even if it is not the case with Euler-Maruyama scheme (see, e.g. \cite{Mattingly2002, Ableidinger2017}).}
% The only approximation of the approach is made through this composition hence such methods usually preserve the properties of the underlying model. In particular, the scheme is hypoelliptic, it can accurately reconstruct the moments and it is often ergodic (proved by establishing a discrete Lyapunov condition, see, e.g, \cite{Ableidinger2017,Mattingly2002}).

The main contributions of this work can be divided into three steps. First, a strong error bound between the Markovian cascade and the stochastic diffusion is proved. This complements the results presented in \cite{Ditlevsen2017eva, Locherbach2019}. Second, moment bounds of order one and two for the stochastic diffusion are derived. Third, simulation algorithms for the diffusion and the PDMP are provided. 
For the diffusion, two splitting schemes, based on the Lie-Trotter and the Strang approach (\cite{Mclachlan2002,Strang1968}), are proposed. 
%Then, we prove mean-square convergence with order one and show that the splitting schemes preserve the derived moment bounds and the ergodic properties of the process.
They are proved to converge with order one in the mean-square sense. Moreover, they are proved to preserve the ergodic property of the continuous process and to accurately reconstruct the moment bounds obtained in the second step.
The simulation method for the PDMP is exact and based on the thinning procedure. In order to apply this method, an explicit upper-bound and a sharper one, involving the numerical computation of polynomial roots, are obtained. Their performances, with respect to the parameters of the model, are discussed. %Finally, the theoretical results are illustrated thanks to a numerical study. 

This paper is organized as follows. In Section \ref{sec:model}, the finite particle system, the corresponding piece-wise deterministic Markov process and the main notations are introduced. Section \ref{sec:stochdiffusion} is devoted to the stochastic diffusion and to its properties. %, arising as a limit of the Markovian cascade (see \cite{Ditlevsen2017eva}). 
% We prove the strong error rate between the Markovian cascade and the diffusion and derive the moment bounds \Red{for the solution of this SDE}. 
% J: already written in the previous paragraph
Section \ref{sec:simulation_methods} presents the approximation schemes for the stochastic diffusion. %, based on the numerical splitting approach.
% prove \Red{their convergence} with  order one in mean-square sense and study its properties. In particular, we show that the splitting scheme preserves the moment bounds and the ergodicity of the continuous process obtained in Section \ref{sec:stochdiffusion}.
% J: already written in the previous paragraph
Section \ref{section:simulation_PDMP} describes the simulation algorithm for the PDMP. %, based on the thinning procedure. 
Finally, Section \ref{sec:simulation_study} provides a numerical study, illustrating the theoretical results.

% \newpage 

\section{Model and notations}\label{sec:model}

The system considered in this paper consists of several populations of neurons, each of them representing a different functional group of neurons (layers in the visual cortex, pools of excitatory and inhibitory neurons in a network, etc.). This system is described by a multivariate counting process, which counts the spike occurrences. In a certain setting, it can be approximated by a stochastic diffusion in the large population limit \citep{Ditlevsen2017eva}. The resulting diffusion is the subject of study in Section \ref{sec:stochdiffusion}.

\subsection{Finite particle system} 

Let us consider a network, consisting of $K$ large populations of neurons, where the number of neurons in the $k$-th population is denoted by $N_k$ and the total number of neurons in the network is $N = N_1 + \dots + N_K$. 
Let $Z^{k, n}_t$ represent the number of spikes of the $n$-th neuron belonging to the $k$-th population during the time interval $[0, t]$.  
The sequence of counting processes $\{(Z^{k, n}_t)_{t\geq 0} ,\, 1\leq k \leq K, \, 1\leq n \leq N_k \}$ is characterized by the intensity processes $(\lambda^{k, n}(t))_{t\geq 0}$, which are formally defined through the relation 
\[
\mathbb{P}(Z^{k, n}_t \text{ has a jump in } (t, t+ dt]|\mathcal{F}_t) = \lambda^{k, n}(t) dt,
\]
where $\mathcal{F}_t$ contains the information about the processes $(Z^{k, n}_t)_{t\geq 0}$ up to time $t$. The mean-field framework considered here corresponds to intensities $\lambda^{k, n}(t)$ given by
\begin{equation}\label{eq:intensity}
\lambda^{k, n}(t) = f_k\left(\sum_{l=1}^K \frac{1}{N_l} \sum_{1\leq m \leq N_l} \int_{(0, t)} h_{kl}(t-s) dZ^{l, m}_s \right),
\end{equation}
where $\{h_{kl}: \mathbb{R}_+ \to \mathbb{R} \}$ is a family of \textit{synaptic weight functions} (also called memory kernels), which model the influence of population $l$ on population $k$. The function $f_k:  \mathbb{R} \to \mathbb{R}_+ $ is the \textit{spiking rate function} of population $k$. 
The expression ``mean-field framework" refers to the fact that the intensity $\lambda^{k, n}(t)$ depends on the whole system only through the ``mean-field" behaviour of each population, namely $\frac{1}{N_l} \sum_{1\leq m \leq N_l} dZ^{l, m}_s$. Furthermore, as $N\to \infty$ we assume that $N_k/N\to p_k>0$ for all $k$. 

Throughout the paper we assume that the functions $f_k$ satisfy the following conditions:
\begin{enumerate}[label=\textbf{(A)}]
    \item  The spiking rate functions $f_k$ are positive, Lipschitz-continuous, non-decreasing and such that $0< f_k \leq f^{\max}_k \text{ for } k = 1,\dots,K$. \label{assumption:a}
    %\Red{J: is the "bounded from below" assumption really needed somewhere ? By the way, the exponential functions of the numerical study do not satisfy that assumption.}
\end{enumerate}

In this paper, Erlang-type memory kernels and a cyclic feedback system  of interactions are considered. This means that for each $k$, population $k$ is only influenced by population $k+1$, where we identify $K+1$ with $1$.
In this case, all the memory kernels are null except the ones given by
\begin{equation}\label{eq:memory_kernels}
h_{kk+1}(t) = c_k e^{-\nu_k t}\frac{t^{\eta_k}}{\eta_k!},
\end{equation}
where $c_k = \pm 1$. This constant determines whether the population has an \textit{inhibitory} ($c_k=-1$) or \textit{excitatory} ($c_k=+1$) effect.
The parameter $\eta_k\geq 1$ is an integer number, determining the memory order for the interaction function from population $k+1$ to population $k$. 

The parameters $\eta_k$ and $\nu_k$ determine, intuitively, 
the typical delay of interaction and its time width.
% "importance" of a certain time interval in the past. 
The delay of the influence of the population $k+1$ on population $k$ attains its maximum $\eta_{k+1}/\nu_{k+1}$ units back in time, and its mean is $(\eta_{k+1}+1)/\nu_{k+1}$. 
The larger is this ratio, the more ``old" events are important.
When the ratio is fixed (equal to $\tau$), but both $\eta_k$ and $\nu_k$ tend to infinity, then $h_{kk+1}$ tends to a Dirac mass in $\tau$.
% the distribution of the delay transforms to a Dirac mass. 
This means that only one specific moment in time is important. The interested reader is referred to \cite{Ditlevsen2017eva} and \cite{Locherbach2019} for more details.

In this paper we are interested in the processes $\{(\bar{X}^{k,1}_t)_{t\geq 0},\, 1\leq k\leq K\}$, which are the arguments of the function $f_k$ in Equation \eqref{eq:intensity} and are defined by
% defined as an integrated intensity: 
\begin{equation}\label{eq:integrated_intensity}
    \bar{X}^{k,1}_t = \frac{1}{N_{k+1}} \sum_{1\leq m \leq N_{k+1}} \int_{(0, t)} h_{kk+1}(t-s) dZ^{k+1, m}_s.
\end{equation}
When the memory kernels are given in form \eqref{eq:memory_kernels}, the processes defined in \eqref{eq:integrated_intensity} can be obtained as marginals of the process $(\bar{X}_t)_{t\geq 0} = \{ (\bar{X}^{k,j}_t)_{t\geq 0},\, 1\leq k\leq K, 1\leq j\leq \eta_k+1\}$ which solves the following system of dimension $\kappa = \sum_{k=1}^K (\eta_k+1)$:
\begin{equation}\label{eq:PDMP_cascade}
\begin{cases}
d\bar X^{k,j}_t = \left[-\nu_k \bar X^{k,j}_t+\bar X^{k,j+1}_t\right]dt, \: \text{for } j= 1,\dots,\eta_k, \\ 
d\bar X^{k,\eta_k+1}_t = -\nu_k \bar X^{k,\eta_k+1}_tdt+c_k d\bar{Z}^{k+1}_t,\\
\bar{X}_0 = x_0 \in \mathbb{R}^\kappa,
\end{cases}
\end{equation}
where $\bar{Z}^{k+1}_t = \frac{1}{N_{k+1}}\sum_{n=1}^{N_{k+1}} Z^{k+1,n}_t $, each $Z^{k+1,n}_t $ jumping at rate $f(\bar{X}^{k+1,1}_{t-})$, see \cite{Ditlevsen2017eva} for more insight.
This type of equation is called a Markovian cascade in the literature.

The process $(\bar{X}_t)_{t\geq 0}$ summarizes and averages the influence of the past events. This process, along with the firing rate functions $f_k$, determine the dynamics of $(Z^{k,n}_t)_{t\geq 0}$, described by its intensity \eqref{eq:intensity}. 

From a modelling point of view, the process $(\bar{X}^{k,1}_t)_{t\geq 0}$ can be roughly regarded as the voltage membrane potential of any neuron in population $k$. Then, the probability of a neuron to emit a spike is given as a function of its membrane potential.
To summarize, the processes, with coordinates $(k,1)$, defined by \eqref{eq:integrated_intensity}, 
% coordinates with indexes $k,1$ ($1\leq k\leq K$) 
describe the membrane potential in each population, whereas the other coordinates represent higher levels of memory for the process. %Parameter $\eta_k$ (also called the ``depth of the memory kernel") is thus regulating how deeply the memory is processing.

Note that the model presented so far starts with empty memory. The right-hand side of \eqref{eq:intensity} is equal to $f_k(0)$ at time $t=0$ or equivalently $x_0=0$. However, one could easily generalize this to any initial condition $x_0$ in $\mathbb{R}^\kappa$ as it is done in the rest of the paper. Moreover, the interested reader is referred to \cite{Duarte2019}, where a more general model is studied numerically and theoretically for $K=1$ population.

\subsection{Notations}
Now we focus on the case of two interacting populations of neurons ($K = 2$), consisting of $N_1$ and $N_2$ neurons, respectively. Taking $K=2$ allows for an investigation of the interactions between the populations of different sizes while avoiding heavy notations. Throughout the paper the following notation is used: $\mathbb{O}_{n \times m}$ denotes a $n\times m$-dimensional zero matrix and $0_n$ denotes a $n$-dimensional zero vector. 
Then, it is convenient to rewrite system \eqref{eq:PDMP_cascade} in the matrix-vector form
\begin{equation}\label{eq:PDMP_flow}
    d\bar{X}_t = A \bar{X}_t dt + \Gamma \:d\bar{Z}_t,\quad \bar{X}_0 = x_0\in \mathbb{R}^\kappa,
\end{equation}
with
\begin{itemize}
    \item $A \in \mathbb{R}^{\kappa\times\kappa}$ defined as
\begin{equation}\label{eq:A_block_matrix}
A = \left(\begin{matrix}
A_{\nu_1} & \mathbb{O}_{(\eta_1+1) \times (\eta_2+1)} \\ 
\mathbb{O}_{(\eta_2+1) \times (\eta_1+1)} & A_{\nu_2}
\end{matrix}\right),
\end{equation}
where 
 $A_{\nu_k}$ is a $(\eta_k+1)\times (\eta_k+1)$ tri-diagonal matrix with lower-diagonal equal to $0_{\eta_k}$, diagonal equal to $(-\nu_k,\dots, -\nu_k)$ and upper-diagonal equal to $(1,\dots, 1)$,
    \item %$\kappa\times 2$ matrix $\Gamma$ with 
    $\Gamma \in \mathbb{R}^{\kappa\times 2}$ having zero coefficients everywhere, except for $\Gamma_{\eta_1+1,2} = c_1$ and $\Gamma_{\kappa,1} = c_2$, 
    \item and $ \bar{Z}_t = \left( \bar{Z}^1_t,  \bar{Z}^2_t \right)^T$.
\end{itemize}
Throughout the paper the following convention is made. The coordinates of a generic vector $x$ in $\mathbb{R}^\kappa$ are either denoted as $(x_i)_{i=1,\dots,\kappa}$ or $(x^{k,j})_{k=1,2;\, j=1,\dots,\eta_k+1}$ with the relation $i = j$ if $k=1$ and $i = \eta_1+1+j$ if $k=2$. The second notation is usually preferred since each population is easily identified by the index $k$. For some generic function $g:\mathbb{R}^\kappa \to \mathbb{R}^\kappa$, the upper indexes are used as follows: $\left( g(x)\right)^{k,j}$. 
Moreover, it is sometimes more natural to consider some generic $\mathbb{R}^\kappa$-valued process $x_t$ population-wise. Thus, it is split into two components $x^1_t = (x^{1,1}_t,\dots,x^{1,\eta_1+1}_t)\in \mathbb{R}^{\eta_1+1} $ and $x^2_t = (x^{2,1}_t,\dots,x^{2,\eta_2+1}_t)\in \mathbb{R}^{\eta_2+1}$, such that $x_t = (x^1_t, x^2_t)^T \in \mathbb{R}^{\kappa}$.

\section{The limiting stochastic diffusion}\label{sec:stochdiffusion}
In \cite{Ditlevsen2017eva} it is proved that the limit behaviour of \eqref{eq:PDMP_flow} can be approximated by 
% a stochastic diffusion. In other words, the oscillatory behaviour of system \eqref{eq:PDMP_flow} is approximated by 
% J: it is weird to talk about oscillatory behaviour at this stage. Ok!
the diffusion process $X=(X^1,X^2)^T \in \mathbb{R}^\kappa$,  which is obtained as the the unique strong solution of the SDE
\begin{equation}\label{eq:hawkes_approximation}
dX_t = (AX_t + B(X_t))dt + \frac{1}{\sqrt N}\sigma(X_t)dW_t,  \quad X_0=x_0,
\end{equation}
where $W = (W^{1},W^{2})^T$ is a 2-dimensional Brownian motion, and $x_0\in \mathbb{R}^\kappa$ is a deterministic initial condition. 
The non-linear part of the drift term $B: \mathbb{R}^\kappa \to \mathbb{R}^\kappa$ is given by 
\begin{equation}\label{eq:B_vector}
    B(X)=(B^1(X^2),B^2(X^1))^T,
\end{equation}
where $B^1:\mathbb{R}^{\eta_2+1} \to \mathbb{R}^{\eta_1+1}$ and $B^2:\mathbb{R}^{\eta_1+1} \to \mathbb{R}^{\eta_2+1}$ read as
$B^1(X^2)=(0,\ldots,0,c_1f_2(X^{2,1}))$ and $ B^2(X^1)=(0,\ldots,0,c_2f_1(X^{1,1}))$.
The diffusion component $\sigma: \mathbb{R}^\kappa \to \mathbb{R}^{\kappa\times 2}$ is given by
\begin{equation}\label{eq:diffusion_component}
    \sigma(X)=\begin{pmatrix}
\sigma^1(X^2)  \\ 
\sigma^2(X^1)
\end{pmatrix},
\end{equation}
where
$\sigma^1:\mathbb{R}^{\eta_2+1} \to \mathbb{R}^{(\eta_1+1) \times 2}$ and $\sigma^2:\mathbb{R}^{\eta_1+1} \to \mathbb{R}^{(\eta_2+1) \times 2}$ read as
\begin{equation*}
\sigma^1(X^2)= \begin{pmatrix}
0 & 0 \\
\vdots & \vdots \\
0 & \frac{c_1}{\sqrt{p_2}}\sqrt{f_2(X^{2,1})} 
\end{pmatrix}, \quad 
\sigma^2(X^1)= \begin{pmatrix}
0 & 0 \\
\vdots & \vdots \\
\frac{c_2}{\sqrt{p_1}}\sqrt{f_1(X^{1,1})} & 0
\end{pmatrix}.
\end{equation*}
In other words, the jump term $\Gamma\: d\bar{Z}$, determining the dynamics of the Markovian cascade given in \eqref{eq:PDMP_flow}, is replaced by the sum of a non-linear drift and a diffusion term.

As $N$ goes to infinity, the diffusion term in \eqref{eq:hawkes_approximation} vanishes and the SDE transforms into an ODE of the form 
\begin{equation}\label{eq:ODE}
    dU_t = (AU_t + B(U_t))dt, \quad U_0 = x_0.
\end{equation}
The focus of this paper lies in the theoretical and numerical relations between the PDMP and its stochastic diffusion approximation. Thus, we do not address the properties of ODE \eqref{eq:ODE} in this work and refer to \cite{Ditlevsen2017eva} for related qualitative features and convergence results.

The rest of this section is organized as follows. First, we investigate how accurately the stochastic diffusion approximates the dynamics of the point process, proving a strong error bound between PDMP \eqref{eq:PDMP_flow} and SDE \eqref{eq:hawkes_approximation}. Then, we study the properties of SDE \eqref{eq:hawkes_approximation}, focusing on moment bounds.

\subsection{Strong error bound between the limiting diffusion and the piece-wise deterministic Markov process}
Any error bound of the diffusion approximation is determined by two facts, namely the approximation of a compensated Poisson process by a Brownian motion and the approximation of $N_k$ by  $p_kN$. We get rid of the second approximation by considering SDE \eqref{eq:hawkes_approximation} with $p_k = N_k/N$ and denote the solution of this equation by $Y$. By choosing a different notation we stress the fact that, on the contrary to $X$, it depends on the exact number of  neurons $N_k$ and not on its proportion, obtained in the mean-field limit.
% The solution of this equation is denoted by $Y$, to stress that it depends on the exact number of neurons $N_k$ and not on its proportion, obtained in a mean-field limit, on the contrary to $X$. \textcolor{red}{The previous sentence is too complicated for me. Can we probably split it in two?}
The same convention is used in \cite{Ditlevsen2017eva}, where the following weak error bound is proved. % \textcolor{red}{I am just wondering whether it is necessary to recall the theorem on the weak error bound. Why not just providing the new (stronger) result? Also because the convergence of the splitting scheme later is a strong result. We do not discuss weak convergence.}
\begin{theorem}[\cite{Ditlevsen2017eva}]\label{thm:weak_error_bound}
Grant assumption \ref{assumption:a}  and suppose that all spiking functions $f_k$ belong to the space $C^5_b$ of bounded functions having bounded derivatives up to order 5. Then there exists a constant $C$ depending only on $f_1$, $f_2$ and the bounds on their derivatives such that for all $ \varphi \in C^4_b(\mathbb{R}^\kappa, \mathbb{R})$ and $\forall x_0\in \mathbb{R}^\kappa$,
\begin{equation}\label{eq:result:weak:approx}
\sup_{x\in \mathbb{R}^k} \left| \mathbb{E}\varphi(\bar{X}_t) - \mathbb{E}\varphi(Y_t) \right| \leq Ct \frac{\|\varphi \|_{4,\infty}}{N^2}.
\end{equation}
\end{theorem}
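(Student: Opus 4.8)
The natural route is the \emph{semigroup (generator) comparison} method, which is the standard tool for weak diffusion-approximation bounds. Since both processes are Markovian, I would first write down their infinitesimal generators on a smooth test function $g$. Writing $e_1$ and $e_2$ for the canonical basis vectors of the coordinates $(1,\eta_1+1)$ and $(2,\eta_2+1)$ (the only ones receiving jumps), the PDMP \eqref{eq:PDMP_flow} has generator
\begin{equation}
\mathcal{A}g(x) = \langle Ax, \nabla g(x)\rangle + N_2 f_2(x^{2,1})\bigl[g(x+\tfrac{c_1}{N_2}e_1)-g(x)\bigr] + N_1 f_1(x^{1,1})\bigl[g(x+\tfrac{c_2}{N_1}e_2)-g(x)\bigr],
\end{equation}
because each of the $N_{k+1}$ neurons fires at rate $f_{k+1}(x^{k+1,1})$ and shifts $\bar X^{k,\eta_k+1}$ by $c_k/N_{k+1}$. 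For SDE \eqref{eq:hawkes_approximation} with $p_k=N_k/N$, using the explicit forms of $B$ and $\sigma$ and the fact that $\tfrac1N\sigma\sigma^\top$ is diagonal with entries $\tfrac1{N_2}f_2(x^{2,1})$ and $\tfrac1{N_1}f_1(x^{1,1})$ in those same coordinates, the generator is
\begin{equation}
\mathcal{L}g(x) = \langle Ax, \nabla g(x)\rangle + c_1 f_2(x^{2,1})\partial_{e_1}g(x) + c_2 f_1(x^{1,1})\partial_{e_2}g(x) + \tfrac{1}{2N_2}f_2(x^{2,1})\partial_{e_1}^2 g(x) + \tfrac{1}{2N_1}f_1(x^{1,1})\partial_{e_2}^2 g(x).
\end{equation}

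The next step is to Taylor-expand the jump increments of $\mathcal{A}$. As a jump of size $c_k/N_{k+1}$ occurs at rate $N_{k+1}f_{k+1}$, the $m$-th order term scales like $N_{k+1}^{1-m}$: the first two orders reproduce \emph{exactly} the drift and diffusion parts of $\mathcal{L}$, and the leading discrepancy appears at third order,
\begin{equation}
(\mathcal{A}-\mathcal{L})g(x) = \frac{c_1 f_2(x^{2,1})}{6N_2^{2}}\partial_{e_1}^3 g(x) + \frac{c_2 f_1(x^{1,1})}{6N_1^{2}}\partial_{e_2}^3 g(x) + \mathcal{R}(x),
\end{equation}
where the remainder $\mathcal{R}$ gathers fourth- and higher-order Taylor terms and is bounded by $C N^{-3}\|D^4 g\|_\infty$ via the integral form of Taylor's theorem. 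Since $N_k=p_kN$, the leading term is of exact order $N^{-2}$, which is the source of the rate. I would then set $u(s,x)=\mathbb{E}[\varphi(Y_s^x)]$, the solution of the backward Kolmogorov equation $\partial_s u = \mathcal{L}u$ with $u(0,\cdot)=\varphi$, and telescope along the PDMP flow: applying Dynkin's formula to $s\mapsto u(t-s,\bar X_s)$ and using $\partial_s u=\mathcal{L}u$ yields the exact error representation
\begin{equation}
\mathbb{E}\varphi(\bar X_t)-\mathbb{E}\varphi(Y_t) = \int_0^t \mathbb{E}\bigl[\bigl((\mathcal{A}-\mathcal{L})u(t-s,\cdot)\bigr)(\bar X_s)\bigr]\,ds.
\end{equation}
Inserting the generator-difference bound and using that $f_1,f_2$ are bounded by \ref{assumption:a} reduces the claim to controlling the third and fourth spatial derivatives of $u$ uniformly over $s\in[0,t]$.

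The main obstacle is therefore the regularity estimate $\sup_{s\le t}\|D^j u(s,\cdot)\|_\infty \le C\|\varphi\|_{j,\infty}$ for $j\le 4$, with a constant independent of $N$. Parabolic smoothing is unavailable here because the noise is degenerate (it acts only on the two coordinates $e_1,e_2$), so I would instead propagate the derivatives of $\varphi$ through the \emph{stochastic flow}: differentiate $x\mapsto Y_s^x$ up to order four, express $D^j u(s,x)$ by the chain rule as an expectation of products of the Jacobian and higher variation processes against $D^{\le j}\varphi$, and bound the moments of these variation processes. This is where the hypotheses pay off: $f_k\in C^5_b$ ensures that the coefficients of the variation equations have bounded derivatives up to the required order, while the block-triangular matrix $A$ has all its eigenvalues equal to the $-\nu_k<0$, so the linearized flow is contractive and the moment bounds hold uniformly in $s$ (and in $N$, since the diffusion coefficient carries the small factor $N^{-1/2}$). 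Combining these bounds with the error representation gives $|\mathbb{E}\varphi(\bar X_t)-\mathbb{E}\varphi(Y_t)| \le \int_0^t C N^{-2}\|\varphi\|_{4,\infty}\,ds = CtN^{-2}\|\varphi\|_{4,\infty}$, uniformly in the common initial condition $x_0$, which is the asserted bound.
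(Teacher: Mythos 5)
This statement is not proved in the paper at all: Theorem \ref{thm:weak_error_bound} is imported verbatim from \cite{Ditlevsen2017eva} as background motivation for the strong error bound of Theorem \ref{thm:strong:approx:diffusion}, so there is no in-paper proof to compare against; the relevant comparison is with the cited reference, whose strategy your outline indeed follows. Your first four steps are correct: the generator you write for the PDMP \eqref{eq:PDMP_flow} (jumps of size $c_k/N_{k+1}$ in coordinate $(k,\eta_k+1)$ at rate $N_{k+1}f_{k+1}(x^{k+1,1})$) and for the diffusion \eqref{eq:hawkes_approximation} with $p_k=N_k/N$ are both right, the Taylor expansion correctly shows that the first two orders reproduce the drift $B$ and the diffusion matrix $\tfrac1N\sigma\sigma^T$ exactly while the leading discrepancy is the third-order term of size $N_{k+1}^{-2}\asymp N^{-2}$, and the representation $\mathbb{E}\varphi(\bar X_t)-\mathbb{E}\varphi(Y_t)=\int_0^t\mathbb{E}\bigl[\bigl((\mathcal{A}-\mathcal{L})u(t-s,\cdot)\bigr)(\bar X_s)\bigr]ds$ via the backward Kolmogorov equation and Dynkin's formula is the standard and correct reduction.

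The genuine gap is the final regularity step, which is also where essentially all of the work of the actual proof lies. You need $\sup_{s\le t}\|D^ju(s,\cdot)\|_\infty\le C\|\varphi\|_{j,\infty}$ for $j\le 4$ with $C$ independent of $s$ (otherwise the error representation yields $Ce^{Ct}N^{-2}$, not the asserted $CtN^{-2}$), and you justify this by noting that $A$ has eigenvalues $-\nu_k<0$, ``so the linearized flow is contractive''. That is not the relevant linearization: differentiating the stochastic flow of \eqref{eq:hawkes_approximation} gives variation equations driven by $A+DB(Y_s)$ (plus noise-derivative terms), and $DB$ contains the feedback entries $c_1f_2'(\cdot)$ and $c_2f_1'(\cdot)$, which are bounded but not small. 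A bounded perturbation of a stable matrix need not be stable; in fact, for this model the cyclic feedback gain $c_1c_2\,f_1'f_2'$ exceeding a threshold is exactly the mechanism that destabilizes the equilibrium and produces the oscillations the model is designed to exhibit. At such an unstable equilibrium $x^*$ the flow Jacobian is $e^{(A+DB(x^*))s}$ and grows exponentially, so the Gr\"onwall argument you sketch can only deliver $\|D^ju(s,\cdot)\|_\infty\le Ce^{Cs}\|\varphi\|_{j,\infty}$, which does not close the proof. Obtaining derivative estimates uniform in $s$ (and in $N$) is the technical core of \cite{Ditlevsen2017eva} and is missing from your argument. A secondary issue in the same step: the variation equations involve derivatives of $\sigma$, i.e.\ of $\sqrt{f_k}$, and $f_k\in C^5_b$ together with $f_k>0$ does not give bounded derivatives of $\sqrt{f_k}$ unless $f_k$ is bounded away from zero (or has extra structure), so even the boundedness of the coefficients of the variation equations requires justification.
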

In the following, we strengthen the above result, allowing for a comparison of trajectories of the PDMP and the diffusion. %That is exactly the purpose of the next result. 
% J: Maybe put what follows below the Theorem...
%The convergence of Theorem \ref{thm:strong:approx:diffusion} below takes place under a much stronger sense than Theorem \ref{thm:weak_error_bound} above: 
\begin{theorem}[Strong error bound]\label{thm:strong:approx:diffusion}
Grant assumption \ref{assumption:a} and let $||\cdot||_{\infty}$ denote the sup norm on $\mathbb{R}^\kappa$. For all $N>0$, a solution $\bar{X}$ of \eqref{eq:PDMP_flow} and a solution $Y$ of \eqref{eq:hawkes_approximation} (with $p_k=N_k/N$) can be constructed on the same probability space such that there exists a constant $C>0$ such that, for all $T>0$, 
\begin{equation}\label{eq:result:strong:approx}
    \sup_{t\leq T} \| \bar{X}_t - Y_t \|_{\infty} \leq \Theta_N e^{CT} \frac{\log(N)}{N}
\end{equation}
almost surely, where $\Theta_N$ is a random variable with exponential moments whose distribution does not depend on $N$. In particular,
\begin{equation}\label{eq:strong:approx:expectation}
\mathbb{E}\left[ \sup_{t\leq T} \| \bar{X}_t - Y_t \|_{\infty} \right] \leq C e^{CT} \frac{\log(N)}{N}.
\end{equation}
\end{theorem}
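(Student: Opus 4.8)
The plan is to realize the Markovian cascade and the diffusion on a common probability space through a strong (Koml\'os--Major--Tusn\'ady type) coupling between the counting processes driving \eqref{eq:PDMP_flow} and the Brownian motion driving \eqref{eq:hawkes_approximation}, and then to propagate the resulting pathwise error through the dynamics by a Gronwall argument. First I would bring both equations to a comparable form. Compensating the jumps in \eqref{eq:PDMP_flow} gives $d\bar Z^k_t = f_k(\bar X^{k,1}_t)\,dt + d\bar M^k_t$, where $\bar M^k$ is a martingale; the drift created this way is exactly $B(\bar X)$, so the PDMP reads $d\bar X_t = (A\bar X_t + B(\bar X_t))\,dt + \Gamma\,d\bar M_t$ and shares the drift of \eqref{eq:hawkes_approximation}. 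Since the $N_k$ neurons of population $k$ share the intensity $f_k(\bar X^{k,1}_{t-})$ and a.s. never jump simultaneously, the multivariate time-change theorem for counting processes represents the aggregated counts as $\sum_{n} Z^{k,n}_t = \Pi_k(\Lambda^{\bar X}_k(t))$ with $\Lambda^{\bar X}_k(t) = N_k\int_0^t f_k(\bar X^{k,1}_s)\,ds$ and $\Pi_1,\Pi_2$ two independent standard Poisson processes, so that the martingale part is the normalized centred Poisson process $N_k^{-1}\tilde\Pi_k(\Lambda^{\bar X}_k(t))$, with $\tilde\Pi_k(u)=\Pi_k(u)-u$.

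Second, I would couple each $\tilde\Pi_k$ with a standard Brownian motion $B_k$ (with $B_1,B_2$ independent) so that $\sup_{u\le U}|\tilde\Pi_k(u)-B_k(u)|\le \Theta\,\log(U\vee 2)$ a.s., where $\Theta$ has exponential moments and a law independent of $N$. Because $\Lambda^{\bar X}_k(T)\le N_k f^{\max}_k T$, the remainder contributes at most $N_k^{-1}\Theta\,\log(\Lambda^{\bar X}_k(T))\lesssim \Theta\,\log(N)/N$ uniformly on $[0,T]$, which is the origin of the rate. For the Gaussian part, the time-changed martingale $B_k(\Lambda^{\bar X}_k(\cdot))$ has absolutely continuous bracket $\Lambda^{\bar X}_k$ with positive density $N_k f_k(\bar X^{k,1})$ (by assumption \ref{assumption:a}), so a Dambis--Dubins--Schwarz type representation yields a Brownian motion $W^k$ with $N_k^{-1}B_k(\Lambda^{\bar X}_k(t)) = N_k^{-1/2}\int_0^t\sqrt{f_k(\bar X^{k,1}_s)}\,dW^k_s$; recalling $N^{-1/2}p_k^{-1/2}=N_k^{-1/2}$, this is exactly the diffusion coefficient of \eqref{eq:hawkes_approximation} evaluated along $\bar X$. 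The two time-changed parts are orthogonal continuous martingales, so by L\'evy's characterization $W=(W^1,W^2)$ is a two-dimensional Brownian motion. Taking $Y$ to be the strong solution of \eqref{eq:hawkes_approximation} driven by this $W$ yields the exact identity $d\bar X_t = (A\bar X_t+B(\bar X_t))\,dt + N^{-1/2}\sigma(\bar X_t)\,dW_t + d\mathcal R_t$ with $\sup_{t\le T}\|\mathcal R_t\|_\infty\le \Theta\,\log(N)/N$.

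Third, I would estimate $D_t=\bar X_t-Y_t$, which starts at $0$ and solves $dD_t = (AD_t + B(\bar X_t)-B(Y_t))\,dt + N^{-1/2}(\sigma(\bar X_t)-\sigma(Y_t))\,dW_t + d\mathcal R_t$. The drift is Lipschitz of order one (from $A$ and, by assumption \ref{assumption:a}, $B$), while the diffusion difference is Lipschitz of order $N^{-1/2}$ in $D$. A self-consistent order count shows the stochastic-integral term has size $\lesssim N^{-1/2}\sup_{t\le T}\|D_t\|$, hence is genuinely higher order, so the dominant forcing is $\mathcal R$ of size $\log(N)/N$. I would make this rigorous through a stochastic Gronwall argument (Burkholder--Davis--Gundy plus iteration applied to $\sup_{s\le t}\|D_s\|$): taking expectations gives \eqref{eq:strong:approx:expectation} directly, and tracking the random prefactor gives the almost sure bound \eqref{eq:result:strong:approx} with $\Theta_N$ assembled from $\Theta$ and the exponentially integrable, higher-order fluctuation of the stochastic integral; its law is free of $N$ precisely because the factor $\log(N)/N$ has been pulled out.

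The main obstacle, I expect, is concentrated in the second and third steps. Setting up the coupling requires a \emph{simultaneous} strong approximation of the two interacting counting processes by two \emph{independent} Brownian motions, compatibly with the time change that defines the driving noise of $Y$; the orthogonality and L\'evy argument is what makes $(W^1,W^2)$ a genuine planar Brownian motion. Upgrading the estimate from a moment bound to the almost sure statement with the precise prefactor is the other delicate point: the stochastic-integral difference prevents a purely deterministic Gronwall inequality and must be absorbed via its smallness in $N^{-1/2}$, all while ensuring the law of $\Theta_N$ does not depend on $N$.
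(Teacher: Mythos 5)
Your coupling ingredients are essentially the paper's: the KMT-type strong approximation (Lemma \ref{lemma:KMT}) combined with the time-change representation of the counting processes. Your construction only reverses the order of the paper's (you build $\bar X$ from the Poisson processes and then manufacture $W$ via Dambis--Dubins--Schwarz, while the paper prescribes $W$ and $Y$ first and then couples Poisson processes to the time-changed Brownian motions $B^k_{\Lambda^k_t}$; the paper's order avoids the filtration point you would have to justify, namely that $B_k$ remains a martingale in the joint filtration of the KMT pair $(\Pi_k,B_k)$ so that DDS applies). The genuine problems lie in your error-propagation step. First, the claim that the diffusion difference is ``Lipschitz of order $N^{-1/2}$ in $D$'' is not available under assumption \ref{assumption:a}: $\sigma$ involves $\sqrt{f_k}$, and a positive, bounded, Lipschitz $f_k$ that is not bounded away from zero only yields $|\sqrt{f_k(x)}-\sqrt{f_k(y)}|\le\sqrt{L_k|x-y|}$, i.e.\ H\"older-$1/2$. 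For the expectation bound \eqref{eq:strong:approx:expectation} this is reparable (BDG then produces a term of the form $N^{-1/2}\bigl(\int_0^T \mathbb{E}\|D_s\|\,ds\bigr)^{1/2}$, which Young's inequality and Gr\"onwall absorb), but it invalidates the linear-in-$D$ structure on which your order counting rests.

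Second, and more seriously, the almost-sure statement \eqref{eq:result:strong:approx} --- with a prefactor $\Theta_N$ having exponential moments and $N$-free law --- cannot be obtained by ``BDG plus iteration'': BDG gives moment bounds, not pathwise bounds, and there is no pathwise estimate of $\int_0^t(\sigma(\bar X_s)-\sigma(Y_s))\,dW_s$ by $N^{-1/2}\sup_{s\le t}\|D_s\|$. The missing idea is the paper's second key lemma (Lemma \ref{lem:mod:cont:BM}): a sharp modulus of continuity for Brownian motion whose square has exponential moments. Pathwise, the Gaussian fluctuation difference is bounded by $M\sqrt{CTG\,(1+\log(Nf^{\max}/CG))}$ with $G=\sup_{t\le T}N\|\bar X_t-Y_t\|$, i.e.\ by the \emph{square root} of the very quantity one wants to bound; the Gr\"onwall estimate therefore becomes a quadratic self-bounding inequality in $\sqrt{G}$ that must be solved explicitly, and this resolution is precisely where the paper's constant and the exponential moments of $\Theta_N$ come from. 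Note also that, pathwise, this fluctuation term is of the \emph{same} order $\log(N)/N$ as the KMT remainder --- your ``genuinely higher order'' claim is only true in expectation. Without the modulus-of-continuity lemma and the quadratic-inequality step, the almost-sure bound, which is the substance of the theorem beyond \eqref{eq:strong:approx:expectation}, remains unproved.
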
 
 The proof of Theorem \ref{thm:strong:approx:diffusion} is mainly inspired by \cite{Kurtz1978} and relies on two main ingredients, a strong coupling between the standard Poisson process and the Brownian motion and a sharp result on the modulus of continuity for the Brownian motion. All the material is postponed to Appendix \ref{app:strong_bound}.
 
When comparing \eqref{eq:result:weak:approx} and \eqref{eq:strong:approx:expectation}, one notices that there is an exchange between the expectation sign and the absolute value. There are two prices to pay for such an exchange. First, a slower convergence rate with respect to $N$. Second, a faster divergence rate with respect to $t$ (the exponential term is coming from a Grönwall type argument). 
In the following remark we precise the bound on the error which is caused by using directly the parameter $p_k$  instead of $N_k/N$.
\begin{remark}\label{rem:convergence:Nk:pk}
Let $Y$ denote a solution of \eqref{eq:hawkes_approximation} (with parameter $p_k$ equal to $N_k/N$) and $X$ denote a solution of \eqref{eq:hawkes_approximation} with fixed values $p_k$. Following the proof of Theorem \ref{thm:strong:approx:diffusion}, one can show that 
\begin{equation}
    \sup_{t\leq T} \| X_t - Y_t \| \leq \Theta_N e^{CT}\left( \frac{\log(N)}{N} + \max_k\left\{ \frac{1}{\sqrt{p_k N}} \left( 1 - \sqrt{p_k N/N_k}\right) \right\}\right) 
\end{equation}
so that the strong error bound stated in the theorem also holds for the non-modified SDE if $\sqrt{p_k N/N_k} - 1$ is of order $N^{-1/2}$ or of faster order. 

Fortunately, for any fixed $N$, setting $N_1 = \lfloor p_1N \rfloor$ and $N_2 = \lceil p_2 N \rceil$ ensures that $\sqrt{p_k N/N_k} - 1$ is of order $N^{-1} < N^{-1/2}$, which grants that Theorem \ref{thm:strong:approx:diffusion} holds for SDE \eqref{eq:hawkes_approximation}.
\end{remark}

Since SDE \eqref{eq:hawkes_approximation} transforms into ODE \eqref{eq:ODE} as $N$ goes to infinity, the strong error bound can be used to prove the convergence of the PDMP to the solution of the ODE. However, this is beyond the scope of this paper.

\subsection{Properties of the stochastic diffusion} 
The solution process $(X_t)_{t\geq 0}$ of SDE \eqref{eq:hawkes_approximation} is positive Harris recurrent with invariant measure $\pi$ which is of full support (see \cite{Locherbach2019}). It means that the trajectories visit all sets in the support of the invariant measure infinitely often almost surely. More precisely, for any initial condition $x_0$ and measurable set $A$ such that $\pi(A)>0$, $\limsup_{t\to +\infty} \mathbf{1}_{A}(X_t) = 1$ almost surely.
Besides, by following the arguments in \cite{Mattingly2002}, the technical results proven in \cite{Locherbach2019} can be used to prove geometric ergodicity of $(X_t)_{t\geq 0}$ as stated in Proposition \ref{prop:ergodicity_process} below.

%We are now interested to show that the process is ergodic, i.e. that there exists a unique invariant measure and  that the trajectories visit all sets in the support of the invariant measure infinitely often almost surely. This fact is a simple consequence of the results proven in \cite{Ditlevsen2017eva, Locherbach2019 and is formalized below in Proposition \ref{prop:ergodicity_process}.} 

In order to state the geometric ergodicity of $(X_t)_{t\geq 0}$, let us first specify the Lyapunov function $G:\mathbb{R}^\kappa\to \mathbb{R}$ introduced in \cite{Ditlevsen2017eva}:
\begin{equation}\label{eq:lyapunov_eva}
G(x) = \sum_{k=1}^K \sum_{j=1}^{\eta_k+1}\frac{j}{\nu^{j-1}_k}J(x^{k,j}),
\end{equation}
where $J$ is some smooth approximation of the absolute value. In particular, $J(x) = |x|$ for all $|x|\geq 1$ and $\max\{ |J^\prime(x)|,|J^{\prime\prime}(x)|\} \leq J_c$ for all $x$, for some finite constant $J_c$.

\begin{prop}[Geometric ergodicity]\label{prop:ergodicity_process}
Grant assumption \ref{assumption:a}. Then the solution of SDE \eqref{eq:hawkes_approximation} has a unique invariant measure $\pi$ on $\mathbb{R}^\kappa$. For all initial conditions $x_0$ and all $m\geq 1$, there exist $C = C(m) >0$ and  $\lambda = \lambda(m)>0$ such that, for all measurable functions $g:\mathbb{R}^\kappa\to \mathbb{R}$ such that $|g|\leq G^m$,
\begin{equation*}
    \forall t \geq 0, \quad \left| \mathbb{E} g(X_t) - \pi(g)\right|\leq C G(x_0)^m e^{-\lambda t}.
\end{equation*}
\end{prop}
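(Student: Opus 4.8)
The plan is to derive the geometric rate from the drift-plus-minorization (continuous-time Harris / Meyn--Tweedie) framework for diffusions with locally Lipschitz coefficients and degenerate noise, as formulated in \cite{Mattingly2002}. Write $\mathcal{L}$ for the infinitesimal generator of SDE \eqref{eq:hawkes_approximation},
\[
\mathcal{L} = \sum_{i=1}^\kappa (Ax+B(x))_i\,\partial_i + \frac{1}{2N}\sum_{i,\ell=1}^\kappa \big(\sigma(x)\sigma(x)^T\big)_{i\ell}\,\partial_i\partial_\ell .
\]
I would establish two ingredients and then feed them into the abstract theorem. First, a geometric drift (Lyapunov) condition $\mathcal{L}V_m \le -\lambda_0 V_m + d_m$ for the weight $V_m = 1 + G^m$, with $G$ as in \eqref{eq:lyapunov_eva}. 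Second, a local minorization (Doeblin) condition on the sublevel sets $\mathcal{C}_R = \{x : G(x)\le R\}$. Granting both, the theorem of \cite{Mattingly2002} yields a unique invariant measure $\pi$ together with a convergence bound in the $V_m$-weighted norm, $\sup_{|g|\le V_m}|\mathbb{E}g(X_t)-\pi(g)| \le C\,V_m(x_0)\,e^{-\lambda t}$, which gives the stated estimate since any $g$ with $|g|\le G^m$ satisfies $|g|\le V_m$ and $V_m(x_0)\le C(1+G(x_0)^m)$.

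For the drift condition I would first treat $m=1$. Since $G$ is a separable sum of functions of single coordinates, the second-order part of $\mathcal{L}G$ reduces to $\frac{1}{2N}\sum_{k,j}\frac{j}{\nu_k^{j-1}}(\sigma\sigma^T)_{(k,j)(k,j)}J''(x^{k,j})$; because $\sigma\sigma^T$ has only two nonzero diagonal entries, each bounded by $f^{\max}_k/p_k$, and $|J''|\le J_c$, this whole term is bounded by a constant. The first-order part is the action of the deterministic flow $Ax+B(x)$ on $G$, and here the weights $\tfrac{j}{\nu_k^{j-1}}$ are exactly those tuned in \cite{Ditlevsen2017eva} so that the dissipative diagonal of $A$ dominates the upper-diagonal coupling in the cascade; the telescoping leaves a strictly negative coefficient in front of each $|x^{k,j}|$, while the bounded nonlinearity $c_k f_{k+1}$ and the bound $|xJ'(x)-|x||\le C$ only contribute to a constant. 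This gives $\mathcal{L}G \le -c_1 G + d_1$. For general $m$, I would use $\mathcal{L}G^m = mG^{m-1}\mathcal{L}G + \tfrac{m(m-1)}{2}G^{m-2}\tfrac{1}{N}|\sigma(x)^T\nabla G(x)|^2$, observe that the carr\'e-du-champ term $\tfrac1N|\sigma^T\nabla G|^2$ is again bounded (as $\sigma$ carries only the bounded factors $\sqrt{f_k}$ and $\nabla G$ is bounded), and then apply Young's inequality to absorb the lower-order powers, yielding $\mathcal{L}V_m \le -\lambda_0 V_m + d_m$.

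For the minorization condition I would invoke the hypoellipticity and controllability analysis of \cite{Locherbach2019}. Verifying H\"ormander's bracket condition for the vector fields of \eqref{eq:hawkes_approximation} guarantees that $P_t(x,\cdot)$ has a density $p_t(x,y)$ which is smooth and, via a support/controllability argument for the associated deterministic control system, strictly positive. Consequently, for each level $R$ there exist $t^*>0$, $\eta>0$ and a probability measure $\nu$ with $P_{t^*}(x,\cdot)\ge \eta\,\nu(\cdot)$ for all $x\in\mathcal{C}_R$, which is the required local Doeblin estimate; the everywhere-positive density also supplies the irreducibility and aperiodicity needed to pin down $\pi$ as unique.

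The routine part is the drift condition, which relies only on the boundedness of $f_k$, $J'$, $J''$ and on the cascade structure already exploited in \cite{Ditlevsen2017eva}. The genuine difficulty is the minorization, because the noise is highly degenerate, entering only two of the $\kappa$ coordinates: obtaining a lower bound on $P_{t^*}(x,\cdot)$ that is \emph{uniform} over a sublevel set requires both the hypoelliptic smoothing (H\"ormander's condition for the bracket-generating structure of the cascade) and a controllability argument driving the process into a common small set. These are precisely the technical results borrowed from \cite{Locherbach2019}, and confirming that they hold uniformly on each $\mathcal{C}_R$ is where the main work lies. With (i) and (ii) in hand, the Harris theorem of \cite{Mattingly2002} closes the proof.
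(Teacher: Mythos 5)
Your proposal is correct and follows essentially the same route as the paper's proof: a Lyapunov (drift) condition for $G$ (the paper cites \cite[Proposition 5]{Ditlevsen2017eva} where you re-derive it, and handles the powers $G^m$ inside the cited framework where you use Young's inequality), combined with a minorization obtained from the H\"ormander/hypoellipticity and controllability results of \cite{Ditlevsen2017eva,Locherbach2019}, all fed into the Harris-type theorem of \cite{Mattingly2002}. The only cosmetic difference is that the paper applies \cite[Theorem 2.5]{Mattingly2002} to a discrete-time sampling of the process and concludes by interpolation, whereas you invoke the continuous-time formulation directly; the ingredients are identical.
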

\begin{proof}
The proof closely follows that of Theorem 3.2 in \cite{Mattingly2002} and is based on Lyapunov and minorization conditions  (the latter is implied by the existence of a smooth transition density and the irreducibility of the space).  

 (i) First, we use the fact that $G$ is a Lyapunov function for $X$  \cite[Proposition 5]{Ditlevsen2017eva}, i.e., $\exists \alpha, \beta >0,$ s.t. 
 \[
 \mathcal{A}^XG(x) \leq -\alpha G(x) + \beta,
 \]
 where $ \mathcal{A}^XG(x)$ is the infinitesimal generator of \eqref{eq:hawkes_approximation}. 
 
 (ii) Then, we note that, from any initial condition $x_0$, for any time $T>0$ and any open set $O$, the probability that $X_T$ belongs to $O$ is positive.
 It is ensured by the controllability of system \eqref{eq:hawkes_approximation} (see Theorem 4 in \cite{Locherbach2019}).

 (iii) Finally, we note that the process $(X_t)_{t\geq 0}$ possesses a smooth transition density. Its existence is ensured by verifying the H\"ormander condition, which is done in Proposition 7 of \cite{Ditlevsen2017eva}. 

% Then, by applying Theorem 2.5 from \cite{Mattingly2002} one obtains the following for some $r\in (0,1)$ and $C^*>0$:
% \[
% \left| \mathbb{E} g(X_t) - \pi(g)\right| \leq C^* r^t G^m(x_0).
% \]
% The exponential moments are then obtained by substitution as in \cite{Mattingly2002}. 
The rest of the proof follows as in the proof of  \cite[Theorem 3.2.]{Mattingly2002}: apply \cite[Theorem 2.5.]{Mattingly2002} to some discrete-time sampling of the process and conclude by interpolation.
\end{proof}
Also note that the rank of the diffusion matrix $\sigma\sigma^T$ is smaller than the dimension of system \eqref{eq:hawkes_approximation}. This means that the system is not elliptic. However, the specific cascade structure of the drift ensures that the noise is propagated through the whole system via the drift term, such that the diffusion is hypoelliptic in the sense of stochastic calculus of variations \citep{Delarue2010,Malliavin2006}.
We also note that SDE \eqref{eq:hawkes_approximation} is semi-linear, with a linear term given by matrix \eqref{eq:A_block_matrix}. Thus, its solution can be written in the form of a convolution equation (see, among others, \citet[Section 3]{Mao2007}).
\begin{prop}\label{prop:convolution}
The unique solution of \eqref{eq:hawkes_approximation} satisfies
\begin{equation}\label{eq:Conv_X}
   X_t = e^{At}x_0 + \int_0^t e^{A(t-s)} B(X_s) ds + \frac{1}{\sqrt{N}} \int_0^t e^{A(t-s)} \sigma(X_s) dW_s.
\end{equation}
\end{prop}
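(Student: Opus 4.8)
The plan is to establish the convolution formula by the classical variation-of-constants (Duhamel) method, reducing everything to a single application of the Itô product rule. Since the existence and uniqueness of the strong solution $(X_t)_{t\geq 0}$ of \eqref{eq:hawkes_approximation} is already granted, it suffices to manipulate the given dynamics into the equivalent integral identity \eqref{eq:Conv_X}.

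First I would introduce the auxiliary process $Y_t = e^{-At}X_t$. Because $t\mapsto e^{-At}$ is deterministic and continuously differentiable, it is of finite variation with zero quadratic variation, so the Itô product (integration-by-parts) rule applies with no bracket term:
\begin{equation*}
dY_t = -Ae^{-At}X_t\,dt + e^{-At}\,dX_t.
\end{equation*}
Substituting the dynamics \eqref{eq:hawkes_approximation} for $dX_t$ and using that $A$ commutes with $e^{-At}$, the two drift contributions $-Ae^{-At}X_t\,dt$ and $e^{-At}AX_t\,dt$ cancel, which leaves
\begin{equation*}
dY_t = e^{-At}B(X_t)\,dt + \frac{1}{\sqrt{N}}\,e^{-At}\sigma(X_t)\,dW_t.
\end{equation*}

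Next I would integrate this identity from $0$ to $t$, using $Y_0 = x_0$, and then multiply through by $e^{At}$. The final step is to carry the deterministic matrix $e^{At}$ inside both integrals: for the Lebesgue integral this is immediate, and for the Itô integral it follows from the linearity of the stochastic integral with respect to a factor that is constant in the integration variable $s$ (here $e^{At}$ is fixed once $t$ is fixed). This turns the integrands into $e^{A(t-s)}B(X_s)$ and $e^{A(t-s)}\sigma(X_s)$ and yields exactly \eqref{eq:Conv_X}.

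I do not anticipate a genuine obstacle, as this is a standard semilinear variation-of-constants computation. The only points requiring minor care are the commutativity $Ae^{-At}=e^{-At}A$ (immediate, since both are functions of $A$) and the justification for moving $e^{At}$ inside the stochastic integral. Well-definedness of the Itô integral is unproblematic: under assumption \ref{assumption:a} one has $f_k\leq f_k^{\max}$, so both $B$ and $\sigma$ are bounded, and the integrand $e^{-As}\sigma(X_s)$ is bounded on $[0,t]$, ensuring square-integrability.
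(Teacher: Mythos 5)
Your proposal is correct and follows essentially the same route as the paper's own proof: introducing $Y_t = e^{-At}X_t$, applying the It\^o product rule so the linear drift terms cancel, integrating, and multiplying back by $e^{At}$. The additional remarks you make (commutativity of $A$ with $e^{-At}$, pulling $e^{At}$ inside the integrals, and square-integrability from the boundedness of $B$ and $\sigma$ under assumption \ref{assumption:a}) are all sound and simply make explicit what the paper leaves implicit.
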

\begin{proof}
Consider the process $Y_t = e^{-At}X_t$. By It\^o's formula one obtains
\begin{eqnarray*}
    d\left(e^{-At}X_t\right) &=& \left( -Ae^{-At}X_t + e^{-At} \left(  AX_t + B(X_t)\right)\right)dt + \frac{e^{-At} }{\sqrt{N}}  \sigma(X_t) dW_t \\ &=&
     e^{-At}  B(X_t)dt + \frac{e^{-At}}{\sqrt{N}}   \sigma(X_t) dW_t.
\end{eqnarray*}
Integrating both parts yields
\[
e^{-At}X_t = x_0 + \int_0^t e^{-As} B(X_s) ds + \frac{1}{\sqrt{N}} \int_0^t e^{-As} \sigma(X_s) dW_s.
\]
Multiplying the expression by $e^{At}$ gives the result. 
\end{proof}
Note that from this form, it is straightforward to see that the diffusion term is of full rank. %\textcolor{red}{I would stop here and remove the rest of the sentence, I find it too complicated?} \textcolor{green}{because the noise is propagated to the terms indexed by $j=1,\dots, \eta_k$ in a form of $(\eta_k-j)$-times integrated Brownian motion.} 
Intuitively, this ensures the hypoellipticity. Further, systems of type \eqref{eq:Conv_X} are called stochastic Volterra equations \citep{Jaber2019}. 

Now we focus on first and second moment bounds. The following results are needed, in particular, to ensure the accuracy of the approximation scheme in Section \ref{sec:simulation_methods}. In the following remark we provide some purely computational results in order to ease the further analysis.
\begin{remark}\label{rem:expA}
Due to the block-structure of the matrix $A$ introduced in \eqref{eq:A_block_matrix}, its matrix exponential $e^{At}$ can be computed as
\begin{equation*}
e^{At} = \left(\begin{matrix}
e^{A_{\nu_1}t} & \mathbb{O}_{(\eta_1+1) \times (\eta_2+1)} \\ 
\mathbb{O}_{(\eta_2+1) \times (\eta_1+1)} & e^{A_{\nu_2}t}
\end{matrix}\right),
\end{equation*}
where $e^{A_{\nu_k}t}$, $k=1,2$, is a $(\eta_k+1)\times (\eta_k+1)$ upper-triangular matrix given by 
\begin{equation}\label{eq:A_exponential}
    e^{A_{\nu_k}t} = e^{-\nu_k t }\left(\begin{matrix}
    1 & t & \frac{t^2}{2} & \dots & \frac{t^{\eta_k}}{\eta_k!} \\ 
    0 & 1 & t &  \dots & \frac{t^{\eta_k-1}}{(\eta_k-1)!} \\ 
     \vdots & \ddots & \ddots & \ddots & \vdots \\ 
     \vdots & \vdots & \ddots & \ddots & \vdots \\ 
     0 & 0 & 0 & \hdots & 1
    \end{matrix}\right).
\end{equation}
In further computations we will often use the vectors $e^{At} X_{s}$. The elements of $e^{At} X_{s}$ are given by the formula
 \begin{equation}\label{eq:jth_element}
     \left(e^{At} X_{s}\right)^{k,j} = e^{-\nu_kt}\sum_{m=j }^{\eta_k+1}\frac{t^{m-j}}{(m-j)!}X^{k,m}_s.
 \end{equation}
\end{remark}
\begin{theorem}[First moment bounds of the diffusion process]\label{thm:moment_bounds_process}
Grant assumption \ref{assumption:a}. The following bounds hold for the components of $\mathbb{E}[X_t]$:
\begin{equation*}
  \mathcal{I}^{k,j}_{\min} \leq  \mathbb{E}[X^{k,j}_t] \leq \mathcal{I}^{k,j}_{\max},
\end{equation*}
where 
\begin{align*}
    \mathcal{I}^{k,j}_{\min} &= \left(e^{At} x_{0}\right)^{k,j} + \left[1-e^{-t\nu_k}\sum_{l=0}^{\eta_k+1-j}\frac{(t\nu_k)^l}{l!}  \right]\min\left\{0, \frac{c_k f_{k+1}^{\max}}{\nu_k^{(\eta_k+2-j)} } \right\}, \\ 
    \mathcal{I}^{k,j}_{\max} &= \left(e^{At} x_{0}\right)^{k,j} + \left[1-e^{-t\nu_k}\sum_{l=0}^{\eta_k+1-j}\frac{(t\nu_k)^l}{l!}  \right]\max\left\{0, \frac{c_k f_{k+1}^{\max}}{\nu_k^{(\eta_k+2-j)} } \right\}. 
\end{align*}
% and with the convention that for $k=2$ we have $f_{k+1}^{\max}\equiv f_1^{\max}$. 
\end{theorem}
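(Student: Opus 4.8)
The plan is to start from the convolution representation \eqref{eq:Conv_X} furnished by Proposition \ref{prop:convolution} and take expectations. First I would argue that the stochastic integral $\frac{1}{\sqrt N}\int_0^t e^{A(t-s)}\sigma(X_s)\,dW_s$ is a genuine martingale with zero mean: by assumption \ref{assumption:a} the functions $f_k$ are bounded, hence $\sigma$ is bounded, and since $e^{A(t-s)}$ is bounded on $[0,t]$ the integrand is square-integrable, so this term has vanishing expectation. A Fubini argument (again justified by boundedness of $B$, which only involves the bounded $f_k$) then gives
\begin{equation*}
\mathbb{E}[X_t] = e^{At}x_0 + \int_0^t e^{A(t-s)}\,\mathbb{E}[B(X_s)]\,ds.
\end{equation*}

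Next I would exploit the sparse structure of $B$. By \eqref{eq:B_vector}, within each population block $k$ the only nonzero coordinate of $B(x)$ is the last one, namely $\left(B(x)\right)^{k,\eta_k+1}=c_k f_{k+1}(x^{k+1,1})$. Substituting this into the explicit entrywise formula \eqref{eq:jth_element} for $e^{A(t-s)}$ collapses the inner sum to a single surviving term, yielding the scalar representation
\begin{equation*}
\mathbb{E}[X^{k,j}_t] = \left(e^{At}x_0\right)^{k,j} + c_k\int_0^t e^{-\nu_k(t-s)}\frac{(t-s)^{\eta_k+1-j}}{(\eta_k+1-j)!}\,\mathbb{E}\!\left[f_{k+1}(X^{k+1,1}_s)\right]ds.
\end{equation*}

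The bounds then follow by sandwiching. Since $0<f_{k+1}\le f^{\max}_{k+1}$ by \ref{assumption:a} and the convolution kernel $e^{-\nu_k(t-s)}(t-s)^{\eta_k+1-j}/(\eta_k+1-j)!$ is nonnegative on $[0,t]$, the integral lies between $0$ and $f^{\max}_{k+1}$ times the kernel integral. Substituting $u=t-s$ and evaluating $\int_0^t e^{-\nu_k u}\,u^{\eta_k+1-j}/(\eta_k+1-j)!\,du$ via the lower incomplete Gamma function produces exactly the bracketed factor $\nu_k^{-(\eta_k+2-j)}\bigl[1-e^{-t\nu_k}\sum_{l=0}^{\eta_k+1-j}(t\nu_k)^l/l!\bigr]$ appearing in the statement, which is itself nonnegative. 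Finally I would split on the sign of $c_k$: when $c_k=+1$ the additive term ranges over a nonnegative interval, so $0$ is its lower endpoint and $f^{\max}_{k+1}\nu_k^{-(\eta_k+2-j)}[\cdots]$ its upper one; when $c_k=-1$ the interval is its reflection. These two cases are precisely what the $\min\{0,\cdot\}$ and $\max\{0,\cdot\}$ expressions in $\mathcal{I}^{k,j}_{\min}$ and $\mathcal{I}^{k,j}_{\max}$ encode.

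The argument is essentially a direct computation. The only points requiring genuine care are the a priori integrability needed to annihilate the martingale term and to apply Fubini, both immediate from the uniform boundedness of the $f_k$, and the evaluation of the Gamma-type integral, which is the single step that manufactures the precise polynomial-times-exponential bracket. I do not anticipate any obstacle beyond carefully tracking the index shift $p=\eta_k+1-j$ (so that $p+1=\eta_k+2-j$) in that integral.
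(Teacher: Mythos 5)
Your proof is correct and follows essentially the same route as the paper: both start from the convolution representation of Proposition \ref{prop:convolution}, kill the stochastic integral in expectation, exploit the sparsity of $B$ together with the explicit form \eqref{eq:jth_element} of $e^{A(t-s)}$ to reduce to a scalar convolution against the kernel $e^{-\nu_k(t-s)}(t-s)^{\eta_k+1-j}/(\eta_k+1-j)!$, and then sandwich using $0<f_{k+1}\leq f^{\max}_{k+1}$. The only cosmetic difference is that you evaluate the kernel integral via the lower incomplete Gamma function, whereas the paper does the same computation by the substitution $z=(t-s)/t$; the resulting bracket is identical.
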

\begin{proof}[Proof of Theorem \ref{thm:moment_bounds_process}]
%We start with the first population. 
From Proposition \ref{prop:convolution} and Remark \ref{rem:expA}, it follows that the convolution-based representation of the $k$-th population is given by
\begin{align*}
X^{k}_t = (e^{At}x_0)^k+\int_0^t e^{A_{\nu_k}(t-s)} B^k(X^{k+1}_s) ds + \frac{1}{\sqrt{N}} \int_0^t e^{A_{\nu_k}(t-s)} \sigma^k(X^{k+1}_s) dW_s. \label{eq:Conv_X_1} 
\end{align*}
Consequently, the $j$-th components are given by
\begin{eqnarray*}
X^{k,j}_t &=& \underbrace{\left(e^{At} x_{0}\right)^{k,j}}_{:=T_1(t)}+\underbrace{\int_0^t c_k f_{k+1}(X^{{k+1},1}_s)  \frac{e^{-{\nu_k}(t-s)}}{(\eta_k+1-j)!} (t-s)^{\eta_k+1-j} ds}_{:=T_2(t)} \\ &+& \underbrace{\frac{1}{\sqrt{N}} \int_0^t \frac{c_k}{\sqrt{p_{k+1}}}\sqrt{f_{k+1}(X^{{k+1},1}_s)}  \frac{e^{-{\nu_{k}}(t-s)}}{(\eta_{k}+1-j)!} (t-s)^{\eta_{k}+1-j}  dW^{k+1}_s}_{:=T_3(t)}.
\end{eqnarray*}
Note that, $\mathbb{E}[T_1(t)]=T_1(t)$ and $\mathbb{E}[T_3(t)]=0$. It remains to consider $T_2(t)$. The fact that the intensity function is bounded by $0<f_{k+1}\leq f_{k+1}^{\max}$ implies that
\begin{align*}
\min\{0,c_k\}  \frac{f_{k+1}^{\max}}{(\eta_k+1-j)!} I^{k,j}  \leq \mathbb{E}[T^2(t)] \leq \max\{0,c_k\}  \frac{f_{k+1}^{\max}}{(\eta_k+1-j)!} I^{k,j} ,
\end{align*}
where 
\[
I^{k,j} = \int_0^t e^{-{\nu_k}(t-s)}  (t-s)^{\eta_k+1-j} ds. 
\]
Now, let us consider the integral $I^{k,j}$:
\begin{equation*}
     \int_0^t e^{-{\nu_k}(t-s)} (t-s)^{\eta_k+1-j} ds = t^{\eta_k+1 - j } \int_0^t e^{-{\nu_kt}\frac{t-s}{t}} \left(\frac{t-s}{t}\right)^{\eta_k+1-j} ds.
\end{equation*}   
Setting $z = \frac{t-s}{t}$ yields
\begin{equation*} 
t^{\eta_k+2 - j } \int_0^1e^{-{\nu_kt}z} z^{\eta_k+1-j}  dz =   \frac{(\eta_k+1-j)!}{\nu_k^{(\eta_k+2-j)}}\left[1-e^{-\nu_kt}\sum_{l=0}^{\eta_k+1-j}\frac{(\nu_kt)^l}{l!} \right].
\end{equation*}
This gives the result.  
\end{proof}
\begin{remark}\label{rem:bounds_process}
% The following bounds hold for the components of $\mathbb{E}[X_t]$ as  $t \to \infty$:
Recalling \eqref{eq:jth_element} and using that $\lim\limits_{t\to\infty} e^{-\nu_kt}\sum_{l=0}^{\eta_k+1-j}\frac{(t\nu_k)^l}{l!}=0$, it follows from Theorem \ref{thm:moment_bounds_process} that
\begin{equation*}
  \min\left\{0, \frac{c_k f_{k+1}^{\max}}{\nu_k^{(\eta_k+2-j)} } \right\} \leq  \lim_{t\to\infty} \mathbb{E}[X^{k,j}_t] \leq \max\left\{0, \frac{c_k f_{k+1}^{\max}}{\nu_k^{(\eta_k+2-j)} } \right\}.
\end{equation*}
\end{remark}
The derived moment bounds give some intuition on how the system behaves in the long run. Remarkably, depending on whether $c_k$ is positive or negative, the trajectories of $(X_t^k)_{t \geq 0}$ are on average bounded by $0$ from below or above, respectively. This is in agreement with the fact that the sign of $c_k$ defines whether the corresponding neural population is excitatory ($c_k = +1$) or inhibitory ($c_k = -1$). 
Moreover, we may immediately see the effect of increasing the memory order $\eta_k$, depending on the constant $\nu_k$. When $\nu_k = 1$, the bounds for all $j$ components are determined entirely by $c_k$ and the bounds of the intensity functions. When $\nu_k < 1$ and $\eta_k \to \infty$, then the first components, presenting the current state of the process, tend to infinity. Similarly, for $\nu_k > 1$, the trajectories are attracted to $0$. Finally, note that the first moment bounds do not depend on the number of neurons in the system. 
\begin{theorem}[Second moment bounds of the diffusion process]\label{thm:second_moment_bounds_process}
Grant assumption \ref{assumption:a}. The following bounds hold for $\mathbb{E}[(X^{k,j}_t)^2]$:
\begin{eqnarray*}
    \mathbb{E}[(X^{k,j}_t)^2] &\leq& \left( \left(e^{At} x_{0}\right)^{k,j} \right)^2 + 2  \left(e^{At} x_{0}\right)^{k,j}\max\left\{0,\frac{c_k f^{\max}_{k+1}}{(\eta_k+1-j)!} I^{k,j}_1(t)\right\} \\ &+& 
    f^{\max}_{k+1}\left(\frac{c_k }{(\eta_k+1-j)!}\right)^2\left(\sqrt{f^{\max}_{k+1}} \: I^{k,j}_1(t) + \sqrt{\frac{I^{k,j}_2(t)}{N\cdot p_{k+1}}} \right)^2,
\end{eqnarray*}
where $I^{k,j}_u(t), \: u = 1,2$, are defined as
\begin{eqnarray*}
    I^{k,j}_u(t) &:=& \int_0^t e^{-u\nu_k(t-s)}(t-s)^{u(\eta_k+1-j)}ds \\ &=& \frac{(u(\eta_k + 1 - j))!}{(u\nu_k)^{u(\eta_k+1-j)+1}} \left[ 1-e^{-ut\nu_k} \sum_{l=0}^{u(\eta_k+1-j)}\frac{(ut\nu_k)^l}{l!} \right].
\end{eqnarray*}
\end{theorem}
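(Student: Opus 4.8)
The plan is to square the convolution-based representation of $X^{k,j}_t$ already obtained in the proof of Theorem \ref{thm:moment_bounds_process}, namely the decomposition $X^{k,j}_t = T_1(t) + T_2(t) + T_3(t)$ into the deterministic free term $T_1 = (e^{At}x_0)^{k,j}$, the drift integral $T_2$, and the Itô integral $T_3$. Expanding the square and taking expectations gives
\begin{equation*}
\mathbb{E}[(X^{k,j}_t)^2] = T_1(t)^2 + 2T_1(t)\,\mathbb{E}[T_2(t)] + \mathbb{E}\big[(T_2(t)+T_3(t))^2\big],
\end{equation*}
where I use that $T_1$ is deterministic and that $\mathbb{E}[T_3]=0$ (the cross term $2T_1T_3$ vanishes in expectation, while $2T_2T_3$ is retained inside the last bracket). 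The term $T_1^2$ is exactly the first summand of the claimed bound.

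For the last bracket I would avoid computing the covariance of $T_2$ and $T_3$, which are not independent since both are driven by the same path $X^{k+1,1}$, and instead apply Minkowski's inequality in $L^2$, giving $\mathbb{E}[(T_2+T_3)^2]\le(\sqrt{\mathbb{E}[T_2^2]}+\sqrt{\mathbb{E}[T_3^2]})^2$. The two factors are then estimated separately. For $T_2$ the pathwise bound $0<f_{k+1}\le f^{\max}_{k+1}$ yields $|T_2|\le \frac{f^{\max}_{k+1}}{(\eta_k+1-j)!}I^{k,j}_1(t)$ deterministically, hence $\sqrt{\mathbb{E}[T_2^2]}\le \frac{f^{\max}_{k+1}}{(\eta_k+1-j)!}I^{k,j}_1(t)$. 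For $T_3$ I would apply the Itô isometry to the $dW^{k+1}$ integral and again bound $f_{k+1}\le f^{\max}_{k+1}$, producing $\mathbb{E}[T_3^2]\le \frac{f^{\max}_{k+1}}{N p_{k+1}}\big(\tfrac{1}{(\eta_k+1-j)!}\big)^2 I^{k,j}_2(t)$; here the second-order integral $I^{k,j}_2$ arises precisely because the isometry squares the kernel, doubling both the exponential rate and the power of $(t-s)$. Factoring $\sqrt{f^{\max}_{k+1}}\,\tfrac{|c_k|}{(\eta_k+1-j)!}$ out of the sum of the two factors and using $c_k^2=1$ reproduces the third summand. The closed form of $I^{k,j}_u$ follows from the same substitution $z=(t-s)/t$ and elementary integration used for $I^{k,j}$ in Theorem \ref{thm:moment_bounds_process}, now with rate $u\nu_k$ and exponent $u(\eta_k+1-j)$.

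The delicate step, and the one I expect to be the main obstacle, is the cross term $2T_1(t)\,\mathbb{E}[T_2(t)]$, since the stated bound keeps the signed prefactor $2T_1$ together with a $\max\{0,\cdot\}$ rather than simply bounding $|\mathbb{E}[T_2]|$. The key observation is that $T_2$ has a determinate sign inherited from $c_k$: because $f_{k+1}>0$ and the kernel is nonnegative, $T_2\ge 0$ when $c_k=+1$ and $T_2\le 0$ when $c_k=-1$, so $\mathbb{E}[T_2]$ lies on one side of $0$ with $\mathbb{E}[T_2]\le \max\{0,c_k\}\tfrac{f^{\max}_{k+1}}{(\eta_k+1-j)!}I^{k,j}_1(t)$. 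Combining this one-sided first-moment estimate with the sign of the initial-condition contribution $T_1=(e^{At}x_0)^{k,j}$ yields $2T_1\,\mathbb{E}[T_2]\le 2T_1\max\{0,\tfrac{c_kf^{\max}_{k+1}}{(\eta_k+1-j)!}I^{k,j}_1\}$, the middle summand; in particular it is immediate in the base model $x_0=0$, where $T_1=0$ and the cross term drops out. I would make this sign bookkeeping explicit, tracking that $(e^{At}x_0)^{k,j}$ does not flip the inequality, since it is the only place where the estimate is not a routine application of boundedness of $f_{k+1}$, the Itô isometry, and Minkowski's inequality.
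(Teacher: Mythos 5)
Your proposal follows essentially the same route as the paper's proof: the same convolution-based decomposition $X^{k,j}_t=T_1+T_2+T_3$, the same use of $0<f_{k+1}\le f^{\max}_{k+1}$ and the It\^o isometry for the quadratic terms, and the same elementary computation of $I^{k,j}_u$. Your Minkowski step $\mathbb{E}[(T_2+T_3)^2]\le \left(\sqrt{\mathbb{E}[T_2^2]}+\sqrt{\mathbb{E}[T_3^2]}\right)^2$ is just a repackaging of the paper's Cauchy--Schwarz bound on the drift--noise cross term, so nothing differs there.

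The one step you flag as delicate is indeed the weak point, and your treatment of it does not actually close: from $\mathbb{E}[T_2]\le \max\{0,c_k\}\frac{f^{\max}_{k+1}}{(\eta_k+1-j)!}I^{k,j}_1(t)$ you may multiply by $2T_1$ without flipping the inequality only when $T_1=(e^{At}x_0)^{k,j}\ge 0$; your parenthetical claim that this factor does not flip the inequality is false in general. For instance, if $c_k=-1$ and $(e^{At}x_0)^{k,j}<0$, then $T_2<0$ pathwise, so the true cross term $2T_1\mathbb{E}[T_2]$ is strictly positive, whereas the claimed middle summand is $2T_1\max\{0,\cdot\}=0$; since the discrepancy grows linearly in $|T_1|$ while the slack in the third summand is bounded, it cannot be absorbed, and the same obstruction appears for $c_k=+1$, $(e^{At}x_0)^{k,j}<0$. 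You should, however, be aware that this is not a defect of your proposal relative to the paper: the paper's own proof performs the identical unjustified multiplication (its bound on the initial-condition/drift cross term), so the stated bound really requires a sign condition such as $(e^{At}x_0)^{k,j}\ge 0$ --- automatically satisfied for the natural initial condition $x_0=0$, where the cross term vanishes, as you note. In short, you have reproduced the paper's argument faithfully, including its gap; a clean fix in both cases is to replace the middle summand by $2\max\bigl\{0,(e^{At}x_0)^{k,j}\frac{c_k f^{\max}_{k+1}}{(\eta_k+1-j)!}I^{k,j}_1(t)\bigr\}$, i.e.\ to take the maximum of the whole product rather than of the second factor alone.
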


 The proof of Theorem \ref{thm:second_moment_bounds_process} is similar to the one of Theorem \ref{thm:moment_bounds_process} and is postponed to Appendix \ref{app:second_moment}. 

\begin{remark}\label{rem:second_moment_asymp_bound_process}
Theorem \ref{thm:second_moment_bounds_process} gives the following asymptotic bounds:
\begin{multline*}
 \lim_{t\to\infty} \mathbb{E}[(X^{k,j}_t)^2] \leq f^{\max}_{k+1}\left(\frac{c_k }{(\eta_k+1-j)!}\right)^2 %\left((C^{k,j}_1)^2f^{\max}_{k+1} + 2C^{k,j}_1C^{k,j}_2\sqrt{\frac{f^{\max}_{k+1}}{N\: p_{k+1}}} + \frac{C^{k,j}_2}{N\: p_{k+1}}\right),
 \left(\sqrt{f^{\max}_{k+1}} C^{k,j}_1 + \sqrt{\frac{C^{k,j}_2}{N\cdot p_{k+1}}}\right)^2,
\end{multline*}
where 
\[
C^{k,j}_u := \lim_{t\to\infty}I^{k,j}_u(t)=\frac{(u(\eta_k + 1 - j))!}{(u\nu_k)^{u(\eta_k+1-j)+1}}.
\]
\end{remark}
Note that for $N \to \infty$, the bound obtained in Theorem \ref{thm:second_moment_bounds_process} equals the square of the bound for the first moment, derived in Theorem \ref{thm:moment_bounds_process}. This is in agreement with the fact that the stochastic system \eqref{eq:hawkes_approximation} transforms into an ODE as $N$ increases \citep{Ditlevsen2017eva}. In other words, its diffusion coefficient tends to $0$ as $N$ tends to infinity.
\begin{figure}
\begin{centering}
    \includegraphics[width=1.0\textwidth]{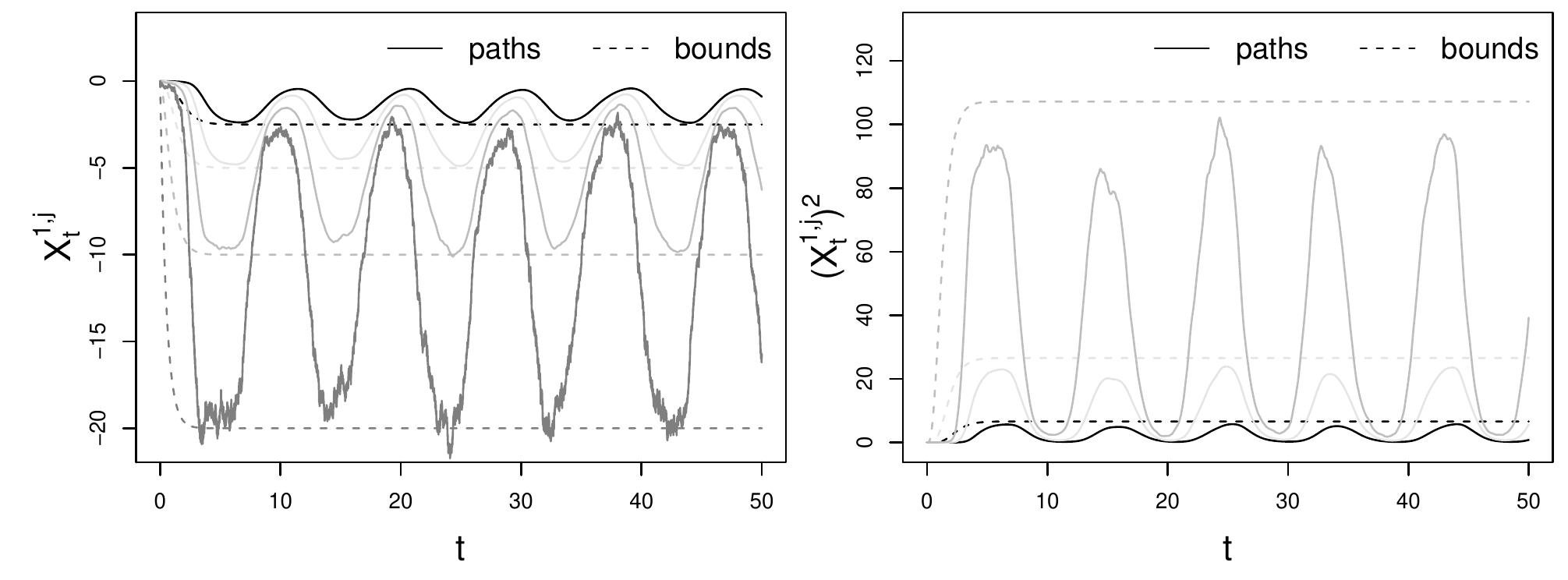}
	\caption{First (left panel) and second (right panel) moment bounds with respective trajectories of the inhibitory population $k=1$. The rate function $f_2$ is given in Section \ref{sec:simulation_study}. The parameters are $\eta_1=3$, $\nu_1=2$, $N=20$ and $p_2=1/2$.}
	\label{fig:moment_bounds}
\end{centering}
\end{figure}

In Figure \ref{fig:moment_bounds}, the first and second moment bounds, derived in Theorem \ref{thm:moment_bounds_process} and Theorem \ref{thm:second_moment_bounds_process}, respectively, are illustrated. In the left panel, we plot 4 sample trajectories (solid lines) of an inhibitory population and their lower first moment bounds (dashed lines). The main variable $X^{k,1}$ and its lower moment bound are depicted in black. The remaining 3 trajectories are auxiliary variables. They (and their corresponding bounds) are depicted in different shades of grey. We see that the trajectories can exceed the theoretical bounds, especially when the effect of noise is large. On average, the trajectories stay within the bounds.
In the right panel, we plot the square of the first 3 components of $X^{k}$ (and their second moment bounds), omitting the 4-th one in order to stay within an easily interpretative scale. We conclude that the bounds are rather precise for the parameter setting under consideration.

\section{Numerical splitting schemes for the stochastic diffusion}\label{sec:simulation_methods}
The solution of system \eqref{eq:hawkes_approximation} cannot be written in an explicit form, and thus a numerical approximation is required. Let $[0,T]$ with $T>0$ be the time interval of interest and consider the discretization  $(t_i)_{i=0,\dots, i_{\max}}$ given by $t_i = i\Delta$, where $\Delta = T/i_{\max}$.
In the following, $\tilde{X}_{t_i}$ denotes a numerical realisation of the diffusion process, evaluated at the discrete time points. 

We derive and investigate two numerical schemes based on the splitting approach. The goal of this method is to divide the equation into explicitly solvable subequations and to compose the obtained explicit solutions in a proper way. Usually, the choice of the subsystems is not unique. Here, because of the specific structure of SDE \eqref{eq:hawkes_approximation}, we split it into the subsystems
\begin{align}
dX^{[1]}_t &= AX^{[1]}_tdt,\label{eq:subsys1} \\ 
dX^{[2]}_t &= B(X^{[2]}_t)dt + \frac{1}{\sqrt N}\sigma(X^{[2]}_t)dW_t. \label{eq:subsys2}
\end{align}
Both subsystems are explicitly solvable. The first one is a linear ODE whose flow is given by $\psi^{[1]}_{t}: x\mapsto e^{A t}x$.
For the second one, recall that $B$ and $\sigma$ are given by \eqref{eq:B_vector} and \eqref{eq:diffusion_component}, respectively. It is easy to see that all components of $X^{[2]}$, except for two ($X^{[2],1,\eta_1+1}$ and $X^{[2],2,\eta_2+1}$) have null derivative. Moreover, the drift and diffusion coefficients of $X^{[2],1,\eta_1+1}$ only depend on $X^{[2],2,1}$ and vice versa.
Hence, the respective explicit (stochastic) flows are given by
\begin{align*}
\psi^{[1]}_{t}(x) &:= e^{At} x,\\ 
\psi^{[2]}_{t}(x) &:= x + t B(x) + \frac{\sqrt{t}}{\sqrt N} \sigma(x)\xi,
\end{align*}
where $\xi=(\xi^1,\xi^2)^T$ is a $2$-dimensional standard normal vector.
Then, the Lie-Trotter and the Strang compositions of flows \citep{Mclachlan2002,Strang1968} are given as follows
\begin{gather}
\label{eq:split_scheme_LT2}
\tilde X^{LT}_{t_{i+1}} = \left(\psi^{[1]}_{\Delta}\circ \psi^{[2]}_{\Delta}\right)\left(\tilde X^{LT}_{t_i}\right) = e^{A\Delta}\left( \tilde X^{LT}_{t_i} + \Delta  B(\tilde X^{LT}_{t_i}) + \frac{\sqrt{\Delta}}{\sqrt N} \sigma(\tilde X^{LT}_{t_i})\xi_i \right),
\end{gather}
\begin{align}
\label{eq:split_scheme}
\tilde X^{ST}_{t_{i+1}} &= \left(\psi^{[1]}_{\frac{\Delta}{2}}\circ \psi^{[2]}_{\Delta}\circ  \psi^{[1]}_{\frac{\Delta}{2}}\right)\left(\tilde X^{ST}_{t_i}\right) \\ &= e^{A\Delta} \tilde X^{ST}_{t_i} + \Delta  e^{A\frac{\Delta}{2}}B(e^{A\frac{\Delta}{2}}\tilde X^{ST}_{t_i}) + \frac{\sqrt{\Delta}}{\sqrt N}e^{A\frac{\Delta}{2}} \sigma(e^{A\frac{\Delta}{2}}\tilde X^{ST}_{t_i})\xi_i,
\end{align}
respectively, with $\tilde{X}^{LT}_{0} = \tilde{X}^{ST}_{0} = x_0$ and $(\xi_i)_{i=1,\dots,i_{\max}}$ i.i.d.
The two splitting schemes \eqref{eq:split_scheme_LT2} and \eqref{eq:split_scheme} define numerical solutions of SDE \eqref{eq:hawkes_approximation}. Note that by setting $\sigma(x)\equiv 0$, both schemes can be used for simulating ODE \eqref{eq:ODE}.

For the sake of simplicity, we focus on the Lie-Trotter splitting \eqref{eq:split_scheme_LT2} in the subsequent analysis, since its representation is more intuitive. Thus, throughout Section \ref{sec:simulation_methods} we set $\tilde{X}\equiv \tilde{X}^{LT}$. However, similar results can be obtained also for the more evolved Strang approach \eqref{eq:split_scheme}.  % In our numerical experiments we observe that the Strang approach performs better than the Lie-Trotter method. This is also observed in the literature, see, e.g., \cite{Ableidinger2017,Buckwar2019}. 

\begin{remark}\label{rem:hypoellipticity_scheme}
Note that thanks to the matrix exponential entering the diffusion terms in \eqref{eq:split_scheme_LT2} and \eqref{eq:split_scheme}, the noise propagates through all components of the system at each time step. In other words, the conditional variance matrix $\Sigma$ is of full rank and is given by
%\begin{equation*}
%    \Sigma\left[\tilde X_{t_{i+1}}|\tilde{\mathcal{F}}_{i} \right] := \frac{\Delta}{N} e^{A\Delta}\sigma(\tilde X_{t_i})\sigma^T(\tilde X_{t_i}) \left(e^{A\Delta}\right)^T,
%\end{equation*}
\begin{equation*}
    \Sigma\left[\tilde X_{t_{i+1}}|\tilde X_{t_i} \right] := \frac{\Delta}{N} e^{A\Delta}\sigma( \tilde X_{t_i})\sigma^T( \tilde X_{t_i}) \left(e^{A\Delta}\right)^T.
\end{equation*}
%where $(\tilde{\mathcal{F}}_{i})_{i\in 1,\dots, i_{\max}}$ is the filtration of the process $\tilde X$.
This can be regarded as a discrete analogue of the hypoellipticity of the continuous process, a property that the approximation methods based on the It\^o-Taylor expansion of the infinitesimal generator of \eqref{eq:hawkes_approximation} (see \cite{Kloeden2003}) do not preserve. %, when the values $\eta_1$ and $\eta_2$ are arbitrary large.  
\end{remark}
\subsection{Strong convergence in the mean square sense}
Now we focus on the convergence in the mean-square sense and show that the numerical solutions obtained via the splitting approach converge to the process as the time step $\Delta\to 0$ with order $1$. The frequently applied Euler-Maruyama scheme usually converges with mean-square order $1/{2}$ if the noise is multiplicative \citep{Kloeden2003,Milstein2004}, as it is the case for system \eqref{eq:hawkes_approximation}. In the following result, thanks to the specific structure of the noise component, we show that the Euler-Maruyama scheme coincides with the Milstein scheme, which is known to converge with mean-square order 1. This result is then used to establish the convergence order of the splitting scheme.
\begin{theorem}[Mean-square convergence of the splitting scheme]\label{thm:convergence}
Grant assumption \ref{assumption:a}. Let $\tilde{X}_{t_i}$ denote the numerical method defined by \eqref{eq:split_scheme_LT2} at time point $t_i$ and starting from $x_0$. Then $\tilde{X}_{t_i}$ is mean-square convergent with order $1$, i.e., there exists a constant $C>0$ such that 
\[
\left(\mathbb{E}\left[ \left\|X_{t_{i}} - \tilde X_{t_{i}}\right\|^2 \right]\right)^{\frac{1}{2}} \leq C\Delta,
\]
for all time points $t_i$, $i=1,\ldots,i_{\text{max}}$, where $\|\cdot \|$ denotes the Euclidean norm. 
\end{theorem}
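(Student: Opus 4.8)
The plan is to reduce the global mean-square estimate to a one-step (local) error analysis and then invoke the mean-square fundamental convergence theorem of Milstein and Tretyakov \cite{Milstein2004}: if the one-step error of a method, started from the exact solution at $t_i$, is of order $\Delta^{p_1}$ in the mean and of order $\Delta^{p_2}$ in the mean-square with $p_2\ge \tfrac12$ and $p_1\ge p_2+\tfrac12$, then the global mean-square error is of order $\Delta^{p_2-1/2}$. To reach global order $1$ it therefore suffices to verify $p_1=2$ and $p_2=\tfrac32$ for the Lie--Trotter flow $\psi^{[1]}_\Delta\circ\psi^{[2]}_\Delta$. The growth/moment hypotheses of that theorem are available here: because $0<f_k\le f_k^{\max}$ the nonlinear drift $B$ and the diffusion $\sigma$ are bounded, which, together with the boundedness of $e^{A\Delta}$ on $[0,T]$, furnishes the finite moments of both $X$ and the scheme that the theorem requires.

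The key structural step, as announced before the statement, is to show that for \eqref{eq:hawkes_approximation} the Euler--Maruyama scheme coincides with the Milstein scheme. The Milstein correction is built from the operators $L^{j_1}=\sum_{l}\sigma^{l,j_1}\partial_{x_l}$ acting on the entries $\sigma^{i,j_2}$. I would observe that $\sigma$ has only two nonzero entries, $\sigma^{\eta_1+1,2}$ depending solely on the coordinate $X^{2,1}$ (index $\eta_1+2$) and $\sigma^{\kappa,1}$ depending solely on $X^{1,1}$ (index $1$). Whenever $\sigma^{l,j_1}\neq 0$ one has $l\in\{\eta_1+1,\kappa\}$, and along both of these coordinate directions every entry $\sigma^{i,j_2}$ is constant; hence $L^{j_1}\sigma^{i,j_2}\equiv 0$ and the whole Milstein correction vanishes. (This uses only that $\sigma$ is constant in the two noise-carrying directions, so no regularity of $\sqrt{f_k}$ is needed at this point.) Consequently Euler--Maruyama is exactly Milstein, and the classical Milstein analysis \cite{Kloeden2003,Milstein2004} shows that the Euler--Maruyama one-step error against the exact flow already satisfies $p_1=2$, $p_2=\tfrac32$.

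It then remains to transfer these local orders to the Lie--Trotter map. I would Taylor-expand $e^{A\Delta}=I+A\Delta+O(\Delta^2)$ inside \eqref{eq:split_scheme_LT2} and compare, term by term, with one Euler--Maruyama step. The drift parts agree up to $O(\Delta^2)$, while the stochastic parts differ exactly by $\tfrac{\Delta^{3/2}}{\sqrt N}A\,\sigma(\tilde X_{t_i})\xi_i$ plus $O(\Delta^2)$ contributions; the former has mean zero and mean-square of order $\Delta^{3}$, hence $L^2$-norm of order $\Delta^{3/2}$. Thus one Lie--Trotter step and one Euler--Maruyama step differ by $O(\Delta^2)$ in the mean and $O(\Delta^{3/2})$ in the mean-square. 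By the triangle inequality applied to the two local errors (exact-vs-Euler and Euler-vs-Lie--Trotter), the Lie--Trotter one-step error against the exact flow again satisfies $p_1=2$, $p_2=\tfrac32$, and the fundamental theorem yields the claimed bound $(\mathbb{E}\|X_{t_i}-\tilde X_{t_i}\|^2)^{1/2}\le C\Delta$.

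The delicate points on which I expect to spend the most care are, first, the bookkeeping of the It\^o--Taylor remainder in the exact one-step solution so as to certify the mean-square order $\tfrac32$ precisely at the place where the vanishing of the Milstein term is exploited; and second, the regularity needed to run that expansion, since $\sigma$ contains $\sqrt{f_k}$, whose derivative behaves like $f_k'/\sqrt{f_k}$ and is only bounded if $f_k$ stays away from $0$. Under assumption \ref{assumption:a} alone this has to be handled with some care --- either by noting that the problematic derivatives act solely along directions in which $\sigma$ is constant, so they never actually enter the estimates, or by combining the boundedness of $f_k$ with a localisation argument --- and this is the step where the special cascade and degeneracy structure of the model is genuinely doing the work.
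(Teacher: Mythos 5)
Your proposal follows essentially the same route as the paper's proof: showing that the Milstein correction term vanishes for this particular $\sigma$ (so that Euler--Maruyama coincides with Milstein and has local orders $p_1=2$, $p_2=\tfrac{3}{2}$), comparing one Lie--Trotter step with one Euler--Maruyama step via the expansion of $e^{A\Delta}$ to obtain a one-step difference of $O(\Delta^2)$ in the mean and $O(\Delta^{3/2})$ in $L^2$, and concluding with the fundamental mean-square convergence theorem of Milstein and Tretyakov (Theorem 1.1 in that reference). The only notable difference is that you explicitly flag the regularity of $\sqrt{f_k}$ (whose derivative behaves like $f_k'/\sqrt{f_k}$ and is a hypothesis of the classical theorems that the paper invokes without comment); your observation that these derivatives act only along coordinate directions in which $\sigma$ is constant is precisely the structural fact that closes this point.
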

\begin{proof}[Proof of Theorem \ref{thm:convergence}]
%We provide a proof similar to the one presented in \cite{Milstein2003}.
Let us denote by $\tilde{X}^{EM}$ a numerical solution of SDE \eqref{eq:hawkes_approximation} obtained via the Euler-Maruyama method, that is
\begin{equation}\label{eq:EM_scheme}
\tilde X^{EM}_{t_{i+1}} = \tilde X^{EM}_{t_i} + \Delta\left( A\tilde X^{EM}_{t_i}+B(\tilde X^{EM}_{t_i}) \right) + \frac{\sqrt{\Delta}}{\sqrt N} \sigma(\tilde X^{EM}_{t_i})\xi_i.
\end{equation}
First, we show that the Euler-Maruyama method, when applied to system \eqref{eq:hawkes_approximation}, coincides with the Milstein scheme, which is known to converge with mean-square order $1$. To do so, we denote the vector $x$ by $x=(x^1,\ldots,x^{\kappa})$, where $\kappa=\eta_1+\eta_2+2$. Further, we recall that the $j$-th component, $j=1,\ldots,\kappa$, of the Milstein scheme only differs from the $j$-th component of the Euler-Maruyama scheme \eqref{eq:EM_scheme} by the following additional term
\begin{equation*}
    \sum_{m_1, m_2 = 1}^{2}\sum_{l=1}^{\kappa} \sigma^{l,m_1} \frac{\partial \sigma^{j,m_2}}{\partial x^l}I(m_1,m_2),
\end{equation*}
where $\sigma^{j,m}$ denotes the value of the element at the $j$-th row and the $m$-th column of the diffusion matrix $\sigma$ at time $t_i$ and 
\begin{equation*}
    I(m_1,m_2):=\int_{t_i}^{t_{i+1}} \int_{t_i}^{{s_2}} dW_{s_1}^{m_1} dW_{s_2}^{m_2}.
\end{equation*}
Now note that the term ${\partial \sigma^{j,m_2}}/{\partial x^l}$ is only different from $0$ for $j=\eta_1+1$, $m_2=1$, $l=\eta_1+2$ and for $j=\eta_1+\eta_2+2$, $m_1=2$, $l=1$. However, $\sigma^{l,m_1}$ equals $0$ for those values of $l$. Thus, the above double sum equals $0$ and the Euler-Maruyama method coincides with the Milstein scheme.
This implies that 
\begin{equation}\label{proof:conv_EM}
     \| X_{t_i}-\tilde{X}^{EM}_{t_i} \|_{L^2} \leq C \Delta,
\end{equation}
where $\|\cdot \|_{L^2}:=\left( \mathbb{E}[ \|\cdot \|^2 ] \right)^{1/2}$ denotes the $L^2$-norm and $C$ is some generic constant.
For the second part, we provide a proof similar to the one presented in \cite{Milstein2003}. Applying the triangle inequality yields that
\begin{equation*}
    \| X_{t_i}-\tilde{X}_{t_i} \|_{L^2} \leq  \| X_{t_i}-\tilde{X}^{EM}_{t_i} \|_{L^2} + \| \tilde{X}^{EM}_{t_i}-\tilde{X}_{t_i} \|_{L^2}.
\end{equation*}
Given $X_{t_i}:=x$, let us denote with $\tilde{X}^{EM}_{t_{i+1}}(x,t_i)$ and $\tilde{X}_{t_{i+1}}(x,t_i)$ the one-step approximation of the Euler-Maruyama and splitting scheme, respectively. For instance, $\tilde{X}^{EM}_{t_{i+1}}(x,t_i)$ is given by Equation \eqref{eq:EM_scheme} with $\tilde X^{EM}_{t_i}$ replaced by $x$.
By the definition of the matrix exponent, i.e., $e^{A\Delta} := I + \Delta A + \frac{\Delta^2}{2} A^2 + O(\Delta^3)$, and by recalling \eqref{eq:split_scheme_LT2}, we obtain that

\begin{align*}
     \tilde X^{EM}_{t_{i+1}}(x,t_i)- \tilde X_{t_{i+1}}(x,t_i) &= x  + \Delta A x +   \Delta  B( x) + \frac{\sqrt{\Delta}}{\sqrt N} \sigma(x)\xi_i \\ &-  e^{A\Delta}\left( x + \Delta  B( x) + \frac{\sqrt{\Delta}}{\sqrt N} \sigma(x)\xi_i \right) \\ &= x  + \Delta Ax + \Delta  B( x) + \frac{\sqrt{\Delta}}{\sqrt N} \sigma(x)\xi_i \\ &-   x - \Delta  B( x) - \frac{\sqrt{\Delta}}{\sqrt N} \sigma(x)\xi_i \\ &-
    \Delta Ax - \Delta^2 A B( x) -  \frac{\Delta^{\frac{3}{2}}}{\sqrt N} \sigma(x)\xi_i + O(\Delta^3) \\ &=  - \Delta^2 A B( x) -  \frac{\Delta^{\frac{3}{2}}}{\sqrt N} \sigma(x)\xi_i + O(\Delta^3)
\end{align*}
Consequently, we get that
\begin{align*}
  \left\| \mathbb{E}\left[\tilde X^{EM}_{t_{i+1}}(x,t_i)- \tilde X_{t_{i+1}}(x,t_i) \right] \right\|=O(\Delta^2) , \\ \quad \left\|\tilde X^{EM}_{t_{i+1}}(x,t_i)- \tilde X_{t_{i+1}}(x,t_i)  \right\|_{L^2} = O( \Delta^{\frac{3}{2}}).
\end{align*}
Let us mention that the two bounds above do not depend on $x$ because $B$ and $\sigma$ are uniformly bounded.
Recalling \eqref{proof:conv_EM}, the result follows from the fundamental theorem on the mean-square order of convergence, see Theorem 1.1. in \cite{Milstein2004}.
\end{proof}
Theorem \ref{thm:convergence} states that as $\Delta\to 0$, the approximated solution $(\tilde X_{t_i})_{i=0,\ldots,i_{\text{max}}}$ converges to the true process $(X_t)_{t\geq 0}$ in the mean-square sense. 
In practice, however, fixed time steps $\Delta>0$ are required. Thus, there is not yet any guarantee that the constructed numerical solutions share the same properties as the true solution of \eqref{eq:hawkes_approximation}. 
For these reasons, in addition, we study the ability of $(\tilde X_{t_i})_{i=0,\ldots,i_{\text{max}}}$ to preserve the properties of SDE \eqref{eq:hawkes_approximation}. 

Note also that, different to ODE systems \citep{Hairer2006}, for stochastic equations the theoretical order of convergence usually cannot be increased by using the Strang composition instead of the Lie-Trotter approach. In practice, however, the Strang splitting often performs better than the Lie-Trotter method, see, e.g., \cite{Ableidinger2017,Buckwar2019}. This is also confirmed by our numerical experiments in Section \ref{sec:simulation_study}.

\subsection{Moment bounds of the approximated process} 
We are now interested in studying the qualitative properties of the splitting schemes for fixed time steps $\Delta>0$. We start by illustrating that the constructed splitting schemes preserve the convolution-based structure of the model derived in Proposition \ref{prop:convolution}. Using the one-step approximation \eqref{eq:split_scheme_LT2} and performing back iteration yields
\begin{equation}\label{eq:conv_splitting}
\tilde X_{t_i} = e^{At_i} x_0 + \Delta  \sum_{l=1}^{i} e^{At_l}B(\tilde X_{t_{i-l}}) + \frac{\sqrt{\Delta}}{\sqrt N}\sum_{l=1}^{i} e^{At_l} \sigma(\tilde X_{t_{i-l}})\xi_{i-l}.
\end{equation}
Note that the first term on the right side of \eqref{eq:Conv_X} is preserved exactly. Moreover, the sums in \eqref{eq:conv_splitting} correspond to approximations of the integrals in \eqref{eq:Conv_X} using the left point rectangle rule. 
Expression \eqref{eq:conv_splitting} allows to derive moment bounds for the numerical process in a similar fashion as presented for the continuous process in the previous section.
\begin{theorem}[First moment bounds of the approximated process]\label{thm:moment_bounds_scheme} 
Grant assumption \ref{assumption:a}. 
The following bounds hold for the components of $\mathbb{E}[\tilde X_{t_i}]$:
\begin{equation*}
  \tilde{\mathcal{I}}^{k,j}_{\min} \leq  \mathbb{E}[\tilde X^{k,j}_{t_i}] \leq \tilde{\mathcal{I}}^{k,j}_{\max},
\end{equation*}
where 
\begin{align*}
    \tilde{\mathcal{I}}^{k,j}_{\min} &= \left(e^{A{t_i}} x_0 \right)^{k,j} + \Delta \sum_{l=0}^{i} 
    e^{-{\nu_k}t_l} t_l^{\eta_k+1-j}\min\left\{0, \frac{c_k f_{k+1}^{\max}}{(\eta_k+1-j)! } \right\} \\ 
    \tilde{\mathcal{I}}^{k,j}_{\max} &= \left(e^{At_i}  x_0\right)^{k,j} + \Delta \sum_{l=0}^{i} 
    e^{-{\nu_k}t_l} t_l^{\eta_k+1-j}\max\left\{0, \frac{c_k f_{k+1}^{\max}}{(\eta_k+1-j)! } \right\}.
\end{align*}
\end{theorem}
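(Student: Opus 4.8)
The plan is to mirror the proof of Theorem~\ref{thm:moment_bounds_process}, replacing the continuous convolution representation by its discrete counterpart \eqref{eq:conv_splitting}. First I would isolate the $(k,j)$-th component of $\tilde X_{t_i}$. Since the nonlinear drift $B$ and the diffusion $\sigma$ each vanish on all coordinates of a population block except the last one (the $(\eta_k+1)$-th), the summation over $m$ in formula \eqref{eq:jth_element} of Remark~\ref{rem:expA} collapses to the single term $m=\eta_k+1$. Concretely, $\left(e^{At_l}B(\tilde X_{t_{i-l}})\right)^{k,j} = e^{-\nu_k t_l}\frac{t_l^{\eta_k+1-j}}{(\eta_k+1-j)!}\,c_k f_{k+1}(\tilde X^{k+1,1}_{t_{i-l}})$, and the stochastic contribution has the analogous single-entry form. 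This yields the scalar representation
\[
\tilde X^{k,j}_{t_i} = \left(e^{At_i}x_0\right)^{k,j} + \Delta\sum_{l=1}^{i} e^{-\nu_k t_l}\frac{t_l^{\eta_k+1-j}}{(\eta_k+1-j)!}\,c_k f_{k+1}(\tilde X^{k+1,1}_{t_{i-l}}) + (\text{stochastic term}).
\]

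Next I would take expectations. The deterministic term is unchanged, and the stochastic term is centered: in the discrete convolution the factor $\tilde X_{t_{i-l}}$ is measurable with respect to $\sigma(\xi_0,\dots,\xi_{i-l-1})$, while $\xi_{i-l}$ is an independent, centered standard normal vector, so each summand satisfies $\mathbb{E}[\sigma(\tilde X_{t_{i-l}})\xi_{i-l}] = \mathbb{E}[\sigma(\tilde X_{t_{i-l}})]\,\mathbb{E}[\xi_{i-l}] = 0$. Hence only the drift sum survives, giving $\mathbb{E}[\tilde X^{k,j}_{t_i}] = \left(e^{At_i}x_0\right)^{k,j} + \Delta\sum_{l=1}^{i} e^{-\nu_k t_l}\frac{t_l^{\eta_k+1-j}}{(\eta_k+1-j)!}\,c_k\,\mathbb{E}[f_{k+1}(\tilde X^{k+1,1}_{t_{i-l}})]$.

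Then I would bound the sum term by term. Each kernel coefficient $e^{-\nu_k t_l}t_l^{\eta_k+1-j}/(\eta_k+1-j)!$ is nonnegative, and assumption~\ref{assumption:a} gives $0 < f_{k+1} \le f_{k+1}^{\max}$, so that $\min\{0,c_k f_{k+1}^{\max}\} \le c_k\,\mathbb{E}[f_{k+1}(\cdot)] \le \max\{0,c_k f_{k+1}^{\max}\}$. Multiplying by the nonnegative kernel and summing over $l$ produces exactly $\tilde{\mathcal{I}}^{k,j}_{\min}$ and $\tilde{\mathcal{I}}^{k,j}_{\max}$, after pulling the constant $1/(\eta_k+1-j)!$ inside the $\min/\max$. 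The extension of the sum so that it starts at $l=0$ is harmless, since the $l=0$ summand contains $t_0^{\eta_k+1-j}=0$ whenever $j\le\eta_k$ and otherwise only loosens the bound.

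The computation is almost entirely routine once the continuous-case argument is in hand; the one point requiring care is the centering of the discrete stochastic convolution, that is, verifying the adaptedness and independence bookkeeping that makes every noise term drop out in expectation. Crucially, there is no Grönwall or fixed-point step here, because \eqref{eq:conv_splitting} is an explicit closed form, so — unlike in the convergence analysis — the bound follows directly from monotonicity of the expectation against a sign-definite kernel.
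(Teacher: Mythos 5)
Your proposal is correct and follows essentially the same route as the paper's proof: isolate the $(k,j)$-th component from the discrete convolution representation \eqref{eq:conv_splitting}, observe that the noise sum is centered in expectation, and bound the surviving drift sum using $0 < f_{k+1} \leq f_{k+1}^{\max}$ and the sign of $c_k$. You are in fact slightly more careful than the paper on two minor points it leaves implicit, namely the adaptedness argument showing $\mathbb{E}[\sigma(\tilde X_{t_{i-l}})\xi_{i-l}]=0$ and the harmlessness of starting the sum at $l=0$ rather than $l=1$.
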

\begin{proof}[Proof of Theorem \ref{thm:moment_bounds_scheme}]
From Remark \ref{rem:expA} and  \eqref{eq:conv_splitting}, it follows that
\begin{align*}\label{rem:Conv_tilde_X_1}
\tilde{X}^{k}_{t_i} = (e^{At_i}x_0)^k+\Delta \sum_{l=1}^{i} e^{A_{\nu_k}t_l} B^k(\tilde{X}^{k+1}_{t_{i-l}}) + \frac{\sqrt{\Delta}}{\sqrt{N}} \sum_{l=1}^{i} e^{A_{\nu_k}t_l} \sigma^k(\tilde{X}^{k+1}_{t_{i-l}}) \xi_{i-l}.
\end{align*}
Consequently, the $j$-th components are given by
\begin{align*}
\tilde{X}^{k,j}_{t_i} &= \underbrace{(e^{At_i}x_0)^{k,j}}_{=T_1(t_i)}+\underbrace{ \frac{1}{(\eta_k+1-j)!} \Delta \sum_{l=1}^{i}  c_k f_{k+1}(\tilde{X}^{{k+1},1}_{t_{i-l}}) e^{-{\nu_k}t_l}  t_l^{\eta_k+1-j}}_{:=\tilde{T}_2(t)} \\ &+ \underbrace{\frac{1}{(\eta_k+1-j)!} \frac{\sqrt{\Delta}}{\sqrt{N}}  \sum_{l=1}^{i} \frac{c_k}{\sqrt{p_{k+1}}}\sqrt{f_{k+1}(\tilde{X}^{{k+1},1}_{t_{i-l}})} e^{-{\nu_k}t_l}  t_l^{\eta_k+1-j} \xi^{k+1}_{i-l}}_{:=\tilde{T}_3(t_i)}.
\end{align*}
Note that, $\mathbb{E}[T_1(t_i)]=T_1(t_i)$,  and $\mathbb{E}[\tilde{T}_3(t)]=0$. The fact that the intensity function is bounded by $0<f_{k+1}\leq f_{k+1}^{\max}$  implies the result.  
\end{proof}
Note that the bounds obtained in Theorem \ref{thm:moment_bounds_scheme} equal those derived in Theorem \ref{thm:moment_bounds_process}, up to replacing the integrals (calculated in the proof of Theorem \ref{thm:moment_bounds_process}) by left Riemann sums.  The accuracy of this approximation depends on the step size $\Delta$. Under reasonably small choices of $\Delta$, the bounds are preserved accurately for all $t_i$. This is illustrated in the left panel of Figure \ref{fig:bounds_continuous_and_approximated}, where we plot the  first moment bound of the process (main variable of an excitatory population) and the one of the approximated process, derived in Theorem \ref{thm:moment_bounds_process} and Theorem \ref{thm:moment_bounds_scheme}, respectively. Different choices of $\nu_k$ are compared and for the bound of the approximated process $\Delta=0.1$ is used.

The following Corollary gives an intuition of the long-time behaviour of the bounds.
\begin{corr}\label{cor:asymptotic_bounds_splitting}
(i) The following bounds hold for the components of $\mathbb{E}[\tilde{X}_{t_i}]$ as $i \to \infty$ (and $\Delta$ fixed):
\begin{multline*}
\Delta^{\kappa^{k,j}+1}  Li_{-\kappa^{k,j}}\left(e^{-\nu_k\Delta} \right)\min\left\{0, \frac{f_{k+1}^{max}c_k }{\kappa^{k,j}!} \right\} \leq \lim_{i\to\infty} \mathbb{E}[\tilde{X}^{k,j}_{t_i}] \\ \leq \Delta^{\kappa^{k,j}+1} Li_{-\kappa^{k,j}}\left(e^{-\nu_k\Delta} \right)\max\left\{0,\frac{f_{k+1}^{max}c_k}{\kappa^{k,j}!} \right\},
\end{multline*}
where $\kappa^{k,j} := \eta_k+1-j$ and $Li_{-\kappa^{k,j}}\left(e^{-\nu_k\Delta} \right)$ is a polylogarithm function, which can be written as
\begin{equation*}
    Li_{-\kappa^{k,j}}\left(e^{-\nu_k\Delta} \right) = \left(-1 \right)^{\kappa^{k,j}+1} \sum_{l=0}^{\kappa^{k,j}}l!\: S(\kappa^{k,j}+1 ,l+1)\left(\frac{-1}{1-e^{-\nu_k\Delta}}  \right)^{l+1},
\end{equation*}
where $S(\kappa^{k,j}+1 ,l+1)$ denotes the Stirling numbers of second kind (\cite{Rennie1969}). 

(ii) The following bounds hold for the components of $\mathbb{E}[\tilde{X}_{t_i}]$ as $i \to \infty$ and $\Delta \to 0$:
\begin{equation*}
 \min\left\{0, \frac{c_kf_{k+1}^{max}}{\nu^{\kappa^{k,j}+1}_k}\right\} \leq \lim_{\Delta\to 0} \lim_{i\to\infty} \mathbb{E}[\tilde{X}^{k,j}_{t_i}] \leq \max \left\{0, \frac{c_kf_{k+1}^{max}}{\nu^{\kappa^{k,j}+1}_k}\right\}.
\end{equation*}
\end{corr}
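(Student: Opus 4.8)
The plan is to read off the closed-form expressions from Theorem \ref{thm:moment_bounds_scheme} and evaluate the two iterated limits directly. Writing $\kappa^{k,j} = \eta_k+1-j$ (as in the statement) and $t_l = l\Delta$, the bound $\tilde{\mathcal{I}}^{k,j}_{\max}$ is the sum of the boundary term $(e^{At_i}x_0)^{k,j}$ and the constant $\max\{0, c_k f_{k+1}^{\max}/\kappa^{k,j}!\}$ times $\Delta\sum_{l=0}^{i} e^{-\nu_k t_l} t_l^{\kappa^{k,j}}$, and symmetrically for $\tilde{\mathcal{I}}^{k,j}_{\min}$. For part (i) I would first observe that the boundary term vanishes as $i \to \infty$: by \eqref{eq:jth_element}, $(e^{At_i}x_0)^{k,j}$ equals $e^{-\nu_k t_i}$ times a polynomial in $t_i$, which tends to $0$ because $\nu_k > 0$. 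Factoring $\Delta^{\kappa^{k,j}+1}$ out of the remaining sum and setting $z = e^{-\nu_k\Delta} \in (0,1)$ turns it into $\Delta^{\kappa^{k,j}+1}\sum_{l\geq 0} l^{\kappa^{k,j}} z^l$, a convergent series which is by definition $\Delta^{\kappa^{k,j}+1}\,Li_{-\kappa^{k,j}}(e^{-\nu_k\Delta})$; this establishes the first pair of inequalities.

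The explicit Stirling-number form of $Li_{-\kappa^{k,j}}$ then follows from the classical identity for sums $\sum_{l\geq 0} l^{n} z^{l}$. I would derive it by applying the Euler operator $\theta = z\,\frac{d}{dz}$ repeatedly to the geometric series $\sum_{l\geq 0} z^l = (1-z)^{-1}$, since each application of $\theta$ inserts one factor $l$, so that $\sum_{l\geq 0} l^{n} z^{l} = \theta^{n}(1-z)^{-1}$; expanding $\theta^{n}$ in terms of the Stirling numbers of the second kind produces precisely the stated expression, including the prefactor $(-1)^{\kappa^{k,j}+1}$ and the argument $-1/(1-e^{-\nu_k\Delta})$. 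Alternatively one simply invokes the identity as cited from \cite{Rennie1969}. Checking against the base cases $\kappa^{k,j}\in\{0,1\}$ fixes the index conventions.

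For part (ii) I would let $\Delta \to 0$ in the bound just obtained. The transparent route is to recognize $\Delta\sum_{l\geq 0} e^{-\nu_k t_l} t_l^{\kappa^{k,j}}$ as a left-endpoint Riemann sum for the convergent integral $\int_0^\infty e^{-\nu_k s} s^{\kappa^{k,j}} ds = \kappa^{k,j}!/\nu_k^{\kappa^{k,j}+1}$, which was already computed in the proof of Theorem \ref{thm:moment_bounds_process}; since the integrand is positive and integrable, the Riemann sums converge to the integral as $\Delta\to 0$. Equivalently, one may use the asymptotics $Li_{-n}(e^{-x}) \sim n!\,x^{-(n+1)}$ as $x\to 0^+$ with $x = \nu_k\Delta$, giving $\Delta^{\kappa^{k,j}+1}\,Li_{-\kappa^{k,j}}(e^{-\nu_k\Delta}) \to \kappa^{k,j}!/\nu_k^{\kappa^{k,j}+1}$, which reproduces exactly the continuous-time bound of Remark \ref{rem:bounds_process}.

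The only genuinely delicate step is the Stirling-number identity in part (i); the rest is bookkeeping. I would therefore spend most care on matching indices so that the factor $l!\,S(\kappa^{k,j}+1,l+1)$ and the sign $(-1)^{\kappa^{k,j}+1}$ come out correctly, and on justifying the Riemann-sum convergence in part (ii), which requires only positivity and integrability of $s\mapsto e^{-\nu_k s}s^{\kappa^{k,j}}$, both immediate since $\nu_k>0$.
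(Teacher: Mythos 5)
Your proposal is correct. For part (i) you follow essentially the paper's own route: drop the boundary term, factor out powers of $\Delta$, and recognize the series $\sum_{l\geq 0} l^{\kappa^{k,j}} z^l$ with $z=e^{-\nu_k\Delta}$ as $Li_{-\kappa^{k,j}}(e^{-\nu_k\Delta})$ (the paper simply cites \cite{Wood1992} for the Stirling-number form, where you offer the Euler-operator derivation $\theta^n(1-z)^{-1}$ as an alternative; both are fine). You even do slightly more than the paper, which leaves implicit the fact that $(e^{At_i}x_0)^{k,j}\to 0$ as $i\to\infty$. For part (ii), however, you take a genuinely different path. The paper expands the Stirling-number representation, uses $\lim_{\Delta\to 0}\nu_k\Delta/(1-e^{-\nu_k\Delta})=1$, and shows that only the top term $m=\kappa^{k,j}$ (with $S(\kappa^{k,j}+1,\kappa^{k,j}+1)=1$) survives, all lower-order terms carrying extra factors of $\Delta$. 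You instead view $\Delta\sum_{l\geq 0}e^{-\nu_k t_l}t_l^{\kappa^{k,j}}$ as a left Riemann sum for $\int_0^\infty e^{-\nu_k s}s^{\kappa^{k,j}}ds=\kappa^{k,j}!/\nu_k^{\kappa^{k,j}+1}$, which bypasses the Stirling identity entirely and ties the discrete limit directly to the continuous-time bound of Remark \ref{rem:bounds_process}; this is more elementary and makes the consistency with Theorem \ref{thm:moment_bounds_process} transparent. One small imprecision: positivity and integrability alone do not guarantee that Riemann sums over $[0,\infty)$ converge to the improper integral (an integrable function with narrow spikes at the lattice points $l\Delta$ is a counterexample); you additionally need some regularity such as bounded variation, which holds here since $s\mapsto e^{-\nu_k s}s^{\kappa^{k,j}}$ is smooth, unimodal and has finite total variation $V$, giving the bound $\bigl|\Delta\sum_{l\geq 0}g(l\Delta)-\int_0^\infty g(s)\,ds\bigr|\leq \Delta V\to 0$. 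With that justification supplied (or with your alternative appeal to the asymptotics $Li_{-n}(e^{-x})\sim n!\,x^{-(n+1)}$ as $x\to 0^+$, which is essentially the paper's computation packaged as a known fact), the argument is complete.
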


\begin{proof}
\textbf{(i)} The zero bound is trivial.  Considering
\begin{align*}
    \lim_{i\to\infty}\sum_{l=0}^{i}e^{-{\nu_k}t_l} t_l^{\kappa^{k,j}} &= \lim_{i\to\infty}\sum_{l=0}^{i}e^{-{\nu_k}l\Delta } (l\Delta)^{\kappa^{k,j}} \\ &=  \lim_{i\to\infty}\Delta^{\kappa^{k,j}}\sum_{l=0}^{i}e^{-{\nu_k}l\Delta} l^{\kappa^{k,j}} = \Delta^{\kappa^{k,j}} Li_{-\kappa^{k,j}}\left(e^{-\nu_k\Delta} \right)
\end{align*}
gives the result. The explicit form of the function is given in \cite{Wood1992}.

\textbf{(ii)} Let us rewrite once again the expression included in the limit:
\begin{eqnarray*}
   &\left(-1 \right)^{\kappa^{k,j}+1}& \lim_{\Delta\to 0} \frac{(\nu_k\Delta)^{\kappa^{k,j}+1}}{\nu^{\kappa^{k,j}+1}_k\kappa^{k,j}!} \sum_{m=0}^{\kappa^{k,j}} m!\: S(\kappa^{k,j}+1 ,m+1)\left(\frac{-1}{1-e^{-\nu_k\Delta}} \right)^{m+1} \\  &=& \lim_{\Delta\to 0} \left[\frac{S(\kappa^{k,j}+1 ,\kappa^{k,j}+1)\kappa^{k,j}!}{\nu^{\kappa^{k,j}+1}_k\kappa^{k,j}!} \left(\frac{\nu_k\Delta}{1-e^{-\nu_k\Delta}} \right)^{\kappa^{k,j}+1} \right. \\  &-&  \left. \Delta \frac{S(\kappa^{k,j}+1 ,\kappa^{k,j})(\kappa^{k,j}-1)!}{\nu^{\kappa^{k,j}}_k\kappa^{k,j}!} \left(\frac{\nu_k\Delta}{1-e^{-\nu_k\Delta}} \right)^{\kappa^{k,j}} + O(\Delta^2) \right].
\end{eqnarray*}
Note that $\lim_{\Delta\to 0} \left(\frac{\nu_k\Delta}{1-e^{-\nu_k\Delta}} \right) = 1$. This implies that in the limit ${1}/{\nu^{\kappa^{k,j}+1}_k}$ is the only remaining term, since the rest converges to $0$ as $\Delta\to 0$. This gives the result.
\end{proof}
\begin{figure}
\begin{centering}
    \includegraphics[width=0.48\textwidth]{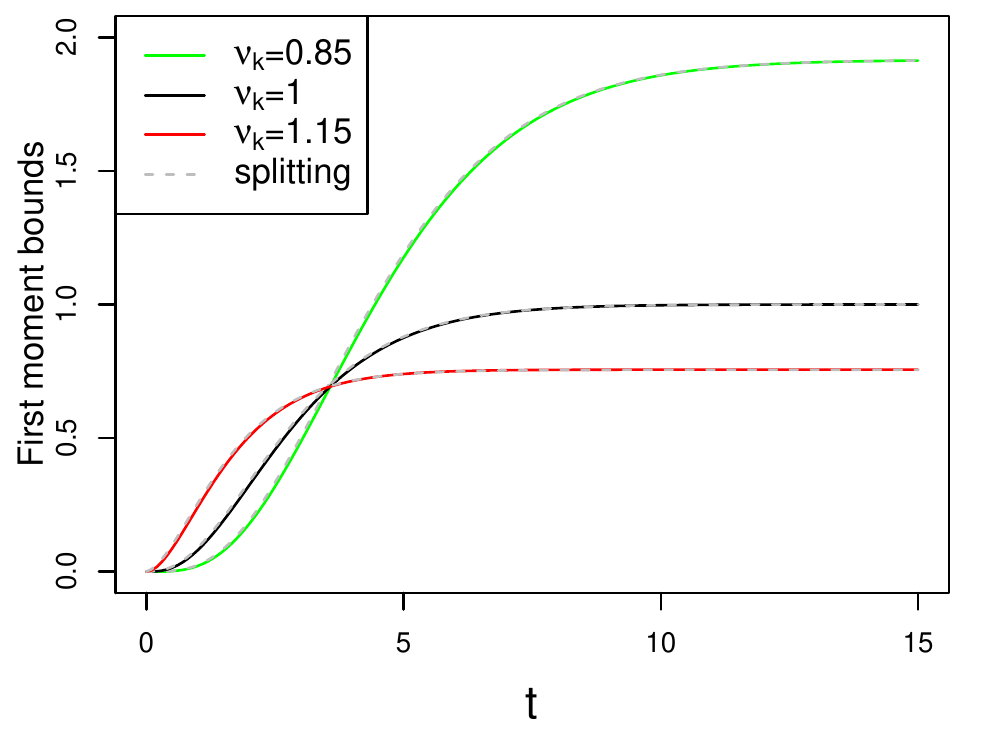}	
    \includegraphics[width=0.48\textwidth]{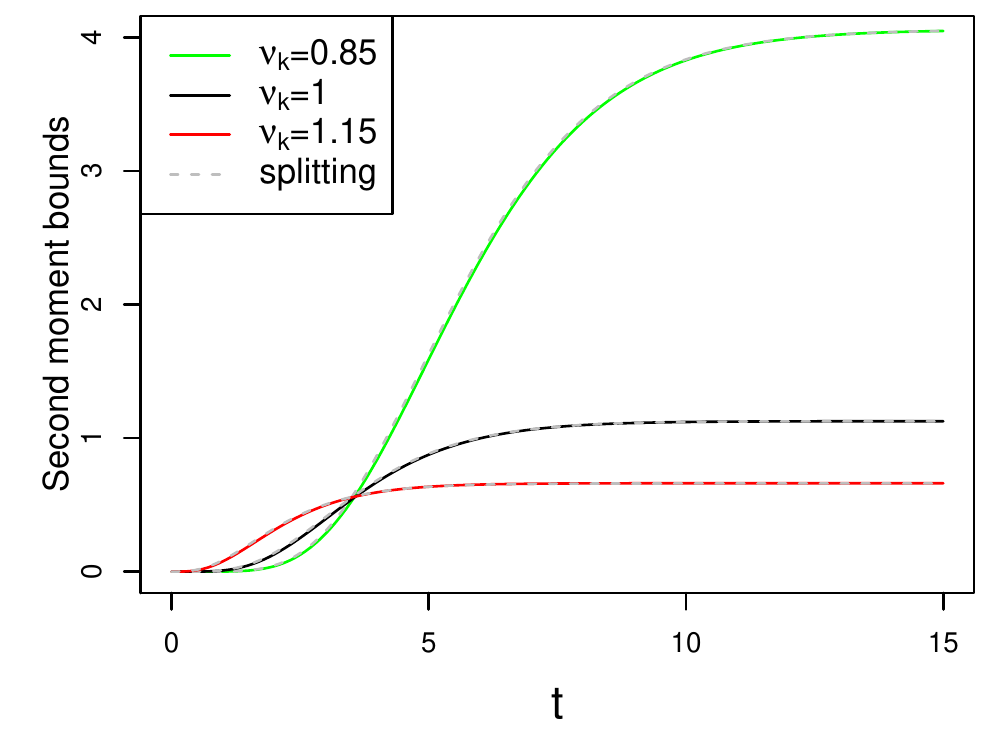}	
	\caption{%First (left panel) and second (right panel) moment bounds of an excitatory population $k$ for $\eta_k=3$, $c_k=1$, $N=100$, $p_{k+1=1/2$, $f^{\text{max}}_{k+1}=1$ and different values of $\nu_k$. For the discrete bounds obtained under the splitting scheme (dashed grey lines) the time step $\Delta=0.1$ is used.}
	First (left panel) and second (right panel) moment bounds of the excitatory population $k=2$ for different values of $\nu_2$. The moment bounds for the diffusion are in solid lines and the moment bounds for the splitting scheme are in dashed lines. The bound of the rate function is fixed to $f_1^{\max} = 1$. The parameters are $\eta_2=3$, $N=100$, $p_{1}=1/2$ and the time step $\Delta=0.1$ is used.
	}
	\label{fig:bounds_continuous_and_approximated}
\end{centering}
\end{figure}
In the first part of Corollary \ref{cor:asymptotic_bounds_splitting}, the sums in Theorem \ref{thm:moment_bounds_scheme} are calculated explicitly as $i \to \infty$. This limit is described by polylogarithm functions. Note that the zero bounds derived in Remark \ref{thm:moment_bounds_process}, i.e., the upper bounds for the inhibitory population ($c_k = -1$) and the lower bounds for the excitatory population ($c_k = +1$) are preserved exactly by the splitting scheme for all times $t_i$ and for all choices of $\Delta>0$. Moreover, the lower bounds for the inhibitory population and the upper bounds for the excitatory population are preserved accurately as $i \to \infty$, provided that $\Delta$ is reasonably small. Indeed,  %This is illustrated in the left panel of Figure \ref{fig:bounds_continuous_and_approximated}, where we plot the  first moment bound of the process (main variable of an excitatory population) against the one of the approximated process, derived in Theorem \ref{thm:moment_bounds_process} and Theorem \ref{thm:moment_bounds_scheme}, respectively. Different choices of $\nu_k$ are compared and for the approximated process $\Delta=0.1$ is used.}
% Moreover, the lower bound for the first population and the upper bound for the second population are preserved accurately for all $t_i$, if the step size $\Delta$ is reasonable small. 
as $i\to\infty$ and $\Delta\to 0$ (second part of Corollary \ref{cor:asymptotic_bounds_splitting}), the bounds coincide with the ones obtained in Remark \ref{rem:bounds_process}. % \textcolor{green}{The asymptotic behaviour for fixed $\Delta$ is less intuitive because of the polylogarithm function involved in the expression.} \textcolor{red}{I would remove the previous sentence.} 
%Thus, we illustrate graphically the paths of both exact and approximated bounds for a fixed step size $\Delta = 0.1$ and different sets of parameters on the left panel of Figure \ref{fig:bounds_continuous_and_approximated}.
\begin{theorem}[Second moment bounds of the approximated process]\label{thm:second_moment_splitting}
Grant assumption \ref{assumption:a}. 
Each component of $\mathbb{E}[(\tilde X^k_t)^2]$ is bounded by
\begin{align*}
    \mathbb{E}[(\tilde X^{k,j}_t)^2] &\leq \left( \left(e^{At}  \ x_0\right)^{k,j} \right)^2 + 2  \left(e^{At} x_0\right)^{k,j}\max\left\{0,\frac{c_k f^{\max}_{k+1}}{(\eta_k+1-j)!} \tilde I^{k,j}_1(t)\right\} \\ &+ 
    f^{\max}_{k+1}\left(\frac{c_k }{(\eta_k+1-j)!}\right)^2\left(\sqrt{f^{\max}_{k+1}} \: \tilde I^{k,j}_1(t) + \sqrt{\frac{\tilde I^{k,j}_2(t)}{N\cdot p_{k+1}}}\right)^2,
\end{align*}
where $\tilde I^{k,j}_u(t), \: u = 1,2$, are defined as
\begin{equation*}
    \tilde I^{k,j}_u(t) := \Delta\sum_{l=0}^i e^{-u\nu_kt_l}t_l^{u(\eta_k+1-j)}.
\end{equation*}
\end{theorem}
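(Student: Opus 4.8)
The plan is to transcribe the proof of Theorem \ref{thm:second_moment_bounds_process} into the discrete setting, replacing the It\^o isometry by a discrete orthogonality relation for the Gaussian increments and the time integrals $I^{k,j}_u$ by the Riemann sums $\tilde I^{k,j}_u$. I would start from the componentwise decomposition $\tilde X^{k,j}_{t_i} = T_1(t_i) + \tilde T_2(t_i) + \tilde T_3(t_i)$ already established in the proof of Theorem \ref{thm:moment_bounds_scheme} via the back-iterated convolution formula \eqref{eq:conv_splitting}, where $T_1 = (e^{At_i}x_0)^{k,j}$ is deterministic, $\tilde T_2$ is the $\Delta$-weighted sum carrying the $c_k f_{k+1}$ factors, and $\tilde T_3$ is the $\xi$-weighted stochastic sum. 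Expanding the square and taking expectations, then using that $T_1$ is deterministic and $\mathbb{E}[\tilde T_3]=0$, reduces the claim to bounding the three pieces $T_1^2$, the cross term $2T_1\mathbb{E}[\tilde T_2]$, and $\mathbb{E}[(\tilde T_2+\tilde T_3)^2]$.

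For the cross term I would use $0 < f_{k+1}\le f^{\max}_{k+1}$ together with the nonnegativity of every weight $e^{-\nu_k t_l}t_l^{\eta_k+1-j}$: since the sign of $\mathbb{E}[\tilde T_2]$ coincides with that of $c_k$, this yields $2T_1\mathbb{E}[\tilde T_2]\le 2T_1\max\{0, c_k f^{\max}_{k+1}\tilde I^{k,j}_1/(\eta_k+1-j)!\}$, which is exactly the middle summand of the claimed bound (as in the continuous case, here one uses that $e^{At}$ has nonnegative entries so that the sign bookkeeping closes). The term $\mathbb{E}[(\tilde T_2+\tilde T_3)^2]$ I would control by Minkowski's inequality, $\mathbb{E}[(\tilde T_2+\tilde T_3)^2]\le (\|\tilde T_2\|_{L^2}+\|\tilde T_3\|_{L^2})^2$, which absorbs the (unknown) correlation between the drift-type and noise-type sums into a single Cauchy--Schwarz step. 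The deterministic pointwise bound $|\tilde T_2|\le |c_k|f^{\max}_{k+1}\tilde I^{k,j}_1/(\eta_k+1-j)!$ then gives $\|\tilde T_2\|_{L^2}\le \sqrt{f^{\max}_{k+1}}\,\tfrac{|c_k|\sqrt{f^{\max}_{k+1}}}{(\eta_k+1-j)!}\tilde I^{k,j}_1$, matching the first term inside the square.

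The estimate for $\|\tilde T_3\|_{L^2}$ is the step I expect to be the main obstacle, since it is the only genuinely new ingredient. Unlike the continuous case, where the It\^o isometry is immediate, here I must verify that the mixed products $\mathbb{E}[\sqrt{f_{k+1}(\tilde X^{k+1,1}_{t_{i-l}})}\,\xi^{k+1}_{i-l}\,\sqrt{f_{k+1}(\tilde X^{k+1,1}_{t_{i-l'}})}\,\xi^{k+1}_{i-l'}]$ vanish for $l\ne l'$. The argument is that, by back-iteration, $\tilde X_{t_{i-l}}$ is measurable with respect to $\sigma(\xi_0,\dots,\xi_{i-l-1})$; hence for $l<l'$ (i.e.\ $t_{i-l}>t_{i-l'}$) the latest increment $\xi^{k+1}_{i-l}$ is independent of all remaining factors and averages to zero. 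Once this orthogonality is in place, $\mathbb{E}[\tilde T_3^2]$ collapses to a single sum of variances, and $f_{k+1}\le f^{\max}_{k+1}$ yields $\|\tilde T_3\|_{L^2}\le \tfrac{|c_k|\sqrt{f^{\max}_{k+1}}}{(\eta_k+1-j)!}\sqrt{\tilde I^{k,j}_2/(N p_{k+1})}$, matching the second term inside the square.

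Collecting the three bounds reproduces the stated inequality verbatim. The remaining work is pure bookkeeping: identifying the sums $\Delta\sum_l e^{-u\nu_k t_l}t_l^{u(\eta_k+1-j)}$ with $\tilde I^{k,j}_u(t)$ and tracking the factorial $(\eta_k+1-j)!$. No estimate beyond the discrete orthogonality is required, so the proof is essentially a discrete transcription of Theorem \ref{thm:second_moment_bounds_process}, which is why I would relegate the full computation to the appendix alongside the continuous version.
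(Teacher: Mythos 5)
Your proposal is correct and takes essentially the same approach as the paper: the paper's proof of Theorem \ref{thm:second_moment_splitting} is a one-line reference back to the proof of Theorem \ref{thm:second_moment_bounds_process} with the integrals $I^{k,j}_u(t)$ replaced by the sums $\tilde I^{k,j}_u(t)$, and your argument --- the decomposition $T_1+\tilde T_2+\tilde T_3$, the sign treatment of the cross term, and the Cauchy--Schwarz/Minkowski combination yielding $\bigl(\|\tilde T_2\|_{L^2}+\|\tilde T_3\|_{L^2}\bigr)^2$ --- is exactly that transcription. The one step you elaborate that the paper leaves implicit, namely that the cross terms $\mathbb{E}\bigl[\sqrt{f_{k+1}(\tilde X^{k+1,1}_{t_{i-l}})}\,\xi^{k+1}_{i-l}\sqrt{f_{k+1}(\tilde X^{k+1,1}_{t_{i-l'}})}\,\xi^{k+1}_{i-l'}\bigr]$ vanish for $l\neq l'$ because $\tilde X_{t_m}$ is measurable with respect to the increments prior to time $t_m$, is indeed the correct discrete substitute for the It\^o isometry used in the continuous proof.
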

\begin{proof}[Proof of Theorem \ref{thm:second_moment_splitting}]
The proof repeats the proof of Theorem \ref{thm:second_moment_bounds_process}, up to replacing integrals $I^{k,j}_u(t)$ by sums $\tilde I^{k,j}_u(t)$. 
\end{proof}
Similar to before, the second moment bounds obtained for the splitting scheme equal those derived for the true process in Theorem \ref{thm:second_moment_bounds_process}, except that the integrals are replaced by corresponding Riemann sums. 
Using the same arguments as in the proof of Corollary \ref{cor:asymptotic_bounds_splitting}, we conclude that also the second moment bounds are preserved accurately by the splitting scheme for reasonable choices of the time step $\Delta$. A comparison of the theoretical and discrete second moment bounds is provided in the right panel of Figure \ref{fig:bounds_continuous_and_approximated}. %Theoretical results obtained in Corollary \ref{cor:asymptotic_bounds_splitting} are supported by numerical experiments (see Figure \ref{fig:bounds_continuous_and_approximated}). Both theoretical bounds, obtained in Theorems \ref{thm:moment_bounds_process}-\ref{thm:second_moment_bounds_process}, coincide with the discrete bounds (Theorems \ref{thm:moment_bounds_scheme}-\ref{thm:second_moment_splitting}) as time grows.

\subsection{Geometric ergodicity of the approximated process}
Finally, our aim is to prove that the splitting scheme preserves the ergodic property of the underlying process in the spirit of \cite{Mattingly2002, Ableidinger2017}, providing a discrete analogue of  Proposition \ref{prop:ergodicity_process}. 
The main step is to establish a discrete Lyapunov condition for the approximated solution $(\tilde X_{t_i})_{i=0,\ldots,i_{\text{max}}}$. It is granted by the following lemma. 
\begin{lemma}[Lyapunov condition for the approximated process] \label{lemma:ergodicity}
%Let $\tilde{X}$ denote the numerical solutions defined by \eqref{eq:split_scheme_LT2}, and 
Grant assumption \ref{assumption:a}. The functional $\tilde G$, given by 
\[
\tilde G(x) = \sum_{k=1}^2\sum_{j=1}^{\eta_k+1}\frac{j}{\nu_k^{j-1}}\left|x^{k,j}\right|,
\]
is a Lyapunov function for $\tilde X$, i.e., there exist constants $\alpha \in [0,1)$ and $\beta \geq 0$, such that
%\[
%\mathbb{E}\left[G(\tilde X_{t_{i+1}})\vert \mathcal{F}_{i} \right] \leq \alpha G(\tilde X_{t_i})+\beta,
%\]
\[
\mathbb{E}\left[\tilde G(\tilde X_{t_{i+1}})\vert \tilde X_{t_i} \right] \leq \alpha \tilde  G(\tilde X_{t_i})+\beta.
\]
%where $ \mathcal{F}_{i}$ is defined in Remark \ref{rem:hypoellipticity_scheme}.
\end{lemma}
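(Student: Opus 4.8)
The plan is to exploit the fact that $\tilde G$ is a weighted $\ell^1$-norm on $\mathbb{R}^\kappa$, hence subadditive and positively homogeneous, and to show that the linear flow $\psi^{[1]}_{\Delta}=e^{A\Delta}$ contracts $\tilde G$ while the bounded nonlinear and noise contributions add only a constant. Starting from the one-step update \eqref{eq:split_scheme_LT2} and using subadditivity of $\tilde G$, I would first bound
\begin{equation*}
\tilde G(\tilde X_{t_{i+1}}) \le \tilde G\big(e^{A\Delta}\tilde X_{t_i}\big) + \Delta\,\tilde G\big(e^{A\Delta}B(\tilde X_{t_i})\big) + \frac{\sqrt\Delta}{\sqrt N}\,\tilde G\big(e^{A\Delta}\sigma(\tilde X_{t_i})\xi_i\big).
\end{equation*}
Taking the conditional expectation given $\tilde X_{t_i}$, the last two terms are absorbed into a constant: since $B$ and $\sigma$ are uniformly bounded under assumption~\ref{assumption:a} (by $f_{k+1}^{\max}$ and $\sqrt{f_{k+1}^{\max}/p_{k+1}}$, respectively), the matrix $e^{A\Delta}$ has finite entries for fixed $\Delta$, and $\mathbb{E}\|\xi_i\|<\infty$, both terms are bounded by some finite $\beta\ge 0$ independent of $\tilde X_{t_i}$.

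The core of the argument is the contraction estimate $\tilde G(e^{A\Delta}x)\le \alpha\,\tilde G(x)$ with $\alpha\in[0,1)$. Using the explicit form \eqref{eq:jth_element} of the components of $e^{A\Delta}x$, applying the triangle inequality inside each absolute value, and interchanging the order of summation, I would show that the coefficient of $|x^{k,m}|$ in the resulting bound equals
\begin{equation*}
e^{-\nu_k\Delta}\sum_{j=1}^{m}\frac{j}{\nu_k^{j-1}}\frac{\Delta^{m-j}}{(m-j)!} = R_m^{(k)}\,\frac{m}{\nu_k^{m-1}}, \qquad R_m^{(k)} := \frac{e^{-y}}{m}\sum_{l=0}^{m-1}\frac{(m-l)\,y^l}{l!},
\end{equation*}
with $y=\nu_k\Delta$. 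The decisive point, and the place where the specific weights $j/\nu_k^{j-1}$ are used, is to verify that $R_m^{(k)}<1$ for every admissible $m$. I would rewrite $R_m^{(k)} = Q_m - \tfrac{y}{m}Q_{m-1}$, where $Q_m = e^{-y}\sum_{l=0}^{m-1} y^l/l! = \mathbb{P}(\mathrm{Poisson}(y)\le m-1)\in(0,1)$; since $Q_m<1$ and $\tfrac{y}{m}Q_{m-1}>0$, this yields $R_m^{(k)}<1$ at once. As there are only finitely many pairs $(k,m)$ with $k\in\{1,2\}$ and $1\le m\le \eta_k+1$, setting $\alpha:=\max_{k,m}R_m^{(k)}<1$ gives the contraction $\tilde G(e^{A\Delta}x)\le\alpha\,\tilde G(x)$.

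Combining the two estimates produces $\mathbb{E}[\tilde G(\tilde X_{t_{i+1}})\mid \tilde X_{t_i}]\le \alpha\,\tilde G(\tilde X_{t_i})+\beta$, which is the claim. The main obstacle is the contraction estimate: one must compute the weighted column sums of $e^{A\Delta}$ exactly and recognize the Poisson-tail structure that forces $R_m^{(k)}<1$; without the particular weights $j/\nu_k^{j-1}$ this strict inequality would generically fail and the Lyapunov drift would be lost. The remaining ingredients, namely subadditivity and homogeneity of $\tilde G$ and boundedness of the $B$- and $\sigma$-contributions, are routine.
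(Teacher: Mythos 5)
Your proposal is correct and follows essentially the same route as the paper's proof: the same triangle-inequality decomposition of the one-step update, the same absorption of the bounded $B$- and $\sigma$-contributions into $\beta$, and the same key fact that the weighted column sums of $e^{A\Delta}$ are controlled by truncated exponential (Poisson partial-sum) factors strictly less than $1$ because $\eta_k$ is finite. The only difference is bookkeeping: you exchange the order of summation and compute the exact coefficient of each $|x^{k,m}|$, obtaining the slightly sharper contraction constant $\alpha=\max_{k,m}\bigl(Q_m-\tfrac{y}{m}Q_{m-1}\bigr)$, whereas the paper bounds each shifted sum by $\tilde G$ and takes $\alpha=\max_k e^{-\nu_k\Delta}\sum_{j=0}^{\eta_k}(\nu_k\Delta)^j/j!$; both yield the claimed drift condition.
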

\begin{proof}
We bound the approximated solution obtained via \eqref{eq:split_scheme_LT2} from above by a sum of three terms, thanks to the triangle inequality:
 \begin{align*}
     \tilde G( \tilde X_{t_{i+1}}) &= \tilde G\left(e^{A\Delta} \tilde X_{t_i} + \Delta e^{A\Delta} B( \tilde X_{t_i}) + \frac{\sqrt{\Delta}}{\sqrt N} e^{A\Delta} \sigma(  \tilde X_{t_i})\xi_i\right)  \\ &\leq
     \underbrace{\tilde G\left(e^{A\Delta} \tilde X_{t_i}\right)}_{T_1} + \underbrace{\Delta \tilde G\left(e^{A\Delta} B( \tilde X_{t_i})\right)}_{T_2} + \underbrace{\frac{\sqrt{\Delta}}{\sqrt N} \tilde G\left(e^{A\Delta} \sigma( \tilde X_{t_i})\xi_i\right)}_{T_3}.
 \end{align*}
Note that the term $T_2$, as well as the expectation of $T_3$ is bounded by a constant depending on $f^{\max}_k$, so that $\mathbb{E}[T_2\vert \tilde X_{t_i}]+\mathbb{E}[T_3\vert \tilde X_{t_i}]\leq \beta$, and $\beta > 0$ since we consider the absolute value.
Further, using the formulas \eqref{eq:A_exponential}-\eqref{eq:jth_element}, we can expand $T_1$ as follows.
\begin{eqnarray*}
    &&\tilde G\left(e^{A\Delta} \tilde X_{t_i}\right) =  \sum_{k=1}^2e^{-\nu_k\Delta}\sum_{j=1}^{\eta_k+1}\frac{j}{\nu_k^{j-1}}\left|\sum_{m=j}^{\eta_k+1} \frac{\Delta^{m-j}}{(m-j)!} \tilde X^{k,m}_{t_i}\right|\\ &\leq& \sum_{k=1}^2e^{-\nu_k\Delta}\sum_{j=1}^{\eta_k+1}\frac{j}{\nu_k^{j-1}}\sum_{m=j}^{\eta_k+1} \frac{\Delta^{m-j}}{(m-j)!}\left| \tilde X^{k,m}_{t_i}\right| \\ &=& \sum_{k=1}^2e^{-\nu_k\Delta}\left[\sum_{j=1}^{\eta_k+1}\frac{j}{\nu_k^{j-1}} \left| \tilde X^{k,j}_{t_i}\right|+\Delta \sum_{j=2}^{\eta_k+1}\frac{j-1}{\nu_k^{j-2}} \left| \tilde X^{k,j}_{t_i}\right|+\dots + \frac{\Delta^{\eta_k}}{\eta_k!} \left| \tilde X^{k,\eta_k+1}_{t_i}\right|\right]  \\ &=& \sum_{k=1}^2e^{-\nu_k\Delta}\left[\sum_{j=1}^{\eta_k+1}\frac{j}{\nu_k^{j-1}} \left| \tilde X^{k,j}_{t_i}\right|+\Delta\nu_k \sum_{j=2}^{\eta_k+1}\frac{j-1}{\nu_k^{j-1}} \left| \tilde X^{k,j}_{t_i}\right|+\dots + \frac{(\nu_k\Delta)^{\eta_k}}{\eta_k!} \frac{1}{\nu_k^{\eta_k}}\left| \tilde X^{k,\eta_k+1}_{t_i}\right|\right] .
\end{eqnarray*}
Note that, since $\nu_k>0$, for all $m\geq 1$ it holds that
\[
\sum_{j=m}^{\eta_k+1}\frac{(j-m+1)}{\nu_k^j}\left| \tilde X^{k,j}_{t_i}\right| \leq  \sum_{j=1}^{\eta_k+1}\frac{j}{\nu_k^j}\left| \tilde X^{k,j}_{t_i}\right| = \tilde G\left( \tilde X^k_{t_i}\right).
\]
Thus, we have 
\begin{equation*}
  \tilde  G\left(e^{A\Delta} \tilde X_{t_i}\right)  \leq \sum_{k=1}^2\left(e^{-\nu_k\Delta}\sum_{j=0}^{\eta_k}\frac{(\nu_k\Delta)^{j}}{j!}  \right) \tilde G\left(  \tilde X^k_{t_i}\right) .
\end{equation*}
 Denote $\alpha = \max_k \left(e^{-\nu_k\Delta}\sum_{j=0}^{\eta_k}\frac{(\nu_k\Delta)^{j}}{j!}  \right)$. Since $\eta_k$ is finite, we get $\alpha < 1$, which implies  the result.
\end{proof}
Note that the statement of Lemma \ref{lemma:ergodicity} holds without any assumption on the time step $\Delta$. Also, the Lyapunov function is the same as for the continuous process up to smoothing the absolute value (see \eqref{eq:lyapunov_eva}).  % see Proposition 5 in \cite{Ditlevsen2017eva}.
Having established a discrete Lyapunov condition, the ergodicity is conditioned on two further technical steps. First, the transition probability of two (or more) consecutive steps, given by the recursive relation \eqref{eq:split_scheme_LT2}, must have a smooth transition density. This fact is granted by the hypoellipticity of the scheme (see Remark \ref{rem:hypoellipticity_scheme}).

Second, the irreducibility condition must hold. It means that any point $y\in \mathbb{R}^\kappa$ could be reached from any starting point $x\in \mathbb{R}^\kappa$ in a fixed number of steps. In other words, we need a discrete-time analogue of Theorem 4 in \cite{Locherbach2019}, granting the controllability of SDE \eqref{eq:hawkes_approximation}. It is the following Lemma, which is proved in Appendix \ref{app:control}.
\begin{lemma}[Irreducibility condition]\label{lemma:irreducibility}
Grant assumption \ref{assumption:a}. Denote $\eta^* = \max_k\{\eta_k\}$. Then, for all $ x, y \in \mathbb{R}^\kappa$ there exists some sequence of 2-dimensional vectors $(\xi_i)_{i=1,\dots,\eta^*+1}$ such that
\begin{equation}\label{eq:irreducibility} 
y = \underbrace{\left(\psi_\Delta[\xi_{\eta^*+1}] \circ \dots \circ \psi_\Delta[\xi_{1}] \right)}_{\eta^*+1} (x),  
 \end{equation}
 where $\psi_\Delta$ denotes one step of the scheme defined by \eqref{eq:split_scheme_LT2}, where the notation $[\cdot]$ is introduced to stress the dependency on the vectors $(\xi_i)_{i=1,\dots,\eta^*+1}$.
\end{lemma}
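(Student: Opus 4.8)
The plan is to turn this nonlinear reachability problem into a \emph{linear} controllability question and then to exploit the Jordan (cascade) structure of the blocks $A_{\nu_k}$. First I would notice that, in one step of the scheme, the inputs $\xi$ and the nonlinear drift $B$ enter \emph{only} through the two ``deep memory'' components $(1,\eta_1+1)$ and $(2,\eta_2+1)$, before the invertible linear flow $e^{A\Delta}$ is applied. Writing the step as
\[
\psi_\Delta[\xi](x) = e^{A\Delta}\Big(x + w^{(1)}(x,\xi)\,\mathbf e_{1,\eta_1+1} + w^{(2)}(x,\xi)\,\mathbf e_{2,\eta_2+1}\Big),
\]
with $\mathbf e_{k,j}$ the corresponding standard basis vector and
\[
w^{(1)}(x,\xi) = \Delta c_1 f_2(x^{2,1}) + \tfrac{\sqrt\Delta}{\sqrt N}\tfrac{c_1}{\sqrt{p_2}}\sqrt{f_2(x^{2,1})}\,\xi^2,
\qquad
w^{(2)}(x,\xi) = \Delta c_2 f_1(x^{1,1}) + \tfrac{\sqrt\Delta}{\sqrt N}\tfrac{c_2}{\sqrt{p_1}}\sqrt{f_1(x^{1,1})}\,\xi^1,
\]
I would invoke assumption \ref{assumption:a} ($f_k>0$) to see that the coefficients of $\xi^2$ and $\xi^1$ never vanish. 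Hence, for any fixed state $x$, the pair $(w^{(1)},w^{(2)})$ can be driven to an arbitrary vector of $\mathbb{R}^2$ by a unique choice of $\xi\in\mathbb{R}^2$. This lets me treat the injections $(w_i^{(1)},w_i^{(2)})$ as free real controls: given any target sequence of injections, I realise it step by step, solving for $\xi_i$ once $\tilde X_{t_i}$ is known.

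Under this reparametrisation the recursion becomes the linear control system $x^{(i+1)} = e^{A\Delta}\big(x^{(i)} + w_i^{(1)}\mathbf e_{1,\eta_1+1}+w_i^{(2)}\mathbf e_{2,\eta_2+1}\big)$. Setting $M := e^{A\Delta}$ and iterating $n := \eta^*+1$ times gives
\[
x^{(n)} = M^n x + \sum_{i=1}^n M^{\,n-i+1}\big(w_i^{(1)}\mathbf e_{1,\eta_1+1}+w_i^{(2)}\mathbf e_{2,\eta_2+1}\big).
\]
Thus reaching an arbitrary $y$ from $x$ reduces to showing that $\{M^j\mathbf e_{1,\eta_1+1},\,M^j\mathbf e_{2,\eta_2+1}: j=1,\dots,n\}$ spans $\mathbb{R}^\kappa$, a Kalman-type condition. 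Since $A$, and hence $M$, is block-diagonal with blocks $e^{A_{\nu_1}\Delta}$ and $e^{A_{\nu_2}\Delta}$, and each $\mathbf e_{k,\eta_k+1}$ lives in the $k$-th block, this decouples into two per-population statements: $\{e^{\,jA_{\nu_k}\Delta}\mathbf e_{\eta_k+1}: j=1,\dots,n\}$ spans $\mathbb{R}^{\eta_k+1}$ for $k=1,2$.

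The core of the argument, and the step I expect to be the main obstacle, is verifying this per-population spanning; it is the discrete-time counterpart of the propagation of noise through the cascade. I would write $A_{\nu_k} = -\nu_k I + N$, where $N$ is the nilpotent shift with $N\mathbf e_{m+1} = \mathbf e_m$ and $N\mathbf e_1 = 0$, so $\mathbf e_{\eta_k+1}$ is cyclic for $N$ (the vectors $N^l \mathbf e_{\eta_k+1}=\mathbf e_{\eta_k+1-l}$, $l=0,\dots,\eta_k$, form the whole basis). Since $M_k := e^{A_{\nu_k}\Delta} = e^{-\nu_k\Delta}\big(I + \Delta N + \tfrac{\Delta^2}{2}N^2 + \dots\big)$, the matrix $M_k - e^{-\nu_k\Delta}I$ is nilpotent with leading term $\Delta e^{-\nu_k\Delta}N$; expanding $M_k^j\mathbf e_{\eta_k+1}$ in the ordered basis $\mathbf e_{\eta_k+1},\mathbf e_{\eta_k},\dots,\mathbf e_1$ shows that $\{\mathbf e_{\eta_k+1}, M_k\mathbf e_{\eta_k+1},\dots,M_k^{\eta_k}\mathbf e_{\eta_k+1}\}$ is triangular with nonzero diagonal entries $(\Delta e^{-\nu_k\Delta})^l$, hence a basis. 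Equivalently, $\mathbf e_{\eta_k+1}$ is cyclic for the single Jordan block $M_k$. As $M_k$ is invertible, the span of $\{M_k^j\mathbf e_{\eta_k+1}: j=1,\dots,\eta_k+1\}$ is the full space, and because $n=\eta^*+1\ge \eta_k+1$ for both $k$ the extra steps only add redundant directions. Solving the resulting surjective linear system for the injections $(w_i^{(1)},w_i^{(2)})$ and then back-substituting to recover the vectors $\xi_i$ as above yields the desired sequence, proving \eqref{eq:irreducibility}.
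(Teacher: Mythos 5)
Your proposal is correct and follows the same overall strategy as the paper's proof: both use the positivity of the rate functions from assumption \ref{assumption:a} to trade the Gaussian inputs $\xi_i$ for free real controls entering through the last coordinate of each population block, both exploit the block-diagonal structure of $e^{A\Delta}$ to decouple the problem into two independent linear control problems, and both reduce reachability to the Kalman-type condition that the vectors $e^{jA_{\nu_k}\Delta}\mathbf{e}_{\eta_k+1}$, $j=1,\dots,\eta_k+1$, span $\mathbb{R}^{\eta_k+1}$. The differences are in the execution of the linear part: the paper invokes a standard discrete-time controllability theorem and verifies the rank condition by computing $e^{jA_{\nu_k}\Delta}b_k$ explicitly as scalar multiples of $\bigl(\tfrac{((j+1)\Delta)^{\eta_k}}{\eta_k!},\dots,1\bigr)^T$, deducing independence from the fact that a nonzero polynomial of degree at most $\eta_k$ cannot vanish at $\eta_k+1$ distinct points, whereas you iterate the scheme explicitly and argue structurally, via $A_{\nu_k}=-\nu_k I + N$ with $N$ the nilpotent shift, that $\mathbf{e}_{\eta_k+1}$ is cyclic for $M_k=e^{A_{\nu_k}\Delta}$; both verifications are valid, yours avoiding the Vandermonde-type computation, the paper's avoiding any appeal to cyclicity. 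One wording slip you should repair: the family $\{\mathbf{e}_{\eta_k+1}, M_k\mathbf{e}_{\eta_k+1},\dots,M_k^{\eta_k}\mathbf{e}_{\eta_k+1}\}$ is \emph{not} triangular in the ordered basis, since every component of $M_k^j\mathbf{e}_{\eta_k+1}$ is nonzero for $j\geq 1$; what is triangular, with exactly the diagonal entries $(\Delta e^{-\nu_k\Delta})^l$ you quote, is the shifted family $(M_k-e^{-\nu_k\Delta}I)^l\mathbf{e}_{\eta_k+1}$, $l=0,\dots,\eta_k$. Since
\begin{equation}
\mathrm{span}\left\{M_k^j\mathbf{e}_{\eta_k+1}:\, 0\leq j\leq \eta_k\right\} = \mathrm{span}\left\{\left(M_k-e^{-\nu_k\Delta}I\right)^l\mathbf{e}_{\eta_k+1}:\, 0\leq l\leq \eta_k\right\}
\end{equation}
by binomial expansion in both directions, the cyclicity claim and hence your conclusion stand once this is stated precisely.
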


Lemmas \ref{lemma:ergodicity} and \ref{lemma:irreducibility}, combined with the hypoellipticity of the scheme gives the following result, which is analogous to Theorem 7.3 in \cite{Mattingly2002}. 
\begin{theorem}[Geometric ergodicity]
Grant Assumption \ref{assumption:a}. Then the process $(\tilde X_{t_i})_{i=0,\dots,i_{\max}}$ has a unique invariant measure $\pi^\Delta$ on $\mathbb{R}^\kappa$. For all initial conditions $x_0$ and all $m\geq 1$, there exist $\tilde C = C(m, \Delta) >0$ and  $\tilde\lambda = \tilde\lambda(m,\Delta)>0$ such that, for all measurable functions $g:\mathbb{R}^\kappa\to \mathbb{R}$ such that $|g|\leq \tilde G^m$, 
\begin{equation*}
    \forall i=0,\dots,i_{\max}, \quad \left| \mathbb{E} g(\tilde X_{t_i}) - \pi^\Delta(g)\right|\leq \tilde C \tilde G(x_0)^m e^{-\tilde \lambda t_i}.
\end{equation*}
\end{theorem}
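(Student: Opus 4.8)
The plan is to follow the discrete-time Harris / Meyn--Tweedie framework exactly as in the proof of \cite[Theorem 7.3]{Mattingly2002}, which derives geometric ergodicity of a Markov chain on $\mathbb{R}^\kappa$ from two ingredients: a \emph{Lyapunov (drift) condition} for the transition operator $P^\Delta$ of the scheme and a \emph{minorization (small-set) condition} on the sublevel sets of the Lyapunov function. Both ingredients are, up to one upgrade, already available: Lemma \ref{lemma:ergodicity} supplies the drift condition for $\tilde G$, while Remark \ref{rem:hypoellipticity_scheme} and Lemma \ref{lemma:irreducibility} together supply the minorization. The structure thus mirrors the continuous-time argument in Proposition \ref{prop:ergodicity_process}, with each analytic ingredient replaced by its discrete counterpart.

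First I would upgrade the one-step drift bound of Lemma \ref{lemma:ergodicity} from $\tilde G$ to $\tilde G^m$ for arbitrary $m \geq 1$. Writing the one-step update as $\tilde G(\tilde X_{t_{i+1}}) \leq \alpha\,\tilde G(\tilde X_{t_i}) + R_i$, where $R_i$ collects the conditionally centred drift and diffusion contributions, I would raise this to the $m$-th power and take conditional expectation. Since $B$ and $\sigma$ are uniformly bounded under Assumption \ref{assumption:a} and $\xi_i$ is Gaussian, $R_i$ has finite conditional moments of every order, uniformly in the state. Expanding the power and applying Young's inequality to the cross terms, the leading coefficient is $\alpha^m < 1$, and the remaining terms can be absorbed as $\varepsilon\,\tilde G(\tilde X_{t_i})^m + C_\varepsilon$ for arbitrarily small $\varepsilon$; choosing $\varepsilon$ so that $\alpha^m + \varepsilon < 1$ yields $\mathbb{E}[\tilde G(\tilde X_{t_{i+1}})^m \mid \tilde X_{t_i}] \leq \alpha_m \tilde G(\tilde X_{t_i})^m + \beta_m$ with $\alpha_m \in (0,1)$ and $\beta_m < \infty$. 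This is the discrete drift condition for the weight $V = \tilde G^m$, valid for every fixed $\Delta > 0$.

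Second I would establish the minorization. By Remark \ref{rem:hypoellipticity_scheme} the conditional law of a single step is non-degenerate Gaussian in the propagated directions, and iterating $\eta^*+1$ steps the accumulated conditional covariance becomes full rank; hence the $(\eta^*+1)$-step transition kernel $(P^\Delta)^{\eta^*+1}(x,\cdot)$ admits a density $p(x,y)$ that is jointly continuous in $(x,y)$. Lemma \ref{lemma:irreducibility} shows that for every pair $(x,y)$ there is an admissible noise sequence driving $x$ to $y$ in $\eta^*+1$ steps, which, combined with the non-degeneracy, forces $p(x,y) > 0$ everywhere. Joint continuity and strict positivity then give, on any sublevel set $C = \{\tilde G^m \leq R\}$ (which is compact) and any fixed ball $\mathcal{B}$, a uniform lower bound $p(x,y) \geq \delta > 0$ for $x \in C$, $y \in \mathcal{B}$, i.e. $(P^\Delta)^{\eta^*+1}(x,\cdot) \geq \delta\,\mathbf{1}_{\mathcal{B}}(\cdot)\,dy$ for all $x \in C$. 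This is precisely the small-set condition required by the framework.

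With the drift condition for $\tilde G^m$ and the minorization on sublevel sets in hand, the conclusion follows by applying \cite[Theorem 2.5]{Mattingly2002} to the $(\eta^*+1)$-step chain and interpolating over the intermediate steps, exactly as at the end of the proof of Proposition \ref{prop:ergodicity_process}. This yields a unique invariant measure $\pi^\Delta$ and the weighted convergence bound, with constants $\tilde C(m,\Delta)$ and $\tilde\lambda(m,\Delta)$ inherited from $\alpha_m, \beta_m$ and $\delta$. I expect the main obstacle to be the \emph{uniform} lower bound in the minorization step: passing from the pointwise positivity granted by Lemma \ref{lemma:irreducibility} to a bound uniform over a compact set of initial conditions requires that reachability be stable under perturbation of the endpoints and of the controls, which is where the joint continuity of the hypoelliptic density does the essential work. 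The $m$-th moment upgrade of the drift is routine by comparison, the only point of care being to keep $\alpha_m$ strictly below $1$.
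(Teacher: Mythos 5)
Your proposal takes essentially the same route as the paper: the paper's entire proof consists of combining the discrete Lyapunov condition (Lemma \ref{lemma:ergodicity}), the irreducibility condition (Lemma \ref{lemma:irreducibility}) and the hypoellipticity of the scheme (Remark \ref{rem:hypoellipticity_scheme}), then invoking the discrete-time framework of Theorem 7.3 (via Theorem 2.5) in \cite{Mattingly2002}, which is exactly your structure. The extra details you supply --- the Young-inequality upgrade of the drift condition from $\tilde G$ to $\tilde G^m$, and the minorization on compact sublevel sets obtained from the strictly positive, continuous multi-step transition density --- are precisely the steps delegated to the cited reference, so your argument is a correct fleshing-out of the paper's proof rather than a different one.
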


\section{Thinning procedure for the simulation of the PDMP}\label{section:simulation_PDMP}

In this section we explain the simulation method for the multidimensional point process characterized by the intensities \eqref{eq:intensity}. This part is motivated by the fact that, on the contrary to the diffusion, the simulation of the PDMP can be exact. By that, we mean that the result of the simulation is a realization of $(\bar{X}_t)_{t\geq 0}$. In comparison, the result of the simulation of the diffusion $(X_t)_{t\geq 0}$ is in fact the discrete time process $(\tilde{X}_{t_i})_{i=0,\dots,i_{\max}}$.
This allows us to compare the PDMP \eqref{eq:PDMP_flow} with the stochastic diffusion defined through \eqref{eq:hawkes_approximation}, which we treat via the property-preserving splitting scheme, introduced in the previous section. %In particular, the exact simulation procedure of the PDMP is used to illustrate the convergence results presented in Theorems \ref{thm:weak_error_bound}-\ref{thm:strong:approx:diffusion} (see Section \ref{sec:simulation_study}). %It will be the final point of the simulation study in Section \ref{sec:simulation_study}.
%There exist numerous ways to simulate point processes. Here 

We choose the thinning procedure which dates back to \cite{lewis1979sim} and \cite{ogata1981on}. 
% It is one of the main method used to make an exact simulation of point processes. By exact, we mean that there is no approximation scheme like the one induced by the choice of some strictly positive time step $\Delta$ for diffusion processes.
It is based on the rejection principle and relies on the following fact. In order to simulate a point process $Z$ according to the stochastic intensity $\lambda_t$, it is sufficient to simulate some (dominating) point process $\tilde{Z}$ with (dominating) predictable piece-wise constant intensity $\tilde{\lambda}$ such that $\lambda_t \leq \tilde{\lambda}_t$. During the simulation of $\tilde{Z}$, each new simulated spiking time $\tilde{T}$ for $\tilde{Z}$ is kept as a point of $Z$ with probability ${\lambda}_{\tilde{T}}/\tilde\lambda_{\tilde{T}}$ (independently from every other point). Otherwise, $\tilde{T}$ is discarded. %As based on the rejection principle, 
The efficiency of the thinning procedure is highly related to the sharpness of the upper-bound $\tilde{\lambda}$. The sharper the bound, the less rejections are made and the more efficient is the procedure.

Note that the case $\eta_k=0$ corresponds to the exponential kernel. The simulation of Hawkes processes with an exponential kernel is widely studied and there exist several implemented packages, e.g., for the software \textsf{R}. Moreover, apart from the thinning procedure, other exact simulation algorithms are available, see, in particular, \cite{dassios2013exact}. To the best of our knowledge, the only reference for the case when $\eta_k \geq 1$ is \cite{Duarte2019}. The aim of the current section is to generalize the algorithm presented in the above mentioned work to the case of multiple populations and to provide a more efficient upper bound $\tilde{\lambda}$. In particular, our approach allows for an efficient handling of rapidly increasing intensity functions.

\subsection{Choice of an upper bound for the intensity}
If $\bar{Z}_t= 0$, i.e., in absence of any spike, it follows from \eqref{eq:PDMP_flow} that $\bar{X}$ evolves as a linear ODE with matrix $A$ so that $\bar{X}_t = e^{At}x_0$. In particular, for all neurons $n=1,\dots, N_k$, it follows that 
\begin{equation}\label{eq:intensity:no:spike}
    \lambda^{k,n}_t = f_k((e^{At}x_0)^{k,1}).
\end{equation}
One possible choice for the dominating intensity $\tilde{\lambda}$ in the thinning procedure is to provide an upper-bound of \eqref{eq:intensity:no:spike} which holds for all $t\geq 0$. % It is the choice made in \cite{Duarte2019}.
A straightforward candidate for such a bound is provided in the following lemma.
\begin{lemma}\label{lem:bound:intensity}
For any $x\in \mathbb{R}^\kappa$, let $\Phi_k(x) = \sup_{t\geq 0} (e^{At}x)^{k,1}$. Then,
\begin{equation}
    \Phi_k(x) \leq \tilde{\Phi}_k(x) = \max_{j=1,\dots, \eta_k +1} \left\{ 0, \frac{x^{k,j}}{\nu_k^{j-1}} \right\}.
\end{equation}
\end{lemma}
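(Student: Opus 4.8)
The plan is to compute $(e^{At}x)^{k,1}$ explicitly via formula \eqref{eq:jth_element} of Remark \ref{rem:expA}, and then to recognize the resulting expression as a \emph{subconvex combination} of the quantities $x^{k,j}/\nu_k^{j-1}$ that appear on the right-hand side. Setting $j=1$ in \eqref{eq:jth_element} gives
\begin{equation*}
(e^{At}x)^{k,1} = e^{-\nu_k t}\sum_{m=1}^{\eta_k+1}\frac{t^{m-1}}{(m-1)!}\,x^{k,m}.
\end{equation*}
First I would rewrite each coefficient by factoring out the appropriate power of $\nu_k$, namely
\begin{equation*}
e^{-\nu_k t}\frac{t^{m-1}}{(m-1)!} = \frac{1}{\nu_k^{m-1}}\, w_m(t), \qquad w_m(t) := e^{-\nu_k t}\frac{(\nu_k t)^{m-1}}{(m-1)!}.
\end{equation*}
The weights $w_m(t)$ are exactly the Poisson point masses of parameter $\nu_k t$, so they are nonnegative and satisfy $\sum_{m=1}^{\eta_k+1}w_m(t)\le\sum_{m\ge 1}w_m(t)=1$. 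With this notation,
\begin{equation*}
(e^{At}x)^{k,1} = \sum_{m=1}^{\eta_k+1}\frac{x^{k,m}}{\nu_k^{m-1}}\,w_m(t).
\end{equation*}

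The key step is then a sign-sensitive bound on this subconvex combination. Writing $M := \max_{m=1,\dots,\eta_k+1} x^{k,m}/\nu_k^{m-1}$, I would distinguish two cases. If $M\ge 0$, then each coefficient is $\le M$ and the weights are nonnegative, so $(e^{At}x)^{k,1}\le M\sum_{m}w_m(t)\le M$, where the last inequality uses $M\ge 0$ together with $\sum_m w_m(t)\le 1$. If $M<0$, then every coefficient is negative and the nonnegativity of the weights yields $(e^{At}x)^{k,1}\le 0$. In both cases $(e^{At}x)^{k,1}\le\max\{0,M\}=\tilde\Phi_k(x)$, and taking the supremum over $t\ge 0$ gives the claim.

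There is no genuinely hard step here; the only subtlety, and the reason the constant $0$ must appear inside the maximum, is that the weights $w_m(t)$ sum to strictly less than $1$ (the Poisson tail $m>\eta_k+1$ is truncated), so one cannot simply bound by $M$ when $M$ is negative. The recognition of the coefficients as truncated Poisson masses, which makes the subconvexity transparent, is the one idea that has to be spotted; everything else is the elementary case split above.
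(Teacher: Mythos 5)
Your proof is correct and follows essentially the same route as the paper: both rescale to the variables $x^{k,j}/\nu_k^{j-1}$ and then exploit that the resulting coefficients $e^{-\nu_k t}(\nu_k t)^{m-1}/(m-1)!$ are nonnegative and sum to at most $1$ (the paper phrases this as the truncated exponential series bound $g(t)\leq e^{\nu_k t}$, while you phrase it as truncated Poisson masses). Your explicit case split on the sign of the maximum just spells out what the paper absorbs by placing $0$ inside the maximum from the start.
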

\begin{proof}
The explicit expression of $(e^{At}x)^{k,1}$ is given in \eqref{eq:jth_element}, that is:
\begin{equation*}
    (e^{At}x)^{k,1} = e^{-\nu_k t} \left( x^{k,1} + tx^{k,2} +\dots + \frac{t^{\eta_k}}{\eta_k !} x^{k,\eta_k+1} \right).
\end{equation*}
Setting $y_j = x^{k,j}/(\nu_k)^{j-1}$, one gets 
\begin{equation}
    (e^{At}x)^{k,1} = e^{-\nu_k t} \left( y_1 + t\nu_k y_2 +\dots + \frac{(t\nu_k)^{\eta_k}}{\eta_k !} y_{\eta_k+1} \right) \leq \max_k\{0,y_k\} e^{-\nu_k t} g(t).
\end{equation}
The result follows from the fact that $g(t) = 1+t\nu_k + \dots + (t\nu_k)^{\eta_k}/\eta_k! \leq e^{\nu_k t}$.
\end{proof}

\begin{remark}
Another possible choice of a uniform bound, similar to the one given in Lemma \ref{lem:bound:intensity}, is provided in \cite{Duarte2019}. Their method, adapted to our case, gives
\begin{equation}
    \Phi_k(x) \leq e \max\left\{ 1, \left(\frac{\eta_k}{e\nu_k}\right)^{\eta_k} \right\} \max_{j}\{x^{k,j}\},
\end{equation}
which is larger, and thus less efficient than the bound proposed in Lemma \ref{lem:bound:intensity}.
\end{remark}
Since the functions $f_k$ are non-decreasing, the upper-bound of $(e^{At}x)^{k,1}$ given in Lemma \ref{lem:bound:intensity} provides the bound $\tilde{f}_k(x) = f_k( \tilde{\Phi}_k(x))$ on the intensity.
However, there is no guarantee that this bound  is sharp.
In most practical cases (especially when the functions $f_k$ are increasing fast), the procedure rejects a vast majority of the simulated points. 
Hence, a more efficient approach, based on the computation of the critical points of the function $(e^{At}x)^{k,1}$, is proposed. Further, instead of considering a bound for any $t>0$ we choose a fixed time step $\tilde\Delta>0$ (such that one spike is likely to occur in the interval $[0,\tilde\Delta]$) and compute $\Phi_k^{\tilde\Delta}(x) = \sup_{0\leq t\leq  {\tilde\Delta}} (e^{At}x)^{k,1}$ instead of $\Phi_k(x)$. This choice has no impact on the precision of the simulation. It only influences the sharpness of the bound used in the method and thus its computational efficiency.
\begin{lemma}\label{lem:bound:intensity:local}
For any $x\in \mathbb{R}^\kappa$, it holds that
\begin{equation}
    \Phi_k^{\tilde\Delta}(x) = \max_{0<t_c <{\tilde\Delta}} \{x^{k,1}, (e^{At_c}x)^{k,1}, (e^{A{\tilde\Delta}}x)^{k,1}\},
\end{equation}
where the maximum is taken over the critical points $t_c$ of $t\mapsto (e^{At}x)^{k,1}$, that are the solutions of the equation
\begin{equation}
    (-\nu_k x^{k,1} + x^{k,2}) + \dots + (-\nu_k x^{k,\eta_k} + x^{k,\eta_k+1}) \frac{(t_c)^{\eta_k-1}}{(\eta_k-1)!} + (-\nu_k x^{k,\eta_k+1}) \frac{(t_c)^{\eta_k}}{(\eta_k)!} = 0.
\end{equation}
\end{lemma}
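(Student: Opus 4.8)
The plan is to treat $\phi(t) := (e^{At}x)^{k,1}$ as a smooth scalar function on the \emph{compact} interval $[0,\tilde\Delta]$ and invoke the extreme value theorem: the supremum $\Phi_k^{\tilde\Delta}(x) = \sup_{0\le t\le\tilde\Delta}\phi(t)$ is attained, and it must be attained either at an interior critical point or at one of the two endpoints $t=0$ and $t=\tilde\Delta$. Evaluating $\phi$ at the endpoints is immediate: since $e^{A\cdot 0}=I$ we get $\phi(0)=x^{k,1}$, and $\phi(\tilde\Delta)=(e^{A\tilde\Delta}x)^{k,1}$. These account for the first and last candidates appearing in the claimed maximum, so it only remains to identify the interior critical points with the roots of the stated polynomial equation.

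To do that I would start from the explicit expression \eqref{eq:jth_element}, which for $j=1$ reads $\phi(t)=e^{-\nu_k t}\sum_{m=1}^{\eta_k+1}\tfrac{t^{m-1}}{(m-1)!}x^{k,m}$, and differentiate with the product rule. The factor $e^{-\nu_k t}$ contributes $-\nu_k$ times the same sum, while differentiating the polynomial part lowers the running index by one. After reindexing both sums in powers of $t$, the common factor $e^{-\nu_k t}$ comes out and the remaining bracket is exactly the polynomial $P(t)$ whose coefficient of $t^{j}/j!$ equals $x^{k,j+2}-\nu_k x^{k,j+1}$ for $0\le j\le\eta_k-1$, together with the top-order term $-\nu_k x^{k,\eta_k+1}\,t^{\eta_k}/\eta_k!$. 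This is precisely the left-hand side of the critical point equation in the statement.

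Since $e^{-\nu_k t}>0$ for every $t$, the zeros of $\phi'$ coincide with those of $P$, so the interior critical points of $\phi$ on $(0,\tilde\Delta)$ are exactly the roots $t_c$ of $P$ lying in that interval. Taking the maximum of $\phi$ over the finite candidate set consisting of these critical points together with the two endpoints then yields the claimed formula for $\Phi_k^{\tilde\Delta}(x)$. I do not expect any genuine obstacle beyond carrying out the differentiation and reindexing carefully; the only point deserving a word of justification is that the maximization is over a \emph{finite} set (the at most $\eta_k$ real roots of $P$ in $(0,\tilde\Delta)$, plus $t=0$ and $t=\tilde\Delta$), which is legitimate precisely because $\phi$ is continuous on a compact interval and hence attains its supremum at one of these points.
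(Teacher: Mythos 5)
Your proof is correct and follows essentially the same approach as the paper, whose proof is simply the one-line remark that the result follows from computing the time derivative of $(e^{At}x)^{k,1}$. You have merely filled in the details of that computation (differentiation of \eqref{eq:jth_element}, factoring out $e^{-\nu_k t}$, and the extreme value theorem on the compact interval $[0,\tilde\Delta]$), all of which check out.
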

\begin{proof}
The result follows from the computation of the time derivative of $ (e^{At}x)^{k,1}$.
 % It is just the computation of the time-derivative of \eqref{eq:expression:eAtx}.
\end{proof}
The critical points in Lemma \ref{lem:bound:intensity:local} are given by polynomial roots, which can be accurately computed numerically. In most practical cases, the computational cost of the polynomial roots is compensated by the efficiency gained in the rejection method.
Finally, let us define the upper-bound intensity function by 
$$\tilde{f}_k^{\tilde\Delta}(x) = f_k(\Phi_k^{\tilde\Delta}(x)).$$
Note that when the population is inhibitory ($c_k = -1$), the naive upper-bound $\tilde{f}_k$ is constant with respect to time because all the coordinates of $\bar{X}^1$ are always negative and the bound given by Lemma \ref{lem:bound:intensity} is 0. Thus, $\tilde{f}_k\equiv f_k(0)$. Of course, such a bound is not sharp in general. However, it is interesting to see how the  two upper-bounds $\tilde{f}_k$ and $\tilde{f}^{\tilde\Delta}_k$ behave for a particular realisation of the intensity process for excitatory populations.
Figure \ref{fig:intensity_bounds} gives a comparison of the paths of $\tilde{f}_2$ and $\tilde{f}^{\tilde\Delta}_2$ for the excitatory population (with $\tilde\Delta\equiv 1$). We observe that both bounds are rather precise when the potential $\bar{X}^2_t$ (and, respectively, the intensity process) is decreasing. On these intervals the differences between the three trajectories are negligible. However, the accuracy of $\tilde{f}_2$ drops drastically on the intervals where the intensity grows. In general, the higher is the amplitude of the oscillations, the less performing is the naive bound. This is particularly visible when illustrated on systems with high memory order ($\eta_k=3$ or $6$). For $\eta_k=1$ both bounds perform good, however, $\tilde{f}^{\tilde\Delta}_k$ is clearly closer to the true process. The influence of the bound ($\tilde{f}_2$ or $\tilde{f}^{\tilde\Delta}_2$) on the execution time is discussed in Section \ref{sec:simulation_study}.
\begin{figure}
\begin{centering}
    \includegraphics[width=\textwidth]{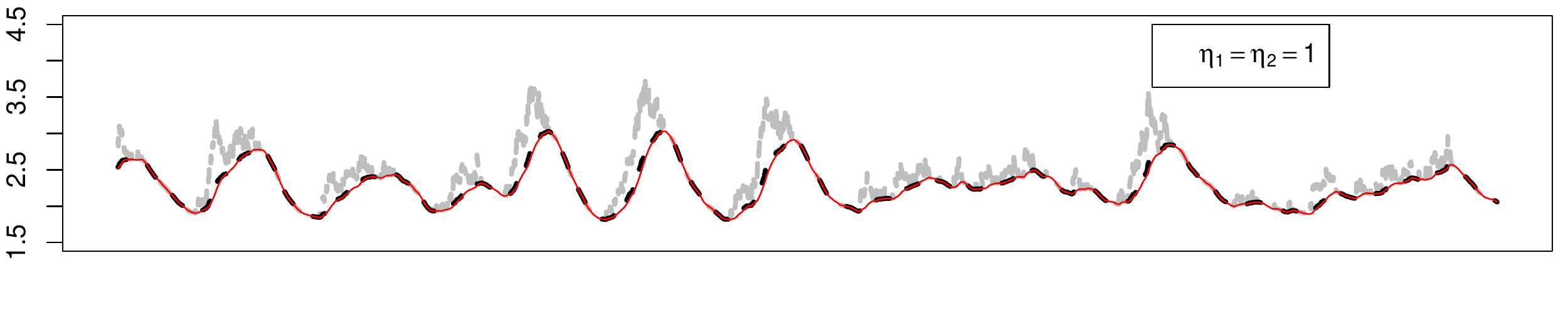}
    \includegraphics[width=\textwidth]{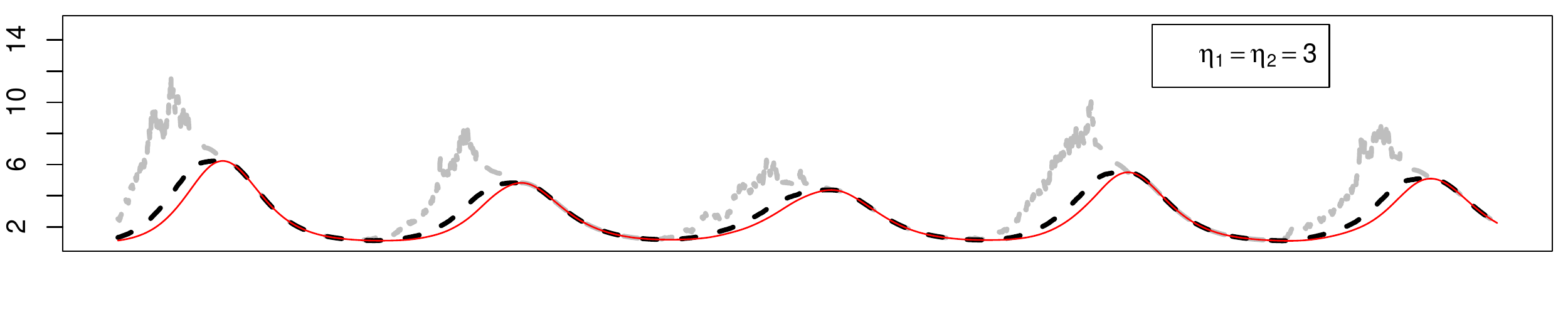}
    \includegraphics[width=\textwidth]{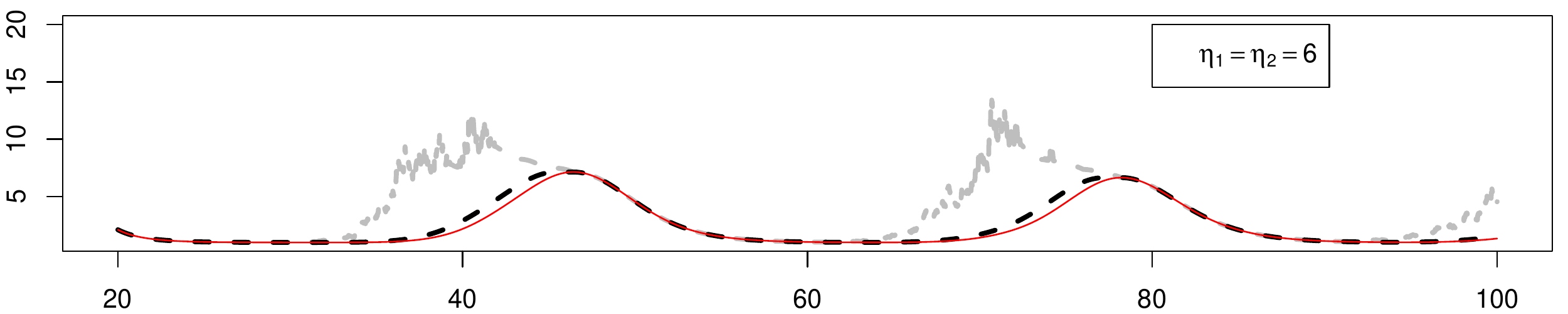}
	 \caption{Intensity and intensity bounds for the second population (excitatory) $t\in[20, 100]$. Red solid line: true intensity $\lambda^2_t$, black dash line: $\tilde{f}_2^{\tilde\Delta}(\bar X_t)$, grey dash line: $\tilde{f}_2(\bar X_t)$. Intensity functions $f_1$ and $f_2$ are given in \eqref{eq:f1:f2}, $\nu_1=\nu_2 = 0.9$, $N_1 = N_2 = 50$.}
	\label{fig:intensity_bounds}
\end{centering}
\end{figure}

\subsection{Simulation algorithm}
Now let us detail the recursive procedure, which is summarized in Algorithm \ref{algo:simulation:point:process}. We choose a discrete time step ${\tilde\Delta}$, %the choice of which is discussed further, 
a stopping time $t_{\max}$ and fix the initial values $t_0 = 0$ and $\bar{X}_0 = x_0\in \mathbb{R}^\kappa$. 
Let us assume that the procedure's current step is $i$ with current time $t_i$ and potential value $\bar{X}_i$. Let us explain how $t_{i+1}$ and $\bar{X}_{i+1}$ are obtained.
One simulates two independent exponential variables $\tau_1$ and $\tau_2$ with respective parameters $N_k\tilde{f}_k^{\tilde\Delta}(x)$ (one for each population). They represent the waiting times to the next spikes of the dominant process $\tilde{Z}$ for each respective population. Then, two cases may occur.
\begin{enumerate}
    \item If $\min\{\tau_1, \tau_2\}> {\tilde\Delta}$, no spike occurs in the interval $[t_i, t_i+{\tilde\Delta}]$. We update $t_{i+1} = t_i+{\tilde\Delta}$ and $\bar{X}_{i+1} = e^{A{\tilde\Delta}}\bar{X}_i$.
    \item  If $\tau = \min\{\tau_1, \tau_2\}\leq {\tilde\Delta}$, then the dominating point process $\tilde{Z}$ emits a spike at time $t^* = t_i+\tau$. Let us denote by $k^*$ the population with the smallest waiting time, that is $\tau = \tau_{k^*}$. It remains to decide whether $t^*$ is also a spiking time for the process $Z$. If not, this point is discarded. We draw a uniform variable $U$ on $[0,1]$ and define the threshold $R$:
    \begin{equation*}
        R := \frac{f_{k^*}\left(e^{A\tau}\bar{X}_{i} \right)}{\tilde{f}_{k^*}^{\tilde\Delta}(\bar{X}_{i})}, \quad R \in [0,1] \text{ by the definition of  }\tilde{f}_k^{\tilde\Delta}(x).
    \end{equation*}
    \begin{itemize}
        \item If $U\geq R$, then $t^*$ is discarded, i.e., no spike occurs in the interval $[t_i, t^*]$. We update $t_{i+1} = t^*$ and  $\bar X_{i+1}=e^{A\tau}\bar X_{i}$.
        \item If $U<R$, then $t^*$ is kept, i.e., we add $t^*$ to the list of one neuron of population $k^*$ chosen uniformly at random. We update $t_{i+1} = t^*$ and $\bar X_{i+1} = e^{A\tau}\bar X_{i} + \Gamma \mathds{1}(k^*)$, where $\mathds{1}(k^*) = (\mathds{1}_{k^* = 1}, \mathds{1}_{k^* = 2})^T$.
    \end{itemize}
\end{enumerate}
Finally, the execution is stopped once $t_{i}\geq t_{\max}$, i.e., once the time horizon of interest is reached.  As output the algorithm returns a list of the spiking times of each neuron and the values of the processes $\bar{X}$ and $\lambda$ at the spiking times. 
On this stage it is clear why it is important to have a sharp upper bound. The closer the threshold $R$ is to $1$, the less points are rejected. 
% \Red{We will discuss the numerical performance of both bounds in Section \ref{sec:simulation_study}. } 
% J: This is already written in the previous section.

\begin{algorithm}\caption{Simulation of model \eqref{eq:intensity} with $K = 2$ populations. \label{algo:simulation:point:process}}
\KwIn{intensity functions $f_1$ and $f_2$; integers $N_1$, $N_2$, $\eta_1$ and $\eta_2$; real numbers $c_1$, $c_2$, $\nu_1$, $\nu_2$, ${\tilde\Delta}$ and $t_{\max}$; real vector $x_0\in \mathbb{R}^\kappa$.}
\KwOut{point processes $(Z^{k,n})_{k=1,2;\, n=1,\dots, N_k}$, Markovian cascade process $\bar{X}$ and intensity processes $(\lambda^k)_{k=1,2}$.}
\emph{Initialization:} $t\leftarrow 0$, $x\leftarrow x_0$\;
    \While{$t<t_{\max}$}{ 
    $\tilde{\lambda}_k \leftarrow \tilde{f}_k^{\tilde\Delta}(x)$ for $k=1,2$\;
    draw $\tau_k\sim \mathcal{E}(N_k\tilde{\lambda}_k)$ for $k=1,2$\;
    $\tau \leftarrow \min_k \tau_k$ and $k^* \leftarrow \arg\min_k \tau_k$\;
        \eIf{$\tau>{\tilde\Delta}$}{
        \nlset{(1)}$t\leftarrow t+{\tilde\Delta}$ and $x\leftarrow e^{A{\tilde\Delta}}x$\;}{
        $t\leftarrow t+\tau$, $x\leftarrow e^{A\tau}x$, $\lambda_{k^*} \leftarrow f_{k^*}(x)$\;% and $R \leftarrow \lambda_{k^*}  / \tilde{\lambda}_{k^*}$\;
        draw $U\sim \mathcal{U}([0,1])$\;
            \eIf{$U< \lambda_{k^*}  / \tilde{\lambda}_{k^*}$}{
            \nlset{(2)}draw $n\sim \mathcal{U}(\{1,\dots, N_{k^*}\})$ and add $t$ to the list $Z^{k^*,n}$\;
            $x\leftarrow x + \Gamma\mathds{1}(k^*)$\;
            add $x$ to the list $\bar{X}$ and $\lambda_k = f_k(x)$ to the list $\lambda^k$ for $k=1,2$\;}
            {\nlset{(3)}do nothing\;}
            % \If{$U< \lambda_{k^*}  / \tilde{\lambda}_{k^*}$}{
            % \nlset{(2)}draw $n\sim \mathcal{U}(\{1,\dots, N_{k^*}\})$ and add $t$ to the list $Z^{k^*,n}$\;
            % $x\leftarrow x + C\mathds{1}(k^*)$\;
            % add $x$ to the list $\bar{X}$ and $\lambda_k = f_k(x)$ to the list $\lambda^k$ for $k=1,2$\;}
        }
    }
\end{algorithm}

Algorithm \ref{algo:simulation:point:process} is most efficient when every iteration of the \textbf{while} loop enters condition \textbf{(2)}. Of course, that ideal case does not occur in practice.
When lowering the value of ${\tilde\Delta}$, the number of loops satisfying condition \textbf{(3)} decreases because the dominating intensity $\tilde{\lambda}$ is getting smaller. On the other hand, the number of loops fulfilling condition \textbf{(1)} increases because the exponentially distributed times have greater chances of being larger than ${\tilde\Delta}$. 
The calibration of $\tilde{\Delta}$ is a difficult problem which is not addressed here. In practice, it is observed that the execution time is not very sensitive to the value of $\tilde{\Delta}$. The main bottleneck of the thinning algorithm is the sharpness of the intensity bound. When the intensity functions are exponential, the computational time is more than halved with the bound of Lemma \ref{lem:bound:intensity:local} compared to the bound of Lemma \ref{lem:bound:intensity}. This is illustrated in the right panel of Figure \ref{fig:elapsed_time}.

% \Red{Heuristically speaking, the step size must be small enough, so that we keep track of the flow of the system even if no spike occurs on a given interval. Also, this discretization secures us from numerical problems if the roots of the polynomial defined in Lemma \ref{lem:bound:intensity:local} cannot be found for some specific values of $\bar{X}$ or there are infinitely many solutions (for example, for the initial condition $x_0=0_k$). On the other hand, it should not be too small, so that we do not make too many updates between the spiking times (see condition \textbf{(1)}). We still note, however, that the main bottleneck of the thinning algorithm is the sharpness of intensity bound. Even a perfectly tuned time step ${\tilde\Delta}$ cannot compensate the decrease in performance originating in a badly chosen dominating intensity.}
% J: The main part of the paragraph above is in fact already stated just above.

\section{Numerical experiments}\label{sec:simulation_study}

%\textcolor{red}{The simulation study is conducted in 3 steps. Do we really have the second step?}
A simulation study, illustrating the theoretical results discussed in the previous sections, is now provided. It consists in two steps. First, we study the performance of the proposed splitting schemes. More precisely, we compare the Lie-Trotter \eqref{eq:split_scheme_LT2} and Strang \eqref{eq:split_scheme} splitting schemes with the Euler-Maruyama approximation. We report sample paths, empirical densities and comment also on the first and second moments. This step follows the numerical study in \cite{Ableidinger2017}, and shows that the Strang splitting performs best.
%\textcolor{green}{Second, we study the behaviour of the system in different regimes using Strang splitting. In particular, we are interested in the sharpness of the theoretical moment bounds obtained in Theorems \ref{thm:moment_bounds_process} and \ref{thm:second_moment_bounds_process}.} 
Second, we compare the diffusion process (simulated with the property-preserving Strang splitting scheme) to the PDMP, varying the number of neurons $N$. 
In particular, when comparing the long-time behaviour of the processes (see Figure \ref{fig:PDMP_and_diffusion}), we show that the diffusion approximation is less and less accurate as $t\to +\infty$. It confirms the results obtained in Theorems \ref{thm:weak_error_bound} and \ref{thm:strong:approx:diffusion}.

Following the work of \cite{Ditlevsen2017eva}, throughout this section we use the following intensity functions
\begin{equation}\label{eq:f1:f2}
f_1(x) = \begin{cases} 10e^x & \text{if } x<\log(20) \\
\frac{400}{1+400e^{-2x}} & \text{if } x\geq \log(20)
\end{cases},
\, \
f_2(x) = \begin{cases} e^x & \text{if } x<\log(20) \\
\frac{40}{1+400e^{-2x}} & \text{if } x\geq \log(20)
\end{cases}.
\end{equation}
% Note that they are positive, non-decreasing and bounded from above by $f^{\max}_1 = 400$ and $f^{\max}_2 = 40$, so that assumption \ref{assumption:a} holds. 
% \Red{J: These functions are not bounded from below by $1$.}
Further, we fix the parameters $c_1 = -1, c_2 = 1$ and consider $N_1 = N_2$. Unless stated differently, throughout this section the initial condition is fixed to $x_0=0_\kappa$. The parameter $p_k$ is then defined as ${N_k}/{N}$. The fact that $c_1c_2 <0$ ensures that the population shows an oscillatory behaviour, for certain parameters $\nu_k$ and $\eta_k$ (see \cite{Ditlevsen2017eva} for further details). 

\subsection{Comparison of the Euler-Maruyama method and the splitting schemes}

In this section we are interested in comparing  the performance of the splitting schemes %for a varying time step $\Delta$. 
with that of the frequently applied Euler-Maruyama method (EM), for varying time steps $\Delta$.
%As a reference method we take the Euler-Maruyama scheme (EM). 
%We further fix the following values of parameters: 
The parameter values $\nu_1 = \nu_2 = 1$, $\eta_1 = 3$,  $\eta_2 = 2$, $N_1 = N_2 = 50$ are used and the dimension of the system is thus $\kappa = 7$. Except for the density and mean-square convergence plots, we consider the time interval $[0,100]$. Unless stated otherwise, we plot the variables $X^{k,1}_t$ for $k=1,2$ in black and the remaining $\eta_1+\eta_2$ auxiliary memory variables in grey. % \textcolor{green}{All experiments are conducted in R.} \textcolor{red}{I would remove it. I find it not necessary to mention it. Moreover, we also use Rcpp, which would then have to be explained.}

\subsubsection{Illustration of the mean-square convergence order}
We start our study by comparing the convergence rates of the EM method and the Lie-Trotter \eqref{eq:split_scheme_LT2} and Strang \eqref{eq:split_scheme} splitting schemes. 
The root mean-square error, approximating the left side of the equation in Theorem \ref{thm:convergence}, is defined as
\begin{equation*}
    \text{RMSE}(\Delta):=\left( \frac{1}{M} \sum_{l=1}^{M} \| X^{(l)}_{t^*} - \tilde{X}^{(l)}_{t^*} \|^2 \right)^{1/2},
\end{equation*}
where $X^{(l)}_{t^*}$ and $\tilde{X}^{(l)}_{t^*}$ denote the values at a fixed time $t^*$ of the $l$-th simulated trajectory of the true process and its numerical approximation, respectively. The integer $M$ is the total number of simulated differences. The value of the true process $X^{(l)}_{t^*}$ is obtained from the EM scheme, using the small time step $\Delta=10^{-4}$. The number of simulations is fixed to $M=10^3$ and $t^*=1$. 

We report the RMSE in Figure \ref{fig:RMSE}, where the $x$-axis corresponds to the logarithm (base 10) of the time step $\Delta$ and the $y$-axis corresponds to the logarithm (base $10$) of the $\text{RMSE}$. The theoretical rate of convergence obtained in Theorem \ref{thm:convergence} (all considered schemes converge with order $1$) is confirmed empirically. %coincides with the one for the EM scheme and is equal to $1$. Empirically both splittings and the EM approximation confirm the theoretical result and show a convergence rate which is close to 1. 
While the Lie-Trotter splitting and the EM scheme show a similar RMSE for varying $\Delta$, the RMSE obtained for the Strang splitting method is significantly smaller for all $\Delta$ under consideration, implying a higher efficiency of that scheme. We stress, however, that from the fact that the rate of convergence is the same, it does not follow that they share the same qualitative properties when the step size $\Delta$ is fixed.
\begin{figure}
\begin{centering}
    \includegraphics[width=0.6\textwidth]{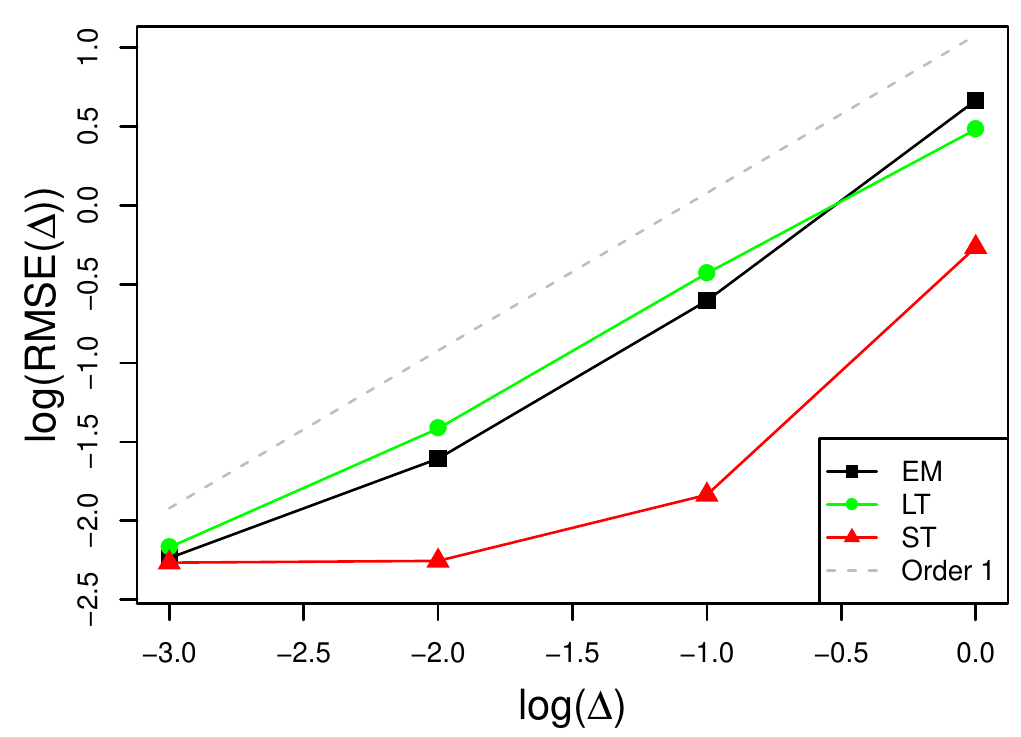}	
	\caption{Mean-square order of convergence. The reference solution is  obtained with the Euler-Maruyama method and the small time step $\Delta=10^{-4}$.  The numerical solutions  are calculated for $\Delta=10^{-3},10^{-2},10^{-1},10^0$. The log is with base $10$, $t^*=1$ and $M=10^3$.}
	\label{fig:RMSE}
\end{centering}
\end{figure}

\subsubsection{Illustration of the qualitative properties of the splitting schemes}
Now we illustrate how the proposed splitting schemes preserve the structure (e.g., the moments and the underlying invariant distribution) of the process, even for large values of $\Delta$, while the EM method may fail in doing so.
We start with studying sample trajectories (see Figure \ref{fig:paths_EM_SP}). All three methods yield a comparable performance when $\Delta=0.01$. For $\Delta = 0.5$, the EM scheme preserves the oscillations, but does not preserve the amplitude. The behaviour of the inhibitory population is less accurately approximated  than the excitatory one. % and both of them fail to enter the stationary regime. 
This problem aggravates as $\Delta$ increases further to $0.7$. %\textcolor{red}{I would mention the following already here, since its quite interesting and in my opinion clearly visible from the top right panel of Figure 5. We refer also to Figure 7, where it is visible as well.}
An interesting observation is that, for  {time steps $\Delta$ not ``small enough", the Euler-Maruyama scheme may not preserve the mean of the process (see also Figure \ref{fig:densities_EM_SP}). Indeed, it  {has been observed}  {that the} Euler-Maruyama  {method} preserves the first, but not the second moments (see, e.g., \cite{Ableidinger2017,Higham2004}).} In other words, the amplitude of the oscillations grows, but the main is unchanged. In our case, however, since the trajectories are bounded by $0$ from below or above (depending on the sign of $c_k$), {the} increased amplitude introduces  {also} a bias in the first moment.  {Thus}, the Euler-Maruyama approximation of system \eqref{eq:hawkes_approximation} does neither preserve the first nor the second moments.
In contrast, the
Lie-Trotter and Strang splitting schemes show a comparably good performance. %The diffusion paths enter their stationary regime rather fast. 
However, the Lie-Trotter splitting is less accurate in reproducing the delay between the current state of the process (black line) and the memory variables (grey lines) in the beginning of the interval, where the amplitude of the oscillations is large (see also Figure \ref{fig:densities_EM_SP}). %\Red{to rewrite the last sentence} 
\begin{figure}
	\centering	
	\subfigure{\includegraphics[width=1.0\textwidth]{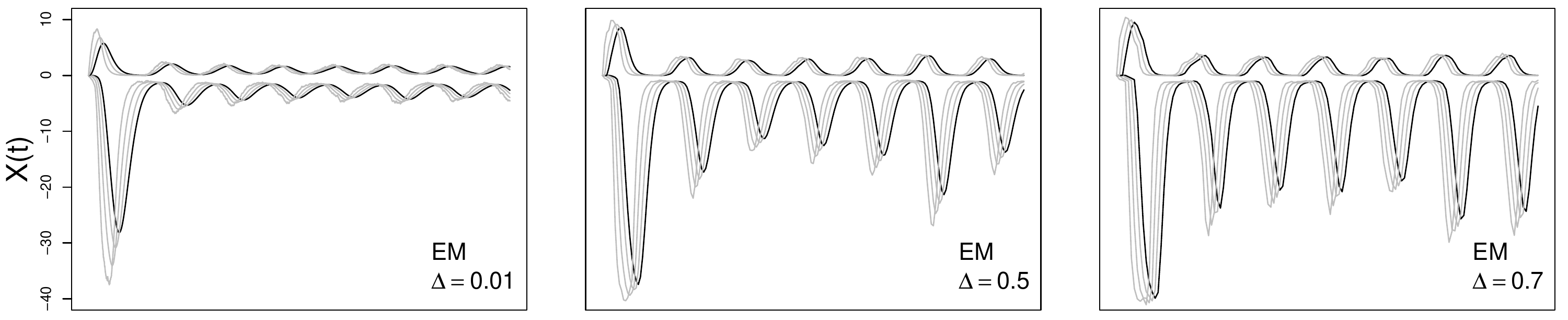}}	
	\subfigure{\includegraphics[width=1.0\textwidth]{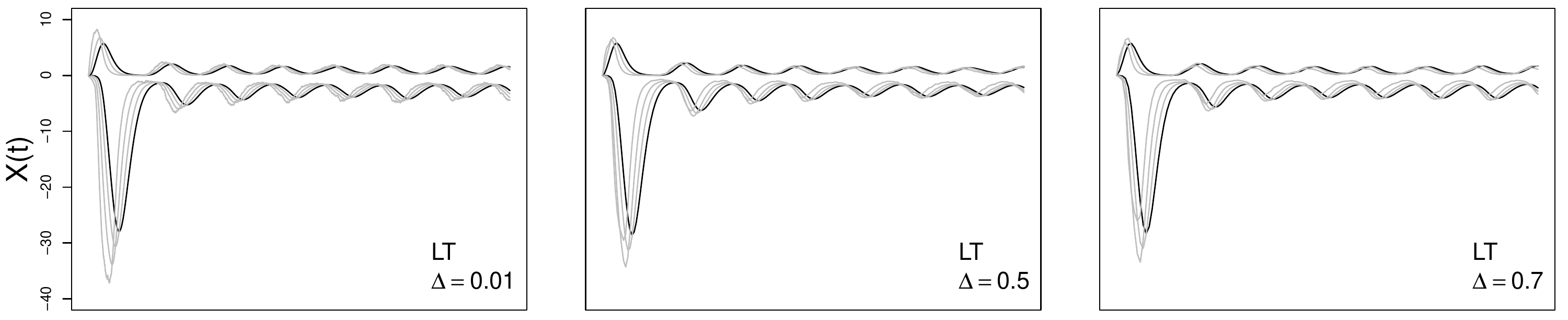}}
	\subfigure{\includegraphics[width=1.0\textwidth]{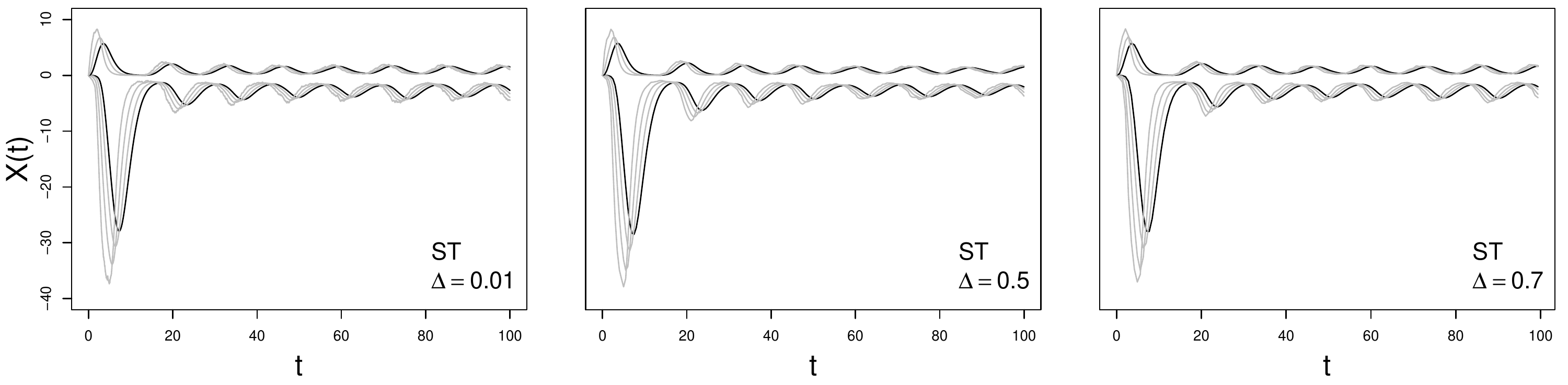}}	
	\caption{Sample trajectories of the system, simulated with the Euler-Maruyama scheme (top), the Lie-Trotter (middle) and the Strang (bottom) splitting scheme for varying $\Delta$.} % \textcolor{red}{$c_1=-1$, $c_2=1$, $\nu_1=\nu_2=1$, $N=100$, $N_1=N_2=50$, $p_1=N_1/N$, $p_2=N_2/N$, $\eta_1=3$, $\eta_2=2$, $\kappa=7$}}
	\label{fig:paths_EM_SP}
\end{figure}

The difference between the schemes becomes clearer as we look at the phase portrait of the system (Figure \ref{fig:phase_EM_SP}). %We observe that the process approximated with the Strang splitting enters its stationary regime disregarding on the time step. %\Red{Fast with respect to $\Delta$ or w.r.t. $t$ ?} 
%With the Lie-Trotter splitting, the asymptotic regime is also reached, but slower as in the case of the Strang splitting. 
We observe again that both splitting schemes yield satisfactory approximations (for all $\Delta$ under consideration), the Strang approach slighly outperforming the Lie-Trotter method. In contrast, the phase portrait obtained with the EM approximation fails to reproduce the behaviour of the process for $\Delta = 0.5$ or $0.7$. 
\begin{figure}
	\centering	
	\subfigure{\includegraphics[width=1.0\textwidth]{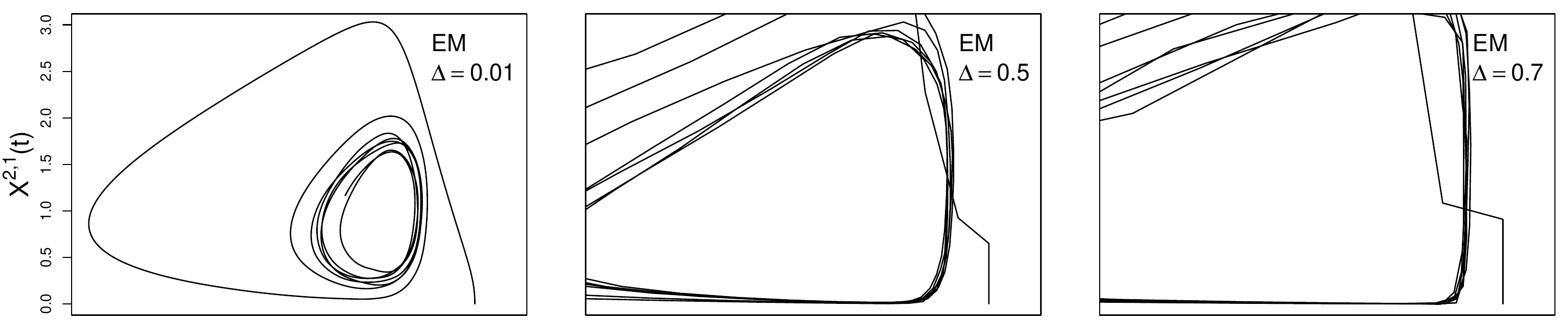}}	
	\subfigure{\includegraphics[width=1.0\textwidth]{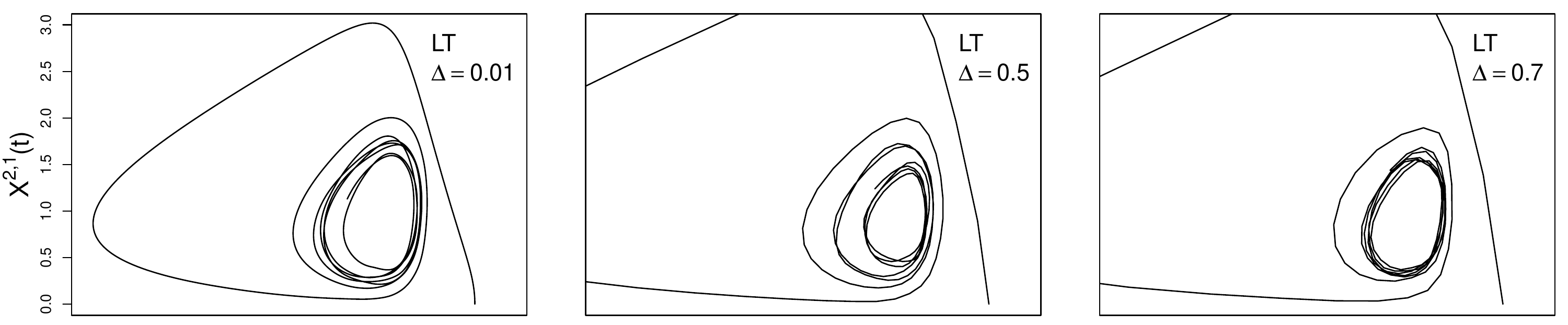}}
	\subfigure{\includegraphics[width=1.0\textwidth]{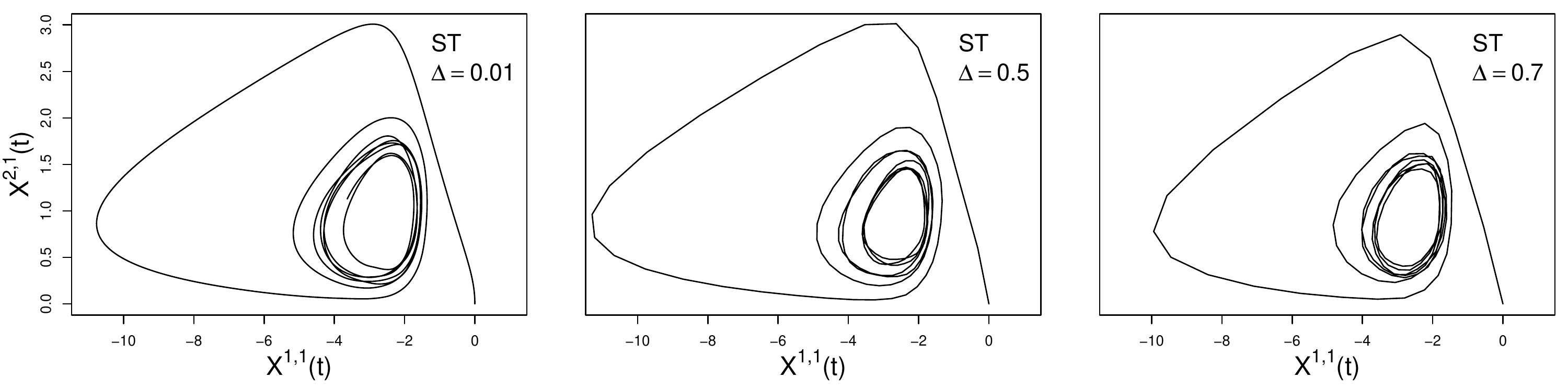}}	
	\caption{Phase portrait of the main variables, simulated with the Euler-Maruyama scheme (top), the Lie-Trotter (middle) and the Strang (bottom) splitting scheme for varying $\Delta$ and $x_0=(0,0,-3.5,-4,0,1.3,1.1)$.} % \textcolor{red}{$c_1=-1$, $c_2=1$, $\nu_1=\nu_2=1$, $N=100$, $N_1=N_2=50$, $p_1=N_1/N$, $p_2=N_2/N$, $\eta_1=3$, $\eta_2=2$, $\kappa=7$}}
	\label{fig:phase_EM_SP}
\end{figure}

Similar conclusions can be drawn from Figure \ref{fig:densities_EM_SP}, where we visualize the marginal densities of the process. Each visualized density is estimated with a standard kernel density estimator, based on a simulated long-time trajectory ($T=10^5)$ of each variable of the process. %For each plot, we simulate a long trajectory for $T=10^5$ with varying $\Delta$ and then plot the density for each variable. The most important observation here is that, for a large step size, the EM scheme does not preserve the true mean of the invariant distribution of the process.
We observe again that the EM method may not preserve the mean of the process (red dashed vertical lines). Moreover, the EM scheme may even suggest a transition from a unimodal to a bimodal density as $\Delta$ increases.
% \Red{J: "true mean of the invariant distribution" or "temporal mean of the process".}

%It is often observed  that for oscillatory systems the solutions approximated with the EM scheme preserve the first, but not the second moments (see, e.g., \cite{Ableidinger2017,Higham2004}). In other words, the amplitude of the oscillations grows, but the mean is unchanged. In our case, however, since the trajectories are bounded by 0 from below or above (depending on the sign of $c_k$), increased amplitude introduces a bias in the first moment. In other words, for system \eqref{eq:hawkes_approximation} the EM approximation does not preserve first nor second moments. Moreover, as $\Delta$ increases, the EM scheme even implies transfer from a unimodal to a bimodal density. 
\begin{figure}
	\centering	
	\subfigure{\includegraphics[width=1.0\textwidth]{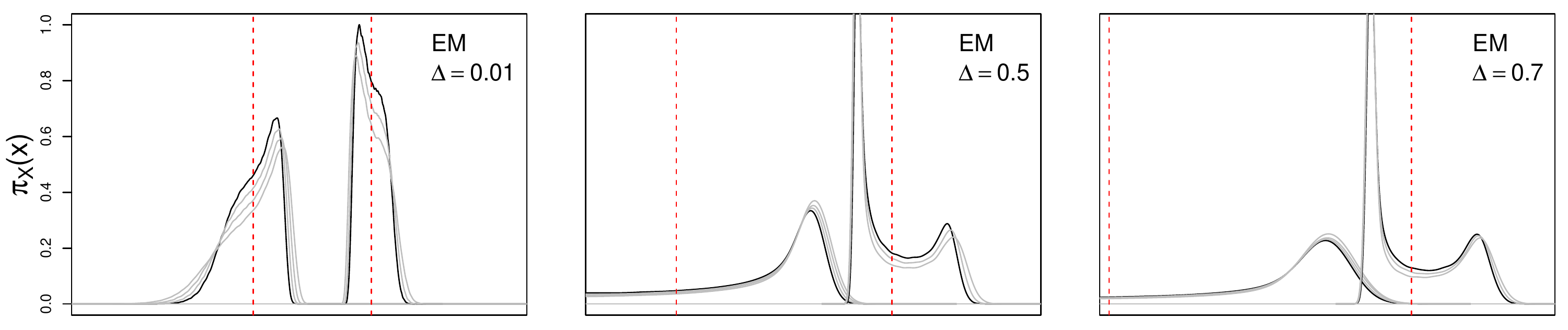}}	
	\subfigure{\includegraphics[width=1.0\textwidth]{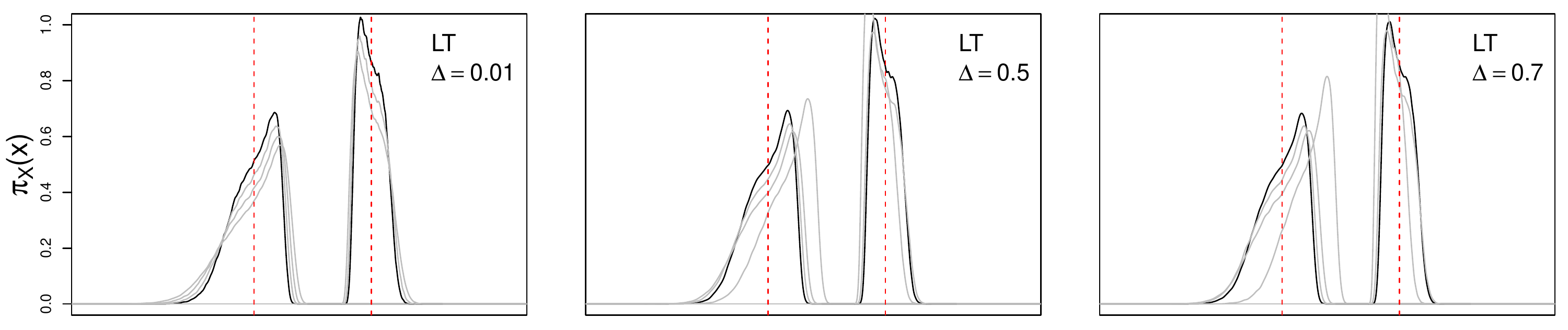}}
	\subfigure{\includegraphics[width=1.0\textwidth]{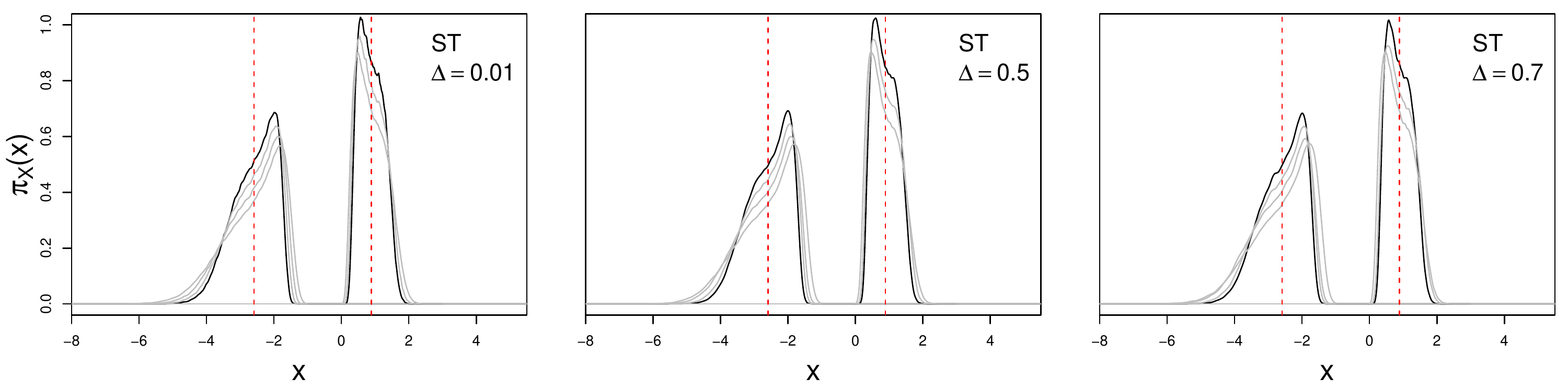}}	
	\caption{Empirical density of the system, simulated with the Euler-Maruyama schemes (top), the Lie-Trotter (middle) and the Strang (bottom) splitting scheme for varying $\Delta$ and $T = 10^5$. The red dashed vertical lines denote the mean of the main variables.}  %\textcolor{red}{$c_1=-1$, $c_2=1$, $\nu_1=\nu_2=1$, $N=100$, $N_1=N_2=50$, $p_1=N_1/N$, $p_2=N_2/N$, $\eta_1=3$, $\eta_2=2$, $\kappa=7$, $T=10^5$}}
	\label{fig:densities_EM_SP}
\end{figure}

\subsection{Comparison of the PDMP and the diffusion}

Now we are interested in comparing the PDMP process $\bar{X}$, simulated with the thinning algorithm detailed in Section \ref{section:simulation_PDMP}, with the diffusion $X$, simulated with the property-preserving Strang splitting scheme introduced in Section \ref{sec:simulation_methods}. We simulate the trajectories of the diffusion process with the Strang splitting scheme, since it has shown the best performance in the previous section. 

\subsubsection{Execution time}
As a first step we are interested in the execution time. We compare the numerical cost of the simulation of the process $\bar{X}$ with two different intensity bounds (based on Lemmas \ref{lem:bound:intensity} and \ref{lem:bound:intensity:local}) to the simulation of the diffusion $X$ with the Strang splitting scheme. 

We set $t_{\max}=100$ and vary the total number of neurons, taking $N = 20, 50, 100, 150, 200$ and $N_1 = N_2$. In the case of the diffusion simulation, the parameter $N$ does not influence the computational cost. Thus, we report the execution time for the diffusion simulation only for $N=200$, taking $\Delta = 0.1$ and report it as a reference value.   % Then we launch the simulation algorithm $100$ times using the bounds, based on Lemmas \ref{lem:bound:intensity} and \ref{lem:bound:intensity:local} respectively. 
The time step $\tilde\Delta$ for the thinning procedure is defined in an adaptive way within the \textbf{while} loop of Algorithm \ref{algo:simulation:point:process}. In each step we use the last computed value of the intensities $\lambda_k$ and set $\tilde\Delta$ equal to $(N_1\lambda_1+N_2\lambda_2)^{-1}$. This choice takes into account the scaling with respect to the number of neurons and the dynamics of the intensities. For instance, $\bar{X}^{2,1}$ roughly belongs to $[0,2]$ (see Figure \ref{fig:PDMP_and_diffusion}) such that the intensity of population roughly belongs to $[1,7]$ (with the intensity functions defined in \eqref{eq:f1:f2}). 

In Figure \ref{fig:elapsed_time}, two different sets of intensity functions, linear ones and exponential ones, are studied. The mean execution time (over 100 realizations) in seconds required to simulate the process on interval $[0,t_{\max}]$, using the bounds $\tilde{f}(x)$ and $\tilde{f}^\Delta(x)$ are plotted.  
\begin{figure}
\begin{centering}
    \includegraphics[width=0.48\textwidth]{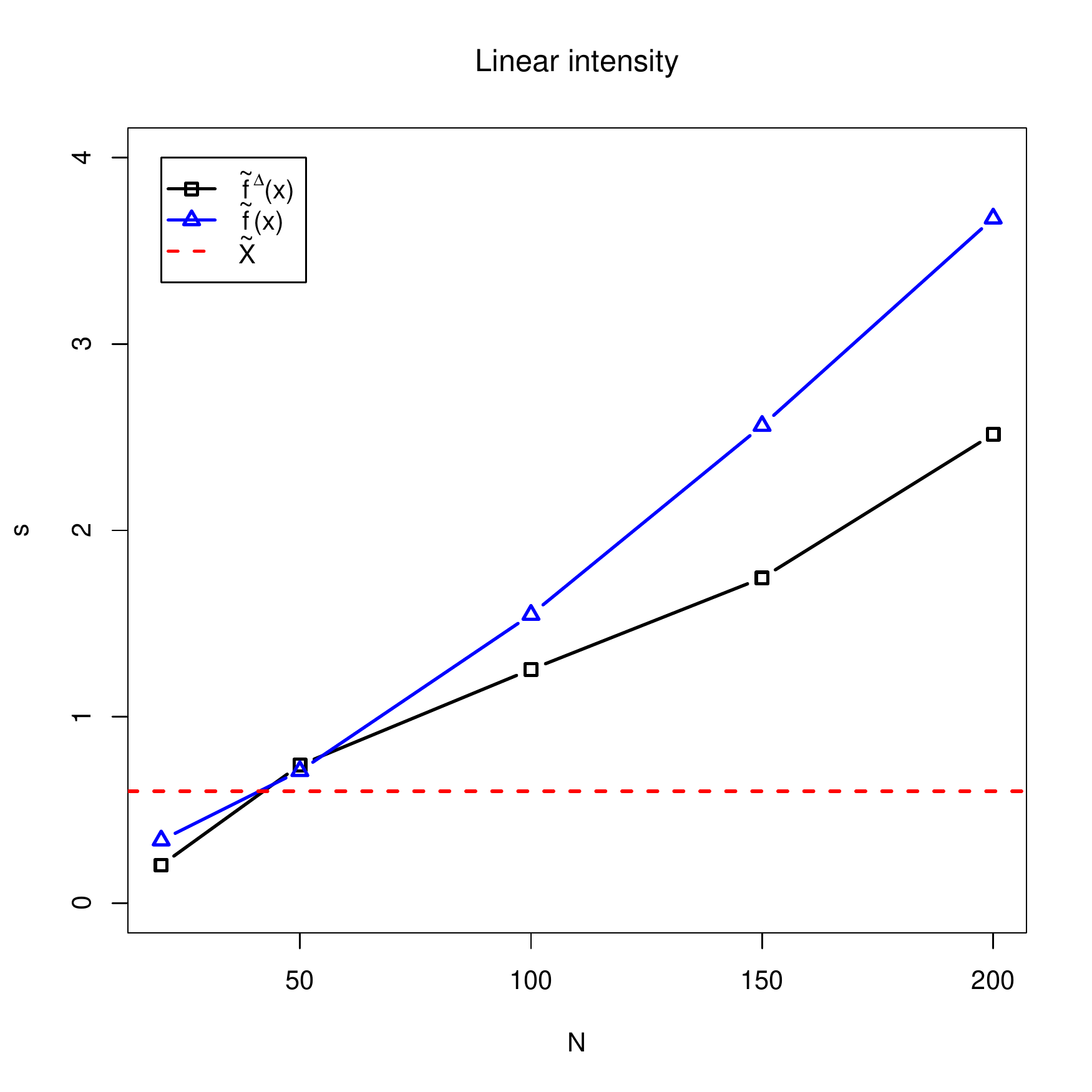}
    \includegraphics[width=0.48\textwidth]{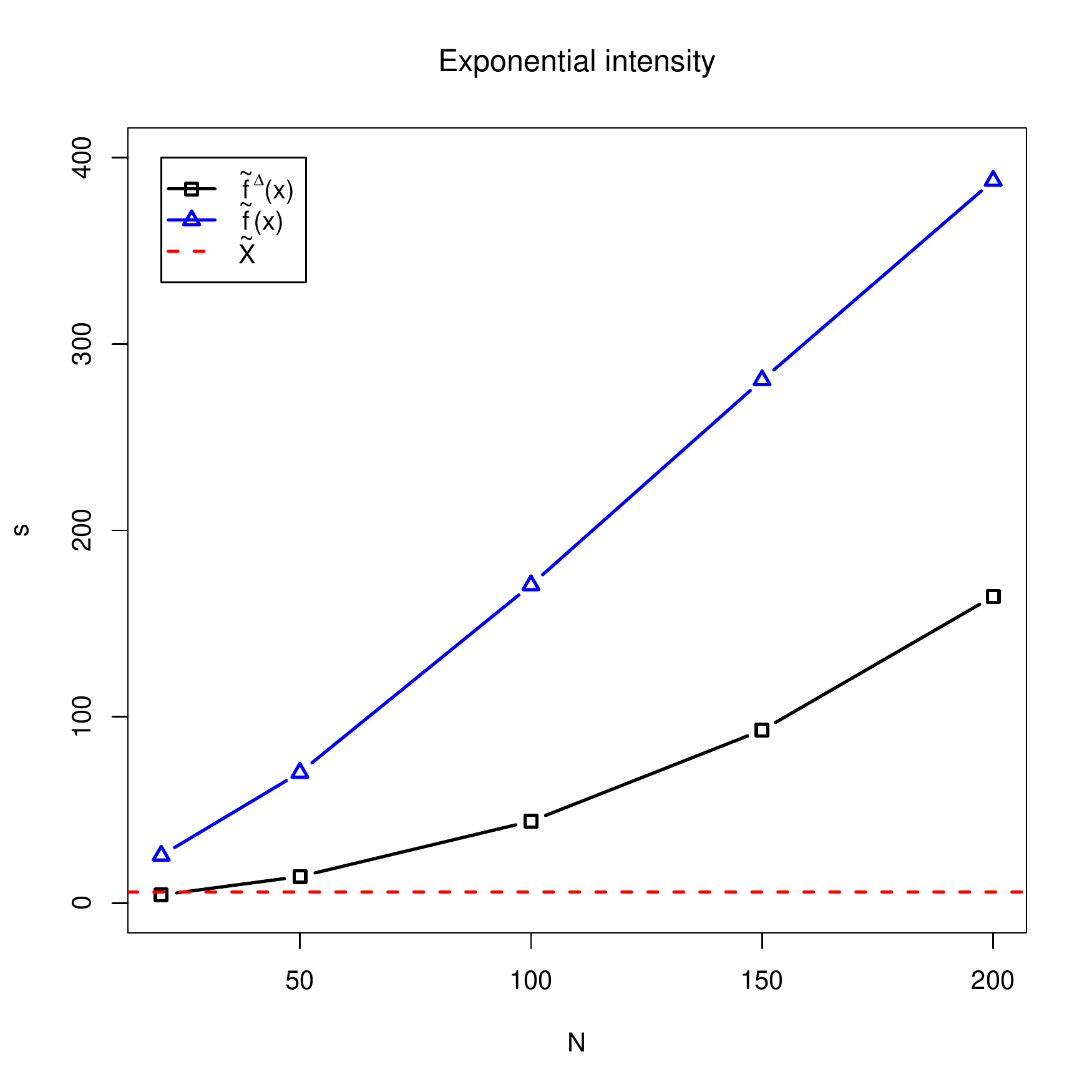}	
	\caption{Mean execution time for the PDMP (solid line) and diffusion (dashed line) simulation for $t_{\max}=100$ over 100 realizations.
	Right panel: $f_1(x) = f_2(x) = \min\{1+x\mathds{1}_{[x>0]}, 10\}$. Left panel: $f_1$ and $f_2$ are given by Equation \eqref{eq:f1:f2}. The rest of the parameters are given in the beginning of Section \ref{sec:simulation_study}.}
	\label{fig:elapsed_time}
\end{centering}
\end{figure}

Note that there is almost no difference in the performance of the algorithm with different bounds in the linear case (left panel of Figure \ref{fig:elapsed_time}).  That means that the bound obtained in Lemma \ref{lem:bound:intensity} is sharp enough. Note also that since $f^{\max}_k = 10$, there occur only a few spikes and the process is simulated very fast. However, in the case of an exponential intensity (right panel of Figure \ref{fig:elapsed_time}), the execution time drastically increases. The process is simulated at least twice faster with the local bound. The main reason is that the local bound $\tilde{f}^\Delta_k(x)$ rejects around 2\% of points, while the $\tilde{f}_k(x)$ rejects around 90\%. In general, we can conclude that the execution time  depends linearly on the number of neurons for both the local and the general bound. Disregarding the bound chosen, both algorithms cannot compete with the time required for simulating the diffusion. For $\Delta = 0.1$ and $T=100$ the average running time with the exponential firing rate function is equal to $0.598$s (with standard deviation $0.12$s). For the linear one it is $0.597$s (with standard deviation $0.15$s). Thus, the execution time for the diffusion approximation does not depend on the firing rates.

% The main difference between the simulation of the diffusion and the PDMP is that in the first case the execution time for a fixed time interval depends linearly on the discretization step $\Delta$ only. In the case of the PDMP we advance on the scale by spikes: if an average spiking time for 1 neuron is $t^*$ seconds, then to record the activity of the system consisting of $N$ neurons on a time interval $T$ seconds, we need to simulate approximately $(T\cdot N)/t^*$ spikes. Thus, the execution time not only linearly depends on the number of neurons (this is observed on Figure \ref{fig:elapsed_time}), but also on the choice of the intensity functions. It means that the higher is its average value, the more spikes occur on a given interval and, consequently, the larger is the ratio $1/t^*$.

Finally, a summary of the performances of both frameworks (diffusion and PDMP), with respect to the parameters of the model, is given below.
\begin{itemize}
    \item In both cases, the execution time increases as the dimension of the system grows, i.e., as $\eta_k$ increases.
    \item For the diffusion, the execution time depends, in a linear manner, on the step size $\Delta$.
    \item For the PDMP, the execution time mainly depends, in a linear manner, on the number $N$ of neurons. To be precise, it also depends on the temporal mean value of the intensities of the two populations, which in turn depends, in a complex non-linear manner, on the parameters $\nu_k$, $\eta_k$ and $f_k$.
    \item Unless $N$ very small, the simulation of the diffusion requires much less computational cost than that of the PDMP. 
\end{itemize}

\subsubsection{Qualitative properties}

It remains to determine if the stochastic diffusion can really catch the behaviour of the underlying PDMP. To get an intuitive idea of how different processes behave when the number of neurons changes we look at some sample trajectories. We take one realisation of the PDMP and the diffusion process on a time interval of length $T = 300$ and plot them on Figure \ref{fig:PDMP_and_diffusion}, cutting the initial part in order to observe the process in its oscillatory regime. For simplicity, we focus only on the second (excitatory) population. The trajectories in the top panel are simulated with $N_2=10$, those in the middle panel with $N_2=50$ and those in the bottom panel with $N_2=100$.
%The number of neurons is varying: top figure is simulated with $N_2 = 10$, the middle --- with $N_2 = 50$, and the bottom figure with $N_2 = 100$. 

Let us mention that Figure \ref{fig:PDMP_and_diffusion} is not an illustration of Theorem \ref{thm:strong:approx:diffusion}. Indeed, the trajectories are not coupled in such a way that \eqref{eq:result:strong:approx} is satisfied. Up to our knowledge, there is no such result in the literature and the coupling involved in the proof of Theorem \ref{thm:strong:approx:diffusion} is not explicit. However, the figure illustrates the fact that the fluctuations of both trajectories vanish as $N$ goes to infinity and that both converge to the solution of the ODE \eqref{eq:ODE}. 

\begin{figure}
\begin{centering}
    \includegraphics[width=\textwidth]{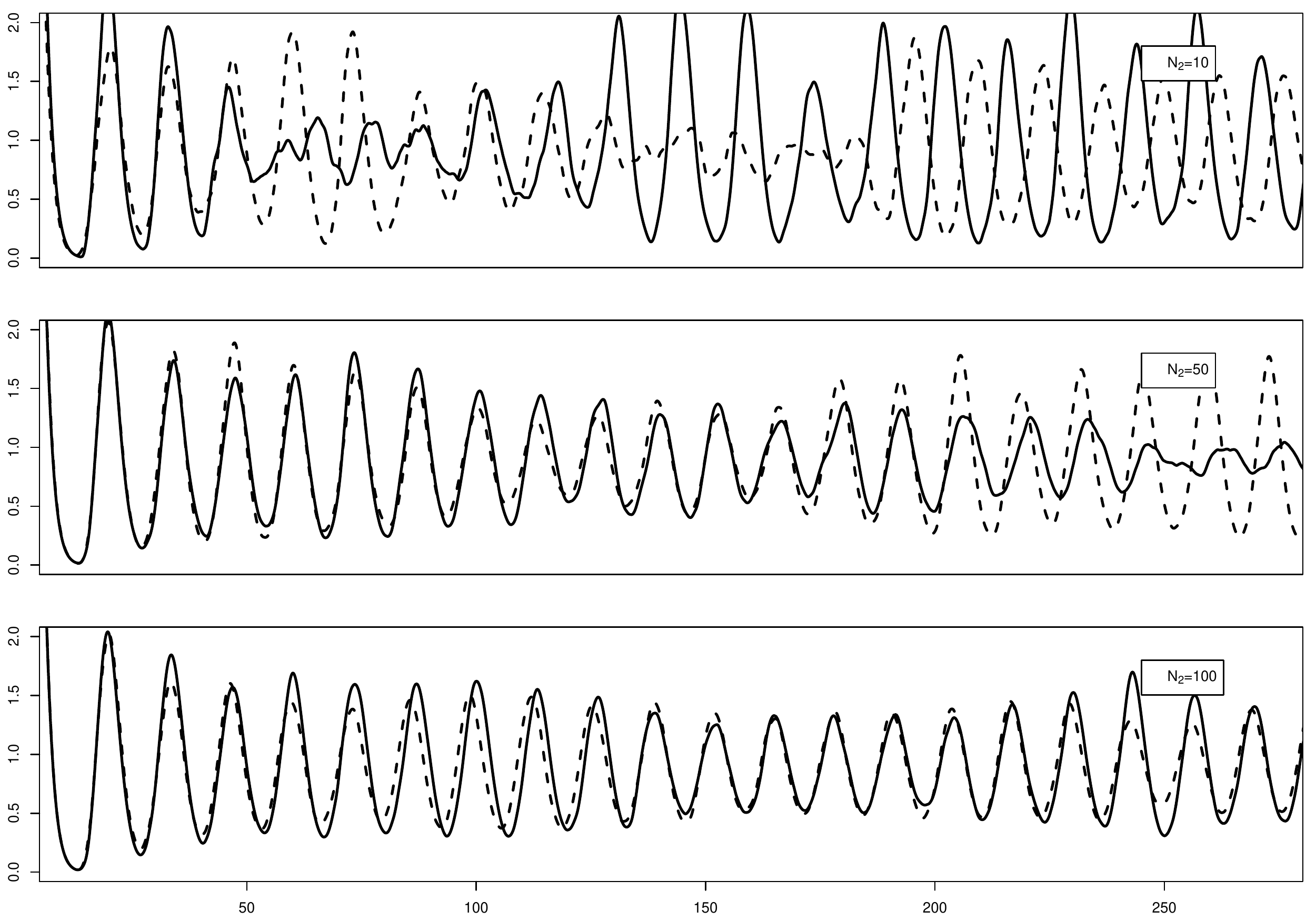}	
	\caption{Sample trajectories of the PDMP and the diffusion for varying $N$ (excitatory population). Solid line: main variable of $\bar{X}$, dashed line: main variable of $X$ (simulated with the Strang splitting scheme, using $\Delta = 0.01$).}
	\label{fig:PDMP_and_diffusion}
\end{centering}
\end{figure}
%Of course, taking $T=300$ is not yet enough to conclude about the long-time behaviour.  
As a final step, we are interested in the long-time behaviour of the processes. We simulate both processes ($\bar{X}$ and $X$) on a long-time interval, taking $T = 10^5$ and report the respective marginal empirical densities in Figure \ref{fig:PDMP_and_diffusion_density}. The densities of the PDMP are plotted with solid lines and those of the diffusion with dashed lines. Even for small $N$, the difference between the densities is negligible and their means are almost overlapping. As the number of neurons $N$ increases, we observe that the empirical densities converge to some compactly supported distribution. Note that the mean-field limit is given by the ODE \eqref{eq:ODE} as illustrated in Figure \ref{fig:PDMP_and_diffusion}. Thus we expect that the support of the limit distribution is given by the amplitude of the solution of the ODE. 

\begin{figure}
\begin{centering}
    \includegraphics[width=0.48\textwidth]{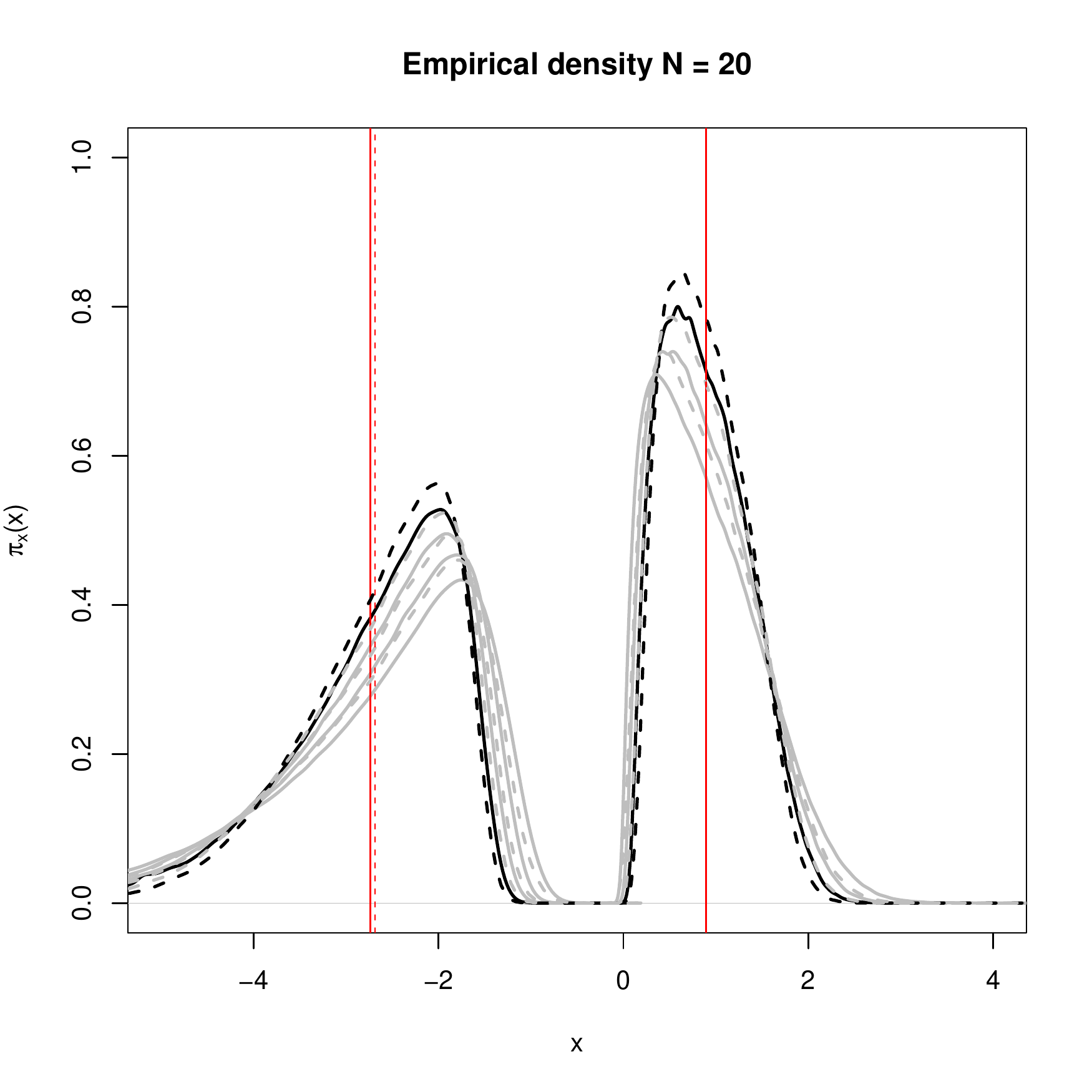}	
    \includegraphics[width=0.48\textwidth]{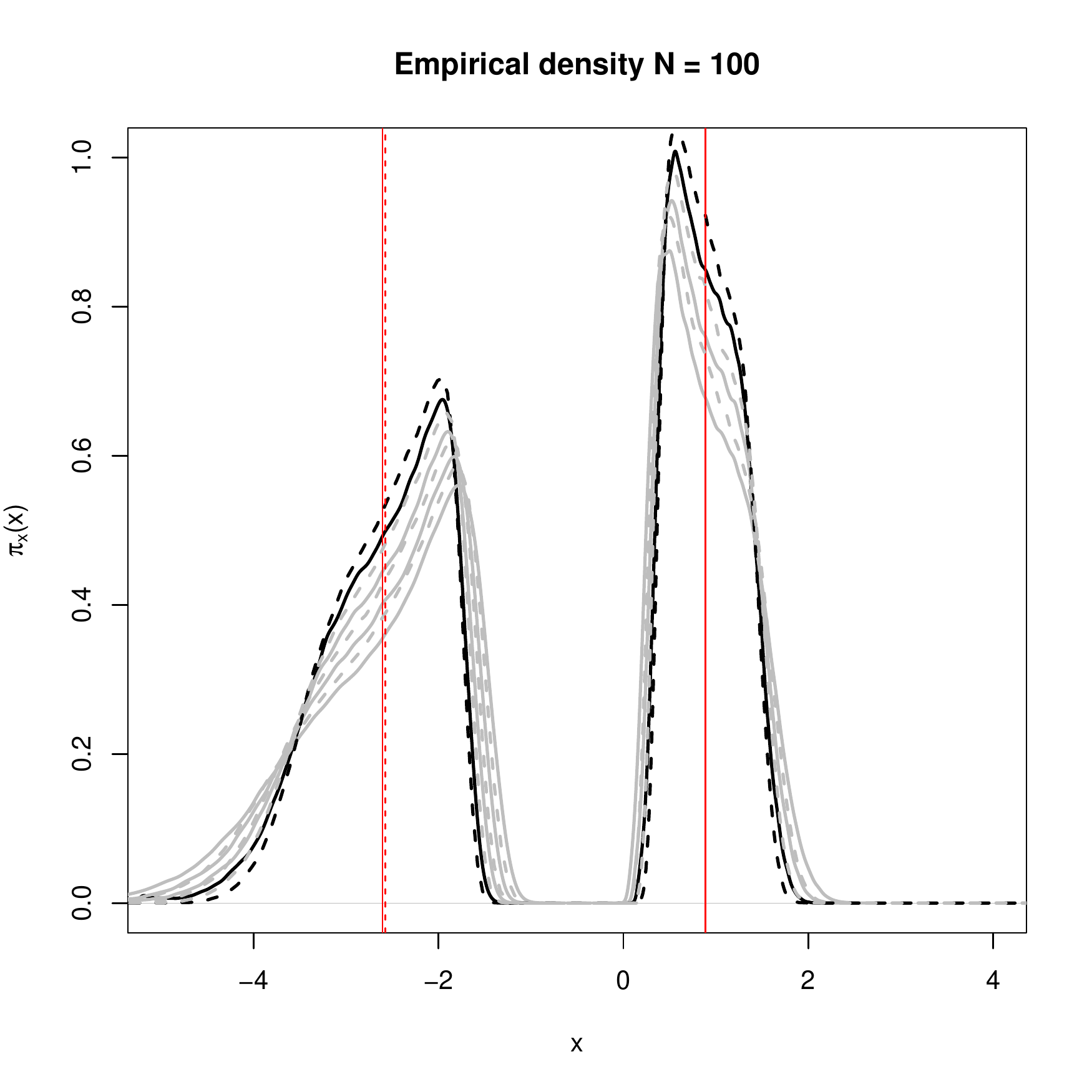}	
	\caption{Empirical density of PDMP and the diffusion for $N=20$ (left) and $N=100$ (right).  Solid line: $\bar{X}$, dashed line: $X$ (simulated with the splitting scheme, $\Delta = 0.1$, $T=10^5$). The red solid and dashed vertical lines denote the mean of the respective main variables.	}
	\label{fig:PDMP_and_diffusion_density}
\end{centering}
\end{figure}

\section*{Conclusions}

%The theoretical study of system \eqref{eq:hawkes_approximation} 
This work is thought to complement the papers by \cite{Ditlevsen2017eva} and \cite{Duarte2019}. First, we bridge the gap between the piece-wise deterministic Markov process \eqref{eq:PDMP_flow} and the solution of SDE \eqref{eq:hawkes_approximation} by proving a strong error bound on the distance between the two. Second, moment bounds of the diffusion process are derived.% can be compared to the expression of bounds for the PDMP \eqref{eq:PDMP_flow} in \cite{Duarte2019}. 

Further, since SDE \eqref{eq:hawkes_approximation} cannot be solved explicitly, two approximation schemes, based on the Lie-Trotter and the Strang splitting approaches, are proposed. They are proved to converge with mean-square order $1$ and to preserve the properties of the model.
In particular, the advantage of the proposed approximation methods is that they make a full use of the matrix exponential $e^{At}$, which describes the flow of the Markovian cascade \eqref{eq:PDMP_flow}. Thanks to this we are able to propagate the noise through all components of the system, thus preserving its hypoellipticity. Moreover, we show that the splitting schemes accurately reproduce the derived first and second moment bounds and that they preserve the ergodicity of the continuous process, even for large values of the discretization step $\Delta$.

These properties are particularly important when embedding the numerical scheme, for instance, into a statistical inference procedure. For example, maximum likelihood estimation techniques require the existence of a non-degenerate covariance matrix of the discretized process. For simulation-based inference methods (see \cite{Buckwar2019}), the performance of the Euler-Maruyama method may be acceptable for ``small enough" time steps. However, the use of smaller time steps drastically increases the computational cost, making the inference based on the Euler-Maruyama method computationally infeasible.
%and that is why splitting schemes are preferred. 
Moreover, even for arbitrary small time steps there is no guarantee that the Euler-Maruyama scheme preserves the model properties.

% More suitable for the introduction? -----> Note also that splitting methods are often applied to Hamiltonian type SDEs, see, e.g., \cite{Ableidinger2017,Leimkuhler2015,Leimkuhler2016,Mattingly2002,Milstein2004}. The specific structure of system \eqref{eq:hawkes_approximation}, even though it is not of Hamiltonian type, allows to carry over some of the results presented there.

%In Section \ref{section:simulation_PDMP},
In addition,
an exact simulation procedure of the Markovian cascade is proposed. A sharp upper bound, in order to get an efficient procedure, is provided and its performance is compared to the one given in \cite{Duarte2019}. When the number of neurons increases, the computational cost required for the PDMP simulation rises rapidly and cannot compete with the simulation of the diffusion via the splitting scheme. 

%Luckily, when the number of neurons is large, both
The Markovian cascade and the diffusion process show a similar behaviour. In particular, they possess matching empirical densities.
Thus, we conclude that the diffusion process describes the behaviour of the original neuronal model at a very good precision and at negligible computational cost, compared to the PDMP. 

\section*{Acknowledgments}
The authors thank Eva L\"ocherbach and Susanne Ditlevsen for their interest in this manuscript and numerous inspiring discussions. The authors also thank Markus Fischer for fruitful discussions about Brownian modulus of continuity.
J.C. was supported by CNRS under the grant PEPS JCJC \emph{MaNHawkes}.
A.M. was supported by LabEx MME-DII and the Laboratoire Jean Kuntzmann. I.T. was supported by the Austrian Science Fund (FWF): W1214-N15, Project DK 14.
A.M. and I.T. were also supported through the programme ``Research in Pairs" by the Mathematisches Forschungsinstitut Oberwolfach in 2020.

\bibliographystyle{apa}
%\bibliography{/home/yggdrasil/University/neuroscience/Literature/literature}   % name your BibTeX data base
\bibliography{literature}

\appendix
\section{Proofs}
\subsection{Proof of Theorem \ref{thm:strong:approx:diffusion}}
\label{app:strong_bound}

The proof of Theorem \ref{thm:strong:approx:diffusion} is mainly based on two lemmas which are stated before the proof. The first lemma concerns the coupling between a Poisson process and a Brownian motion. Its proof can be found in \citet[Section 5.5]{Ethier2009} (the exponential moments can be deduced from the proof of Corollary 5.5.5).
\begin{lemma}\label{lemma:KMT}
A standard Poisson process $(\Pi_t)_{t\geq 0}$ and a standard one-dimensional Brownian motion $ (B_t)_{t\geq 0}$  can be constructed on the same probability space such that 
$$ \sup_{t \geq 0} \frac{ | \Pi_t -t -  B_t|}{ \log (2 \vee t) } \le \Xi < \infty $$
almost surely, where $\Xi$ is a random variable having exponential moments.
\end{lemma}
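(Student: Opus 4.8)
The plan is to reconstruct the Koml\'os--Major--Tusn\'ady (KMT) coupling, specialized to a Poisson process. Write $M_t = \Pi_t - t$ for the compensated Poisson process, a martingale with stationary independent increments and $\mathrm{Var}(M_t)=t=\mathrm{Var}(B_t)$, so no rescaling is needed at the process level. The goal is to build, on a common probability space, a Brownian motion $B$ for which $|M_t - B_t|$ grows no faster than $\log t$. The engine of the construction is a \emph{dyadic refinement}. On a fixed block $[0,2^m]$ I would exploit the conditional structure of the Poisson process: given the total number $K$ of points in a dyadic interval, the points are i.i.d.\ uniform, so the number falling in the left half is $\mathrm{Bin}(K,1/2)$, and the refinement recurses independently on each half. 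Mirroring this on the Gaussian side---a Brownian motion conditioned on its two endpoints is a Brownian bridge whose midpoint value is Gaussian---I would couple, level by level from the coarsest interval down to unit intervals, each conditional Binomial count to the matching conditional Gaussian.

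First I would establish the quantile-coupling (Tusn\'ady-type) inequality: coupling $N\sim\mathrm{Bin}(n,1/2)$ to $Z\sim N(0,1)$ through the monotone quantile transform $N=F^{-1}(\Phi(Z))$ yields a bound of the form $|2N-n-\sqrt{n}\,Z|\le C(1+Z^2)$, with the error having sub-exponential tails uniformly in $n$. This pointwise comparison between a centered, rescaled Binomial and a standard normal is the single most delicate ingredient and, in my view, the main obstacle: it rests on sharp, non-asymptotic Gaussian approximations to the Binomial (equivalently Poisson) distribution functions, and must hold uniformly down to small parameters so that the errors remain summable at every dyadic level.

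Second, I would propagate these couplings through the dyadic tree. At level $\ell$ there are $2^\ell$ conditionally independent Tusn\'ady couplings, each contributing an error of constant order times a $(1+Z^2)$ factor. The discrepancy between the Poisson count at a dyadic endpoint and the corresponding Brownian value is exactly the sum of the per-node errors along the path from the root to that endpoint, so summing over levels $\ell=0,\dots,m$ telescopes to a quantity of order $\sum_\ell C(1+Z_\ell^2)\asymp m\asymp\log(2^m)$, which is the advertised $\log t$ rate on the block. Controlling $\sup_t|M_t-B_t|$ rather than only the dyadic-endpoint values additionally requires bounding the oscillation of $M$ and of $B$ within unit intervals; since both increments have exponential tails, a union bound over the $2^m$ subintervals costs only an extra logarithmic factor absorbed into the same rate.

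Finally, I would pass from a fixed block to the whole half-line by concatenating the constructions over dyadic blocks $[2^j,2^{j+1}]$ and setting $\Xi=\sup_{t\ge 0}|M_t-B_t|/\log(2\vee t)$. Almost-sure finiteness and the exponential moments of $\Xi$ follow from a Borel--Cantelli and Markov-inequality argument: the sub-exponential tails of the per-level errors, summed against the geometrically growing block sizes, give $\mathbb{P}(\Xi>x)\le Ce^{-cx}$ for some $c>0$, whence $\mathbb{E}[e^{\theta\Xi}]<\infty$ for small $\theta$. Throughout, the independent-increments structure of the Poisson process is what keeps the couplings on disjoint blocks independent and the tail estimates multiplicative, so that stitching the blocks together preserves both the logarithmic rate and the exponential-moment control, exactly as recorded in \citet[Section 5.5]{Ethier2009}.
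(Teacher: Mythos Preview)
The paper does not actually prove this lemma: it simply cites \citet[Section 5.5]{Ethier2009} and remarks that the exponential moments can be read off from the proof of Corollary 5.5.5 there. Your outline is a faithful high-level sketch of precisely that argument---the KMT/Hungarian construction specialized to the Poisson process via the conditional $\mathrm{Bin}(K,1/2)$ structure, Tusn\'ady's quantile inequality, and dyadic refinement---and you even cite the same reference at the end. So your approach coincides with the paper's (by-reference) approach; if anything, you have written more than the paper does, since the authors treat the result as known and do not reproduce any of the construction.
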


The second lemma concerns the modulus of continuity for the Brownian motion. It is stated in \cite{Kurtz1978} without a proof. Hence, for the sake of completeness, we provide a proof which is an adaptation of the arguments presented in the appendix of \cite{fischer2009moments}.
\begin{lemma}\label{lem:mod:cont:BM}
Let $B$ be a standard Brownian motion and $T$ a positive time. Then,
\begin{equation}
    M := \sup_{t,s\leq T} \frac{|B_t - B_s|}{\sqrt{|t-s|(1+\log(T/|t-s|))}},
\end{equation}
is a finite random variable such that $M^2$ has finite exponential moments.
\end{lemma}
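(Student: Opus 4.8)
The plan is to apply the Garsia--Rodemich--Rumsey (GRR) inequality, following the appendix of \cite{fischer2009moments}, with the exponential Young function $\Psi(x)=e^{\lambda x^{2}}-1$ for a fixed $\lambda<1/2$ to be tuned, and the gauge $p(u)=\sqrt{u}$. Both are continuous and strictly increasing, $\Psi$ is convex with $\Psi(0)=p(0)=0$ and $\Psi(\infty)=\infty$, and $\Psi^{-1}(y)=\sqrt{\lambda^{-1}\log(1+y)}$. Introducing the random Garsia functional
\[
D:=\int_{0}^{T}\!\!\int_{0}^{T}\Psi\!\left(\frac{|B_{t}-B_{s}|}{\sqrt{|t-s|}}\right)ds\,dt,
\]
the GRR inequality yields, for all $s\le t\le T$ with $h:=t-s$ and using $dp(u)=\tfrac{1}{2\sqrt u}\,du$,
\[
|B_{t}-B_{s}|\le 8\int_{0}^{h}\sqrt{\lambda^{-1}\log\!\big(1+4D/u^{2}\big)}\;\tfrac{1}{2\sqrt{u}}\,du .
\]
The whole problem then reduces to (i) controlling the moments of $D$ and (ii) extracting the claimed gauge $\sqrt{h(1+\log(T/h))}$ from this integral while tracking the dependence on $D$.

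For the moments of $D$, note that for fixed $s\neq t$ the normalized increment $(B_{t}-B_{s})/\sqrt{|t-s|}$ is a standard Gaussian $Z$, so $\mathbb{E}[\Psi(|Z|)]=\mathbb{E}[e^{\lambda Z^{2}}]-1=(1-2\lambda)^{-1/2}-1<\infty$ precisely because $\lambda<1/2$. Tonelli's theorem gives $\mathbb{E}[D]=T^{2}\big((1-2\lambda)^{-1/2}-1\big)<\infty$, so in particular $D<\infty$ almost surely and the GRR hypothesis holds. More is true: using $(e^{x}-1)^{q}\le e^{qx}$ for $x\ge0$ together with Minkowski's integral inequality, one obtains $\|D\|_{q}\le T^{2}(1-2q\lambda)^{-1/(2q)}<\infty$ for every $q<1/(2\lambda)$, i.e.\ $D$ has finite moments of all orders below $1/(2\lambda)$.

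To evaluate the integral, I would substitute $u=hv$ and use $\log\!\big(1+4D/(h^{2}v^{2})\big)\le \log\!\big(1+4D/h^{2}\big)+2\log(1/v)+\log2$ together with $\sqrt{a+b+c}\le\sqrt a+\sqrt b+\sqrt c$ and the finiteness of $\int_{0}^{1}v^{-1/2}\,dv$ and $\int_{0}^{1}v^{-1/2}\sqrt{\log(1/v)}\,dv$; the right-hand side is then bounded by $C\lambda^{-1/2}\sqrt{h}\,\sqrt{1+\log(1+4D/h^{2})}$. Since $h\le T$ gives $\log(1+4D/h^{2})\le \log(1+4D/T^{2})+2\log(T/h)$, and since $\log(T/h)\ge0$ allows the factorization $1+\log(1+4D/h^{2})\le 2\big(1+\log(1+4D/T^{2})\big)\big(1+\log(T/h)\big)$, one arrives at
\[
\frac{|B_{t}-B_{s}|}{\sqrt{h\,(1+\log(T/h))}}\le C\lambda^{-1/2}\sqrt{1+\log\!\big(1+4D/T^{2}\big)} .
\]
Taking the supremum over $s,t\le T$ shows $M<\infty$ and $M^{2}\le c_{1}+c_{2}\log(1+4D/T^{2})$ for constants $c_{1},c_{2}$ depending only on $\lambda,T$. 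Hence $e^{\theta M^{2}}\le e^{\theta c_{1}}(1+4D/T^{2})^{\theta c_{2}}$, and choosing $\theta$ small enough that $\theta c_{2}<1/(2\lambda)$ makes the right-hand side integrable by the moment bound on $D$, so $M^{2}$ has finite exponential moments.

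The main obstacle is the bookkeeping in the last step: the GRR bound must be massaged so that the dependence on the random functional $D$ is only \emph{logarithmic}, because a polynomial (or stronger) dependence would destroy the exponential-moment conclusion, given that $D$ possesses only finitely many finite moments. Producing the gauge $\sqrt{h(1+\log(T/h))}$ cleanly, uniformly over $h\le T$ and with exactly the right logarithmic $D$-dependence, is the delicate point; by contrast the Gaussian moment computation and the GRR application itself are routine.
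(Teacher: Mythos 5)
Your proof is correct and follows essentially the same route as the paper's: the Garsia--Rodemich--Rumsey inequality with an exponential Young function, extraction of the gauge $\sqrt{h(1+\log(T/h))}$ with only logarithmic dependence on the Garsia functional, and exponential moments of $M^2$ obtained by taking the exponent small relative to the finitely many finite moments of that functional. Your parametrization (tuning $\lambda<1/2$ in $\Psi(x)=e^{\lambda x^2}-1$ with gauge $\sqrt{u}$) is equivalent to the paper's (tuning $c>1$ in the gauge $\sqrt{cu}$ with $\Psi(x)=e^{x^2/2}-1$) via $\lambda=1/(2c)$, and your direct Minkowski-inequality bound on the moments of $D$ simply makes explicit the estimate the paper cites from the literature.
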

\begin{proof}
Thanks to the scaling properties of Brownian motion, it is sufficient to prove the statement for $T=1$.
According to \cite{fischer2009moments}, let $c>1$ and define two increasing functions $\Psi$ and $\mu$ by
\begin{equation}
    \Psi(x) = e^{x^2/2}-1 \quad \text{and} \quad \mu(x) = \sqrt{cx},
\end{equation}
for all $x\geq 0$.
Let now $\xi$ be the random variable defined by
\begin{equation}
    \xi = \int_0^1 \int_0^1 \Psi\left( \frac{|B_t - B_s|}{\mu(|t-s|)}\right) dtds.
\end{equation}
The Garsia–Rodemich–Rumsey inequality \cite[Theorem 2.1.3.]{stroock2007multidimensional} implies that 
\begin{equation}
    |B_t - B_s| \leq 8 \int_0^{|t-s|} \Psi^{-1}\left( \frac{4\xi}{x^2} \right) \mu'(x) dx,
\end{equation}
with $\Psi^{-1}(y) = \sqrt{2\log(1+y)}$ and $\mu'(x) = (\sqrt{c}/2) x^{-1/2}$.
Yet, for $0<x<1$,
\begin{equation}
    \Psi^{-1}\left( \frac{4\xi}{x^2} \right) = \sqrt{2} \sqrt{\log(4\xi+x^2) + 2\log(1/x)}  \leq \sqrt{2} \sqrt{\log(4\xi+1)} + 2 \sqrt{\log(1/x)}.
\end{equation}
Combining the last two equations, one gets for all $h$,
\begin{equation}\label{eq:control:modulus:1}
    \sup_{|t-s|\leq h} |B_t - B_s| \leq 4\sqrt{2c} \sqrt{\log(4\xi+1)} \int_0^{h} \frac{dx}{\sqrt{x}} + 8\sqrt{c} \int_0^h \sqrt{\log(1/x)} \frac{dx}{\sqrt{x}}.
\end{equation}
The second term can be bounded thanks to
\begin{align}
    \int_0^h \sqrt{\log(1/x)} \frac{dx}{\sqrt{x}} &= \int_0^h \left( \sqrt{\log(1/x)} - \frac{1}{\sqrt{\log(1/x)}}\right) + \frac{1}{\sqrt{\log(1/x)}} \frac{dx}{\sqrt{x}}\\
    & \leq 2 \sqrt{h\log(1/h)} + 4 \sqrt{h},
\end{align}
using (when $h>e^{-1}$) the fact that
\begin{equation}
    \int_{e^{-1}}^h \frac{1}{\sqrt{x\log(1/x)}} dx =  \int_{e^{-1}}^h 2\sqrt{x} \frac{1}{x \sqrt{\log(1/x)}} dx \leq 2\sqrt{h} (1 - \sqrt{\log(1/h)}).
\end{equation}
Going back to Equation \eqref{eq:control:modulus:1}, for some constant $C$ which does not depend on $c$, one has that
\begin{equation}
    \sup_{|t-s|\leq h} |B_t - B_s| \leq C \sqrt{c} \left( \sqrt{\log(4\xi+1)} + 1 \right) \sqrt{h (1+\log(1/h))}.
\end{equation}
Note that the random variable $M$ defined in the statement of the lemma satisfies
\begin{equation}
    M = \sup_{0<h<1} \frac{\sup_{|t-s|\leq h} |B_t - B_s|}{\sqrt{h (1+\log(1/h))}} \leq C \sqrt{c} \left( \sqrt{\log(4\xi+1)} + 1 \right),
\end{equation}
so that 
$$\mathbb{E}\left[ e^{\alpha M^2} \right] \leq  \mathbb{E}\left[ e^{2 \alpha c C^2 \left( \log(4\xi + 1) + 1\right)} \right] \leq e^{2 \alpha c C^2} \mathbb{E}\left[ (4\xi + 1)^{2\alpha c C^2}\right].$$
To conclude, we refer to the control of the moments of $\xi$ given in the appendix of \cite{fischer2009moments}. It states in particular that $\mathbb{E}[ (4\xi + 1)^{2\alpha c C^2}]$ is finite as soon as $2\alpha c C^2 < c$ which is granted if we take $\alpha$ small enough.
\end{proof}

Before going through the proof of the Theorem, let us give some alternative representation of Equation \eqref{eq:PDMP_flow} and some sketch of the proof.
Thanks to the time change property of point processes (see \citet[Section II.6.]{bremaud1981point} for instance), there exists two independent standard (i.e., with rate equal to one) Poisson processes $\Pi^1$ and $\Pi^2$ such that $\bar{Z}^k_t = N_k^{-1}\, \Pi^k_{\bar{\Lambda}^k_t}$ where $\bar{\Lambda}^k_t$ is the integrated intensity of $\bar{Z}^k_t$, that is
\begin{equation}
    \bar{\Lambda}^k_t = N_k \int_0^t f_k(\bar{X}^{k+1,1}_s) ds.
\end{equation}
This time-change property is an analogous martingale property to the time-change property for diffusions.
Then, the integrated form of \eqref{eq:PDMP_flow} is given by
\begin{equation}\label{eq:Xbar:evolution:time:change}
    \bar{X}_t = x_0 + \int_0^t A \bar{X}_s ds + c \,\bar{Z}_t = \int_0^t A \bar{X}_s ds + c 
    \begin{pmatrix}
    N_1^{-1}\,\Pi^1_{\bar{\Lambda}^1_t}\\
    N_2^{-1}\,\Pi^2_{\bar{\Lambda}^2_t}
    \end{pmatrix}.
\end{equation}
In a similar way, the SDE can be written with respect to two time-changed Brownian motions and the general idea of the proof is then to couple the standard Poisson processes $\Pi^k$ with the Brownian motions.

\begin{proof}[Proof of Theorem \ref{thm:strong:approx:diffusion}]
It is more convenient to first prescribe the Brownian motions and then couple them with Poisson processes. That is exactly how we proceed below.  Let $Y$ be the solution of \eqref{eq:hawkes_approximation} with respect to some two dimensional Brownian motion $W=(W^1,W^2)^T$. Thanks to the time change property of the Brownian motion (see \citet[Theorem 2.12.]{Ethier2009} for instance), let $B^k$ be the Brownian motion defined by
\begin{equation}
    B^k_t = \int_0^{\tau^k(t)} \sqrt{N_k f_{k}(Y^{k,1}_s)}  dW^{k}_s,
\end{equation}
where $\tau^k(t)$ is the stopping time satisfying
\begin{equation*}
    t = N_k \int_0^{\tau^k(t)} f_{k}(Y^{k,1}_s) ds.
\end{equation*}
Then, $Y$ can be written as follows
\begin{equation}\label{eq:Y:evolution:time:change}
Y_t = x_0+ \int_0^t A Y_s ds + \int_0^t \Gamma 
\begin{pmatrix}
f_2(Y^{2,1}_s)\\
f_1(Y^{1,1}_s)
\end{pmatrix}
ds + \Gamma 
\begin{pmatrix}
N_1^{-1} B^1_{\Lambda^1_t}\\
N_2^{-1} B^2_{\Lambda^2_t}
\end{pmatrix},
\end{equation}
where 
\begin{equation}\label{eq:Y:time:change}
\Lambda^k_t = N_k \int_0^t f_{k}(Y^{k,1}_s) ds.
\end{equation}

We are now in the position to use the coupling with Poisson processes. Let $\Pi^k$ be the Poisson process given by Lemma \ref{lemma:KMT} with associated random variable $\Xi_k$. Now, let $\bar{X}$ be defined as in \eqref{eq:Xbar:evolution:time:change}. 
Then,
\begin{equation}\label{eq:Xbar:evolution:time:change:plus:rest}
\bar{X}_t = x_0 + \int_0^t A \bar{X}_s ds + \int_0^t \Gamma 
\begin{pmatrix}
f_2(\bar{X}^{2,1}_s)\\
f_1(\bar{X}^{1,1}_s)
\end{pmatrix}
ds + \Gamma 
\begin{pmatrix}
N_1^{-1} B^1_{\bar{\Lambda}^1_t} + R^1_t\\
N_2^{-1} B^2_{\bar{\Lambda}^2_t} + R^2_t
\end{pmatrix},
\end{equation}
where 
$$ R^k_t = \frac{1}{N_k} \left( \Pi^k_{\bar \Lambda_t^k } - \bar \Lambda_t^k - B^k_{\bar \Lambda_t^k } \right).$$
Thanks to Lemma \ref{lemma:KMT}, 
\begin{align}
| R^k_t | \leq \frac{1}{N_k} \Xi_k \log (2\vee \bar\Lambda_t^k) &\leq  \Xi_k \left( \frac{\log N_k}{N_k} + \frac{\log t }{N_k} + \frac{1}{N_k} \right),  \nonumber  \\
& \leq C\, \Xi_k \left( \frac{\log N}{N} + \frac{\log t }{N} + \frac{1}{N} \right), \label{eq:bound:rest:term}
\end{align}
for some constant $C$, where we used that $ \bar\Lambda_t^k \le N_k t f_k^{max}$ and $N/N_k$ is bounded for $N$ and $N_k$ large enough. 

Let us denote $G^N(t) = \sup_{s\leq t} N \| \bar{X}_s - Y_s \|$ where $\|\cdot \|$ denotes the sup norm on $\mathbb{R}^\kappa$ here and below. Combining \eqref{eq:Y:evolution:time:change} and \eqref{eq:Xbar:evolution:time:change:plus:rest} as well as using the Lipschitz continuity of $f_k$ (with respect to constant $L_k$) give
\begin{multline}\label{eq:decomposition:pathwise:estimate}
    \| \bar{X}_t - Y_t \| \leq  \int_0^t \| A (\bar{X}_s - Y_s ) \| ds + \max\{|c_1|L_2,|c_2|L_1\} \int_0^t \| \bar{X}_s - Y_s \| ds \\
    + \max_k\left\{ |c_{k-1}| \left( N_k^{-1} \left| B^k_{\bar{\Lambda}_t^k} - B^k_{\Lambda_t^k}\right| + |R^k_t| \right) \right\}. 
\end{multline}
Then, since the operator norm $||A||$ corresponding to the sup norm is finite, Grönwall's lemma yields
\begin{equation}\label{eq:gronwall:gT}
    G^N(T) \leq C_1 \max_k \left\{ \sup_{t\leq T} \left| B^k_{\bar{\Lambda}_t^k} - B^k_{\Lambda_t^k}\right| + N| R^k_t| \right\} e^{C_2 T}
\end{equation}
for two deterministic constants $C_1$ and $C_2$ which do neither depend on $N$ nor on $T$. Hence, it only remains to estimate the Brownian increments. This can be done via the modulus of continuity of Brownian motion. Indeed, for $t\leq T$, $\bar{\Lambda}_t^k$ and $\Lambda_t^k$ are bounded by $N T f_k^{max}$ so Lemma \ref{lem:mod:cont:BM} gives
\begin{equation}
    \left| B^k_{\bar{\Lambda}_t^k} - B^k_{\Lambda_t^k}\right| \leq M_k \sqrt{\left| \bar{\Lambda}_t^k - \Lambda_t^k\right| (1+\log(N f_k^{max} T /|\bar{\Lambda}_t^k - \Lambda_t^k|))},
\end{equation}
where $M_k$ is some random variable defined in the lemma.
For all $a>0$, the function $x\mapsto \sqrt{x(1+\log(a/x))}$ is increasing for $0<x\leq a$ and Lipschitz continuity of $f_k$ gives
$$
|\bar{\Lambda}_t^k - \Lambda_t^k| = N \left| \int_0^t f_k(\bar{X}^{k,1}_s) - f_k(Y^{k,1}_s) ds \right| \leq C \int_0^t G^N(s) ds \leq CTG^N(T)
$$
so that 
\begin{equation}
    \left| B^k_{\bar{\Lambda}_t^k} - B^k_{\Lambda_t^k}\right| \leq M_k \sqrt{CTG^N(T) (1+\log(Nf_k^{max}/CG^N(T)))}.
\end{equation}
On the event where $G^N(T)<1$, \eqref{eq:result:strong:approx} holds. If $G^N(T)\geq 1$ then the equation above implies
\begin{equation}
    \left| B^k_{\bar{\Lambda}_t^k} - B^k_{\Lambda_t^k}\right| \leq M_k \sqrt{CTG^N(T) (1+\log(Nf_k^{max}/C))}
\end{equation}
and so coming back to \eqref{eq:gronwall:gT} one has
\begin{multline}
    G^N(T) \leq C_1 \left( M \sqrt{CT (1+\log(N f^{max} /C))}\sqrt{G^N(T)} \right. \\
    + \left. N \max_k \sup_{t\leq T} | R^k_t| \right) e^{C_2 T},
\end{multline}
with $f^{max}=\max\{f_1^{max},f_2^{max}\}$ and $M=\max\{M_1,M_2\}$.
The inequality above is of order 2 with respect to $x=\sqrt{G^N(T)}$. Yet, the positive values of $x$ such that $p(x) = x^2 +bx +c $ is negative are such that $x^2 \leq b^2 - c$. Hence,
\begin{equation}
    G^N(T) \leq C \left( M^2 T (1+\log(N f^{max}/C)) + N\max_k \sup_{t\leq T} | R^k_t| \right) e^{2C_2 T}.
\end{equation}
Finally, \eqref{eq:result:strong:approx} follows from the control of $|R^k_t|$ given by \eqref{eq:bound:rest:term}.
\end{proof}

\subsection{Proof of Theorem \ref{thm:second_moment_bounds_process}}
\label{app:second_moment}

 Recall that the components of the process $X^k$ are given by
\begin{align*}
X^{k,j}_t &= \left(e^{At} X_{0}\right)^{k,j}+\int_0^t c_k f_{k+1}(X^{{k+1},1}_s)  \frac{e^{-{\nu_k}(t-s)}}{(\eta_k+1-j)!} (t-s)^{\eta_k+1-j} ds \\ &+\frac{1}{\sqrt{N}} \int_0^t \frac{c_k}{\sqrt{p_{k+1}}}\sqrt{f_{k+1}(X^{{k+1},1}_s)}  \frac{e^{-{\nu_k}(t-s)}}{(\eta_k+1-j)!} (t-s)^{\eta_k+1-j}  dW^{k+1}_s.
\end{align*}
Squaring the above expression yields
\begin{equation*}
    (X^{k,j}_t)^2  = T_1(t) + T_2(t)+T_3(t)+ T_4(t) + T_5(t)+T_6(t),
\end{equation*}
where
\begin{align*}
    T_1 & = \left((e^{At}x_0)^{k,j}\right)^2,\\ 
    T_2 & = \left(\int_0^t c_k f_{k+1}(X^{k+1,1}_s)  \frac{e^{-\nu_{k}(t-s)}}{(\eta_k+1-j)!} (t-s)^{\eta_k+1-j} ds \right)^2,\\ 
    T_3 & = \left(\frac{1}{\sqrt{N}} \int_0^t \frac{c_k}{\sqrt{p_{k+1}}}\sqrt{f_{k+1}(X^{k+1,1}_s)} \frac{e^{-\nu_{k}(t-s)}}{(\eta_k+1-j)!} (t-s)^{\eta_k+1-j}  dW^{k+1}_s \right)^2,\\ 
    T_4 & = 2 (e^{At}x_0)^{k,j} \int_0^t c_k f_{k+1}(X^{k+1,1}_s) \frac{e^{-\nu_{k}(t-s)}}{(\eta_k+1-j)!} (t-s)^{\eta_k+1-j} ds,\\
    T_5 & = 2 \int_0^t c_k f_{k+1}(X^{{k+1},1}_s) \frac{e^{-\nu_{k}(t-s)}}{(\eta_k+1-j)!} (t-s)^{\eta_k+1-j} ds \\ & \quad \cdot \frac{1}{\sqrt{N}} \int_0^t \frac{c_k}{\sqrt{p_{k+1}}}\sqrt{f_{k+1}(X^{{k+1},1}_s)} \frac{e^{-\nu_{k}(t-s)}}{(\eta_k+1-j)!} (t-s)^{\eta_k+1-j}  dW^{k+1}_s,\\ 
    T_6 & = 2 (e^{At}x_0)^{k,j} \frac{1}{\sqrt{N}} \int_0^t \frac{c_k}{\sqrt{p_{k+1}}}\sqrt{f_{k+1}(X^{{k+1},1}_s)} \frac{e^{-\nu_{k}(t-s)}}{(\eta_k+1-j)!} (t-s)^{\eta_k+1-j}  dW^{k+1}_s.\\ 
\end{align*}
First, we note that $\mathbb{E}[T_6(t)]=0$ and that $\mathbb{E}[T_1(t)]=T_1(t)$. Since the intensity function is bounded by $0<f_{k+1}\leq f_{k+1}^{\max}$, we have that
\begin{equation*}
\mathbb{E}[T_4(t)]\leq \max\left\{0, \frac{c_k f_{k+1}^{\max}}{(\eta_k+1-j)!} \right\}2(e^{At}x_0)^{k,j} \int_0^t   {e^{-\nu_{k}(t-s)}} (t-s)^{\eta_k+1-j} ds.
\end{equation*}
%$\mathbb{E}[T_4(t)]\leq \max\{0,c_k  \}2(e^{At}x_0)^{k,j} \int_0^t  f_{k+1}(X^{k+1,1}_s) \frac{e^{-\nu_{k+1}(t-s)}}{(\eta_k+1-j)!} (t-s)^{\eta_k+1-j} ds$. 
Further, applying It\^{o}'s isometry gives  
\begin{equation*}
    \mathbb{E}[T_3(t)]=\frac{c_k^2}{N p_{k+1} ((\eta_k+1-j)!)^2} \int_0^t \mathbb{E}[f_{k+1}(X^{{k+1},1}_s)]  {e^{-2{\nu_k}(t-s)}} (t-s)^{2(\eta_k+1-j)}  ds.
\end{equation*}
Using again the fact that $f_{k+1}<f_{k+1}^{\max}$ results in
\begin{equation*}
    \mathbb{E}[T_3(t)] \leq \frac{1}{N} \frac{c_k^2}{p_{k+1}}  \frac{f_{k+1}^{\max}}{((\eta_k+1-j)!)^2} \int_0^t  e^{-2{\nu_k}(t-s)} (t-s)^{2(\eta_k+1-j)}  ds.
\end{equation*}
Moreover, since $f_{k+1}$ is bounded, also $(T_2(t))^2$ is bounded, and thus it follows from the proof of Theorem \ref{thm:moment_bounds_process} that
\begin{equation*}
    \mathbb{E}[T_2(t)]\leq  \left(\frac{c_k f_{k+1}^{\max}}{(\eta_k+1-j)!} \right)^2 \left( \int_0^t  e^{-{\nu_k}(t-s)} (t-s)^{(\eta_k+1-j)} ds \right)^2.
\end{equation*}
Applying the Cauchy-Schwarz inequality gives that
\begin{equation*}
    \mathbb{E}[T_6(t)] \leq 2 \left( \mathbb{E}[T_2(t)] \mathbb{E}[T_3(t)] \right)^{1/2}.
\end{equation*}
Combining the above results and using that 
\begin{equation*}
    \int_0^t  e^{-2{\nu_k}(t-s)}  (t-s)^{2(\eta_k+1-j)} ds = \frac{(2(\eta_k+1-j))!}{(2\nu_k)^{2(\eta_k+1-j)+1}} \left[ 1-e^{-2\nu_k t} \sum_{l=0}^{2(\eta_k+1-j)} \frac{(2\nu_k t)^l}{l!}  \right]
\end{equation*}
proves the statement.

\subsection{Proof of Lemma \ref{lemma:irreducibility}}
\label{app:control}
% Denote $\bar{\eta}=\max\{\eta_1,\eta_2\}$.

% \begin{prop}
% For any $x,y\in \mathbb{R}^\kappa$, there exists some sequence of 2-dimensional vectors $(\xi_i)_{i=1,\dots,\bar{\eta}+1}$ such that 
% \begin{equation}
%     y = \underbrace{\left(\psi_\Delta[\xi_{\bar{\eta}+1}] \circ \dots \circ \psi_\Delta[\xi_1] \right)}_{\max_k\{\eta_k+1\}} (x)
% \end{equation}
% \end{prop}

In order to rely on a linear control problem, we decouple the two populations and treat the non-linear interactions in a second step as it is done in \cite{Locherbach2019} for the continuous-time framework. Let us rewrite the numerical scheme \eqref{eq:split_scheme_LT2} as given by the one-step mapping $\psi_\Delta$ defined by
\begin{equation}
    \psi_\Delta[\xi] (x) = e^{A\Delta}\left( x + \Delta  B(x) + \frac{\sqrt{\Delta}}{\sqrt N} \sigma(x)\xi \right) = 
    \begin{pmatrix}
    \psi_\Delta[\xi] (x)_1\\
    \psi_\Delta[\xi] (x)_2
    \end{pmatrix},
\end{equation}
where 
\begin{equation}
    \psi_\Delta[\xi] (x)_k = e^{A_{\nu_k}\Delta} x^k + \left(\Delta c_k f_{k+1}(x^{k+1,1}) + \frac{\sqrt{\Delta}}{\sqrt N} \frac{c_k}{\sqrt{p_{k+1}}} \sqrt{f_{k+1}(x^{k+1,1})} \xi^{k+1}  \right) b_k,
\end{equation}
with $b_k = e^{A_{\nu_k}\Delta}(0,\dots,0,1)^T = \left( \frac{\Delta^{\eta_k}}{\eta_k!}, \frac{\Delta^{\eta_k-1}}{(\eta_k-1)!}, \dots, 1 \right)^T \in \mathbb{R}^{\eta_k+1}$. Now let us study the following discrete dynamical systems: $x^k(0) = x^k$ and for all $t\in \mathbb{N}$,
\begin{equation}\label{eq:discrete:control:system}
    x^k(t+1) = e^{A_{\nu_k}\Delta} x^k(t) + b_k u^k(t+1),
\end{equation}
where $(u^k(t))_{t\in \mathbb{N}^*}$ is a sequence of real numbers that will be specified below. This system is controllable as soon as $b_k, e^{A_{\nu_k}\Delta}b_k,\dots, e^{\eta_k A_{\nu_k}\Delta}b_k$ are linearly independent (see Theorem 6.D1 in \cite{chen1998linear}).
For all $j=0,\dots, \eta_k$, we have
\begin{equation}
    e^{jA_{\nu_k}\Delta}b_k = \left( \frac{((j+1)\Delta)^{\eta_k}}{\eta_k!}, \frac{((j+1)\Delta)^{\eta_k-1}}{(\eta_k-1)!}, \dots, 1 \right)^T.
\end{equation}
Yet, $\{1, X, \dots, X^{\eta_k}/\eta_k!\}$ is a basis of the vector space of polynomials with degree at most $\eta_k$ which ensures linear independence. The controllability of the system means that for all $x^k, y^k\in \mathbb{R}^{\eta_k+1}$, there exists some sequence of real numbers $(u^k(t))_{t=1,\dots, {\eta^*}+1}$ such that $x^k({\eta^*}+1)=y^k$ where $x^k$ is inductively defined by \eqref{eq:discrete:control:system}. In the following, we use the notation $x(t) = (x^1(t),x^2(t))^T$.
 
Now, let $x$ and $y$ be as in the statement of Lemma \ref{lemma:irreducibility} and denote $x=(x^1,x^2)^T$ and $y=(y^1,y^2)^T$. According to the first step of the proof, let $(u^k(t))_{t=1,\dots, {\eta^*}+1}$ be such that $x^k({\eta^*}+1)=y^k$ and define, for all $t=1,\dots, {\eta^*}+1$, 
\begin{equation}
    \xi^k(t) = \frac{u^k(t) - \Delta c_{k+1}f_k(x^{k,1}(t))}
    {\frac{\sqrt{\Delta}}{\sqrt N} \frac{c_k}{\sqrt{p_{k+1}}} \sqrt{f_k(x^{k,1}(t))}},
\end{equation}
in such a way that
\begin{equation}
    u^k(t) = \Delta c_{k+1} f_k(x^{k,1}(t)) + {\frac{\sqrt{\Delta}}{\sqrt N} \frac{c_k}{\sqrt{p_{k+1}}} \sqrt{f_k(x^{k,1}(t))}}\xi^k(t).
\end{equation}
Substituting $u^k(t+1)$ in \eqref{eq:control:modulus:1} and denoting $\xi_t=\xi(t)$, yields $x^k(t+1) = \psi_\Delta[\xi_{t+1}](x(t))_k$ and thus
\begin{equation}
    y = x(\bar{\eta}+1) = \underbrace{\left(\psi_\Delta[\xi_{\bar{\eta}+1}] \circ \dots \circ \psi_\Delta[\xi_1] \right)}_{\eta^*+1} (x),
\end{equation}
which proves the result.
\end{document}